\documentclass[11pt, dvipsnames]{amsart}
\usepackage[utf8]{inputenc}
\usepackage{tikzscale}
\usepackage{graphicx}
\usepackage{a4wide}
\usepackage{color}
\usepackage{xcolor}
\usepackage{amsmath,amssymb}
\usepackage{float}
\usepackage{soul}
\usepackage{lipsum} 
\usepackage{hyperref}
\usepackage{cleveref}
\usepackage{bm}
\usepackage{enumitem}
\usepackage{float}
\usepackage{amsthm}
\usepackage{pgfplots}
\pgfplotsset{compat=1.18, 
    legend image with text/.style={
        legend image code/.code={            \node[anchor=center] at (0.3cm,0cm) {#1};
        }
    },
}

\usepackage{pgfplotstable}
\usepackage{yhmath} \usepackage{lineno}

\usepackage{comment}

\usepackage{xcolor} 
\definecolor{myblue}{HTML}{007BFF} \definecolor{myred}{HTML}{FF5733}  
\newtheorem{theorem}{Theorem}[section]
\newtheorem{lemma}[theorem]{Lemma}
\newtheorem{example}{Example}
\newtheorem{assumption}{Assumption}

\newtheorem{remark}{Remark}

\usepackage{mathtools}
\usepackage{etoolbox}
\usepackage{bm}
\usepackage{amsmath}      
\usepackage{amssymb}      
\usepackage{algorithm}
\usepackage{algpseudocode}
\usepackage{fancybox}     
\usepackage{caption}
\usepackage{overpic}
\usepackage[foot]{amsaddr}

\numberwithin{example}{section} 

\DeclareFontFamily{U}{matha}{\hyphenchar\font45}
\DeclareFontShape{U}{matha}{m}{n}{
<-6> matha5 <6-7> matha6 <7-8> matha7
<8-9> matha8 <9-10> matha9
<10-12> matha10 <12-> matha12
}{}
\DeclareSymbolFont{matha}{U}{matha}{m}{n}

\DeclareFontFamily{U}{mathx}{\hyphenchar\font45}
\DeclareFontShape{U}{mathx}{m}{n}{
<-6> mathx5 <6-7> mathx6 <7-8> mathx7
<8-9> mathx8 <9-10> mathx9
<10-12> mathx10 <12-> mathx12
}{}
\DeclareSymbolFont{mathx}{U}{mathx}{m}{n}

\DeclareMathDelimiter{\vvvert} {0}{matha}{"7E}{mathx}{"17}
\DeclarePairedDelimiterX{\normiii}[1]
{\vvvert}
{\vvvert}
{\ifblank{#1}{\:\cdot\:}{#1}}

\newcommand{\R}{\mathbb{R}}

\newcommand{\dd}{\mathop{}\!\mathrm{d}}

\newcommand{\fa}{\text{ for all }}

\definecolor{color0}{rgb}{0.7843, 0.7843, 0.7843}
\definecolor{color1}{rgb}{0, 0.4470, 0.7410}
\definecolor{color2}{rgb}{0.8500, 0.3250, 0.0980} \definecolor{color3}{rgb}{0.9290, 0.6940, 0.1250}
\definecolor{color4}{rgb}{0.7060, 0.3840, 0.7650}
\definecolor{color5}{rgb}{0.4660, 0.6740, 0.1880}
\definecolor{color6}{rgb}{0.3010, 0.7450, 0.9330}
\definecolor{color7}{rgb}{0.6350, 0.0780, 0.1840}
\definecolor{color8}{rgb}{0.0, 0.4078, 0.3412}
\usepackage{multicol}
\usepackage{multirow}
\title{The proximal Galerkin method for non-symmetric variational inequalities }
\date{\today}
\author{Guosheng Fu$^1$}
 \address{$^1$ Department of Applied and Computational Mathematics and Statistics (ACMS), University of Notre Dame, Notre Dame, IN 46556}
 \email{gfu@nd.edu}
 \author{Brendan Keith$^2$}
 \author{Dohyun Kim$^2$}
 \address{$^2$ Division of Applied Mathematics, Brown University, Providence, RI 02912}
 \email{brendan\_keith@brown.edu, dohyun\_kim@brown.edu, rami\_masri@brown.edu}
 \author{Rami Masri$^2$}
 \author{Will Pazner$^3$}
  \address{$^3$ Department of Mathematics and Statistics, Portland State University, Portland, OR 97201}

 \email{pazner@pdx.edu}
 \thanks{
 GF was supported in part by NSF DMS-2410740.
 BK, DK, and RM were supported in part by the U.S.\ Department of Energy, Office of Science Early Career Research Program under Award Number DE-SC0024335 and by the Center for Information Geometric Mechanics and Optimization (CIGMO), a PSAAP-IV Focused Investigatory Center funded by the U.S.\ Department of Energy, National Nuclear Security Administration under Award Number DE-NA0004261.
 BK was also supported in part by the Alfred P.\ Sloan Foundation via a Sloan Research Fellowship in Mathematics.
 WP was supported in part by NSF DMS-2136228 and CC*-2346732.}

\begin{document}
\begin{abstract}
We introduce the proximal Galerkin (PG) method for non-symmetric variational inequalities. The proposed approach is asymptotically mesh-independent and yields constraint-preserving approximations. We present both a conforming PG formulation and a hybrid mixed first-order system variant (FOSPG). We establish optimal a priori error estimates for each variant, which are verified numerically. We conclude by applying the method to American option pricing, free boundary problems in porous media, advection–diffusion with a semipermeable boundary, and the enforcement of discrete maximum principles.

\vspace{1em}
 \smallskip
  \noindent \textit{Key words}.
 Variational inequality, proximal Galerkin, finite element method, hybridization, a priori error analysis, pointwise inequality constraint.
  \smallskip

  \noindent \textit{MSC codes.} 35J86, 35R35, 49J40, 65K15, 65N30.
\end{abstract}
\maketitle

\section{Introduction}
The proximal Galerkin (PG) method is a numerical framework for solving variational inequalities (VIs) \cite{keith2023proximal}, combining ideas from Bregman proximal point methods and finite element theory.  The PG framework recently demonstrated competitive computational efficiency for a diverse set of mathematical problems, delivering fast mesh-independent convergence and constraint-preserving approximations \cite{dokken2025latent,papadopoulos2024hierarchical,kim2024simple}. These properties have been recently rigorously established in \cite{aprioriPG} for quadratic energy minimization problems.

In this paper, we extend the PG framework \cite{keith2023proximal,dokken2025latent,aprioriPG} to VIs with non-symmetric bilinear forms (a.k.a.\ non-symmetric VIs) by introducing a generalized formulation that retains the aforementioned desirable properties of the original method.   Non-symmetric VIs are used to model a variety of systems, including flow through porous media \cite{Baiocchi73a}, semi-permeable membranes \cite{duvaut1976inequalities}, large ice sheets modeling \cite{jouvet2012steady}, and pricing American options in quantitative finance \cite{heston1993closed}.

We note that an alternative approach to non-symmetric VIs was proposed in \cite[Section 5.2]{keith2023proximal}.
This approach can be formally derived using a Bregman divergence to regularize a well-known fixed-point operator that converges to the solution of the underlying VI.
Although more general in theory, the simplest and most practical setting reduces to splitting the symmetric and non-symmetric components of the underlying bilinear form, treating the symmetric part implicitly and the non-symmetric part explicitly in each proximal subproblem; cf.\ Algorithm 4 in \cite{keith2023proximal} with $\rho = 1/\epsilon$.
Unfortunately, this entire class of approaches is generally unstable for large step sizes due to the explicit part of the bilinear form in each subproblem, thereby limiting the overall convergence rate.
Instead, in this work, we adopt a simpler, more efficient approach by abandoning generalized operator splitting and treating all contributions to the bilinear form implicitly.

Further elements of the literature focus primarily on symmetric VIs, which are associated with minimization problems; see \cite[Section 3]{aprioriPG}, \cite{karkkainen2003augmented}, and \cite{gustafsson2017finite} for detailed reviews. 
Here, we highlight two popular approaches. The first is the quadratic penalty method, which relaxes constraints but suffers from mesh-dependent ill-conditioning as penalty parameters must scale inversely with the mesh size to maintain accuracy \cite{scholz1984numerical}. Alternatively, one can discretize the VI directly \cite{brezzi1977error,brezzi1978error} and then apply techniques from nonlinear programming, such as the primal-dual active set \cite{hintermuller2002primal} and the augmented Lagrangian method \cite{glowinski1989augmented} to solve the resulting discrete VI. Unfortunately, the latter class of approaches also tends to exhibit mesh-dependence; i.e., the number of nonlinear solves, not just the cost per linear solve, increases with mesh refinement \cite{bueler2024full}. This shortcoming can be mitigated by multigrid methods, though theoretical guarantees are lacking \cite{graser2009multigrid}.

At the discrete level, the non-symmetric matrix arising from the non-adjoint operator prevents the application of well-established quadratic programming solvers.
Thus, in the context of parabolic VIs, operator-splitting methods that only involve the symmetric part of the operator have been proposed \cite{mezzan2025lagrangian, ikonen2004operator, ikonen2009operator}. Here, for the PG framework, we do not explore such splitting approaches as our focus is on the steady state problem. However, we note that as demonstrated in \Cref{subsec:american_option} and \Cref{example:option_pricing}, the combination of PG with backward Euler in time is stable and efficient.

\subsection{Main contributions}
\begin{itemize}
    \item We extend the PG framework from energy minimization problems to the class of non-symmetric VIs.
    
    \item We provide a general framework for conforming discretizations.
    In \Cref{thm:existence_conforming}, we prove that the discrete subproblems are well posed under certain compatibility conditions of the Galerkin subspaces. A stability result for the discrete variables is also established, see \Cref{lemma:stability}.
         \Cref{thm:best_approximation} and \Cref{thm:error_rate_conforming} provide the best approximation result and error rates, respectively.
    \item We introduce hybridized first-order system PG (FOSPG) methods for obstacle-type advection-diffusion VIs. This spatial discretization is favorable over the conforming method in advection-dominated regimes. We prove its well-posedness and error rates in \Cref{thm:FOSPG_existence} and \Cref{thm:FOSPG_err_rate}, respectively.
    The FOSPG method is also extended to semi-permeable boundary conditions (which includes the Signorini problem) in \Cref{sec:semi_permeable}.
\end{itemize}
\subsection{Outline} We end this section by introducing the basic notation used throughout the paper. \Cref{sec:non_symmetricVIs} introduces the general model problem that we study and presents four applications:  option pricing, semi-permeable boundary conditions, free boundary problems in porous media, and advection-diffusion problems. In \Cref{sec:conforming_PG}, we present the conforming PG method for non-symmetric VIs and provide a detailed stability and error analysis. The hybridizable first-order system PG (FOSPG) method is introduced and analyzed in \Cref{sec:FOSPG}. We present numerical experiments in \Cref{sec:numerical_experiments}, which verify our theoretical findings and illustrate the performance of our methods.
 \subsection{Notation}
In this article, $\Omega$ denotes an open bounded Lipschitz domain in $\mathbb{R}^n$ $(n=1,2,3)$. The dual space of a Banach space $V$ is denoted by $V'$ with duality pairing $\langle \cdot, \cdot \rangle$. We use the standard notation for the Sobolev--Hilbert spaces $H^m(\Omega)$. For non-integer $s$, $H^s(\Omega)$  denotes the Sobolev--Slobodeckij spaces \cite[Chapter 2]{ern2017finite}. The notation $(\cdot, \cdot)_{\omega}$ denotes the $L^2(\omega)$-inner product over a measurable set $\omega \subset \overline{\Omega}$. The trace of $v\in H^1(\Omega)$ on a part of the boundary $\Gamma \subset \partial \Omega$ is denoted by $\operatorname{tr} v$.  If $\omega = \Omega$, we drop the subscript and denote the $L^2$-inner product over $\Omega$ by $(\cdot,\cdot)$. For an extended real valued function $f\colon\R^n \rightarrow \R \cup \{+\infty\}$, we denote by $\operatorname{dom} f : = \{ x \in \R^n : f(x) < \infty\}$ the essential domain of $f$. For a linear continuous operator $B \in \mathcal{L}(U,V)$ where $U,V$ are normed vector spaces, the topological transpose (adjoint) operator $B' \in \mathcal{L}(V',U')$ is defined as
\begin{equation}
\label{eq:DualOperator}
\langle B' v' , u \rangle = \langle v' , B u \rangle  \text{ for all } u \in U, v'  \in V'.
\end{equation}

We consider a conforming simplicial shape regular partition  $\mathcal{T}_h$  of $\Omega$ into elements $T$. Denote by  $\Gamma_h$ the set of facets $F$ (edges in 2D/faces in 3D) of the partition $\mathcal{T}_h$, and
denote by $\partial\mathcal{T}_h$ the set of all element boundaries $\partial T$ with outward unit normal $\bm{n}$. Further, we denote by $\Gamma_h^0$ the set of interior facets and by  $\Gamma_h^\partial$ the set of boundary facets. 
  We denote by $\mathbb P_p( T)$ (resp. $\mathbb P_p( F)$) the space
of polynomials of degree at most $p$ on $T$ (resp. $ F$).
We also use the Raviart--Thomas element \cite{RaviartThomas77} of degree $p$ on $T$, denoted by
$\mathrm{RT}_p(T):=[\mathbb P_p(T)]^n+  \mathbf{x}\cdot\mathbb{P}_{p}(T)$.
The space $H^1(\mathcal{T}_h)$ denotes the broken $H^1$ space corresponding to the mesh $\mathcal{T}_h$:
\[H^1(\mathcal{T}_h) := \{ u \in L^2(\Omega): \;\; u_{\vert_{T}}  \in H^1(T), \;\; \forall T \in \mathcal{T}_h \}. \] The broken gradient and divergence are denoted by $\nabla_h$ and $\nabla_h \cdot$ respectively, meaning that $(\nabla_h v)_{\vert_{T}} = \nabla (v_{\vert_T})$ and  $(\nabla_h \cdot v )_{\vert_T} =\nabla  \cdot (v_{\vert_T}) $ for $v \in H^1(\mathcal{T}_h)$. Further for all $q, \varphi \in L^2(\partial \mathcal{T}_h)$, we use the notation
\begin{equation}
(q,\varphi)_{\partial \mathcal{T}_h} = \sum_{T \in \mathcal{T}_h} \int_{\partial T} q\varphi \mathrm{d }s.
\end{equation}
We will often use the notation $A \lesssim B$ to indicate that there is a positive constant $C$ independent of $h$, the iteration count $k$, and the proximity parameters $\{\alpha_k\}$ such that $A \leq C \, B$.
\section{Non-symmetric variational inequalities}\label{sec:non_symmetricVIs}
This section introduces the abstract setup and provides four examples. We consider a Hilbert space $V$, a linear operator $\mathcal{L}: V \rightarrow V'$, and a linear functional $F \in V'$. Given a closed and convex set $K \subset V$, we are interested in the following variational inequality problem: Find $u \in K$ such that
\begin{align} \label{eq:general_VI}
\langle \mathcal{L} u, v - u \rangle \geq F(v-u)  ~\fa  v \in K.
\end{align}
The operator $\mathcal{L}$ need not be symmetric.
In addition, this operator gives rise to the bilinear form $\mathcal{A}:V \times V \rightarrow \mathbb{R}$ defined by $\mathcal{A}(u,v) := \langle \mathcal{L} u,v \rangle$. We define the symmetric and non-symmetric components  of $\mathcal{A}$:
\begin{align}
\label{eq:Splitting}
\mathcal{A}_0 (u,v) & = \frac{1}{2} (\mathcal{A}(u,v) + \mathcal{A}(v,u) ),  \;\;  \mathcal{A}_n (u,v) = \frac{1}{2} (\mathcal{A}(u,v) - \mathcal{A}(v,u) ).
\end{align}
We assume that $\mathcal{L}$ is sectorial; i.e., the skew-symmetric part is continuous in the sense that
\begin{align} \label{eq:skew_symm_part}
    \mathcal{A}_n(u,v) \leq c_1 \|u\|_V \|v\|_V ~\fa u,v \in V,
\end{align}
for a non-negative constant $c_1$.
We further assume that $\mathcal{A}$ is coercive and continuous:
\begin{alignat}{2}\label{eq:coercivity}
\mathcal{A}_0(u,u)=\mathcal{A}(u,u)&\geq C_{\mathrm{coerc}} \|u\|^2_V &&~\fa u \in V,\\
\mathcal{A}_0(u,v)&\leq C_{\mathrm{bnd}}\|u\|_V\|v\|_V&&~\fa u,v\in V, \label{eq:continuity}
\end{alignat}
where  $C_{\mathrm{coerc}}$ and $C_{\mathrm{bnd}}$ are positive constants.
\begin{lemma}
Assume that the properties given by \eqref{eq:skew_symm_part}, \eqref{eq:coercivity}, and \eqref{eq:continuity} hold. Then, there exists a unique solution $u^* \in V$ to \eqref{eq:general_VI}.
\end{lemma}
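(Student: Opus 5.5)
This is the Lions--Stampacchia theorem. I will prove it via Banach's fixed-point theorem applied to a projection map. Implicitly $K$ is assumed nonempty, closed and convex, and $u^*$ is sought in $K$.

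First, the assumptions imply that $\mathcal{A}$ is bounded on $V\times V$. Since $\mathcal{A}=\mathcal{A}_0+\mathcal{A}_n$, the bounds \eqref{eq:continuity} and \eqref{eq:skew_symm_part} give
\begin{equation*}
|\mathcal{A}(u,v)|\le (C_{\mathrm{bnd}}+c_1)\|u\|_V\|v\|_V .
\end{equation*}
We may use absolute values here because $\mathcal{A}_0$ and $\mathcal{A}_n$ are bilinear, so the one-sided bounds also apply to $-v$. Set $M:=C_{\mathrm{bnd}}+c_1$. Coercivity is given by \eqref{eq:coercivity} with constant $C_{\mathrm{coerc}}$.

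Next I recast the VI as a fixed-point problem. By the Riesz representation theorem there exist $f\in V$ with $(f,v)_V=F(v)$, and a bounded linear $A:V\to V$ with $(Au,v)_V=\mathcal{A}(u,v)$. Moreover $\|A\|\le M$ and $(Au,u)_V\ge C_{\mathrm{coerc}}\|u\|_V^2$. Let $P_K$ denote the metric projection onto $K$. It is well defined and nonexpansive because $K$ is closed and convex in a Hilbert space. For $\rho>0$, the projection characterization
\begin{equation*}
w=P_K z \iff w\in K,\ (z-w,v-w)_V\le 0 \ \ \forall v\in K
\end{equation*}
shows that $u$ solves \eqref{eq:general_VI} if and only if $u=T_\rho u$, where $T_\rho u:=P_K(u-\rho(Au-f))$. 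Indeed, taking $z=u-\rho(Au-f)$ and $w=u$, the condition reads $-\rho(Au-f,v-u)_V\le 0$, which is exactly the VI.

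The main step is to show that $T_\rho$ is a contraction. By nonexpansiveness of $P_K$,
\begin{equation*}
\|T_\rho u-T_\rho w\|_V^2\le \|(u-w)-\rho A(u-w)\|_V^2\le (1-2\rho C_{\mathrm{coerc}}+\rho^2M^2)\|u-w\|_V^2 .
\end{equation*}
For $0<\rho<2C_{\mathrm{coerc}}/M^2$ the factor is strictly below $1$. Banach's theorem on the closed set $K$ then yields existence and uniqueness. Uniqueness can also be checked directly: test the VI for $u_1$ with $v=u_2$ and the VI for $u_2$ with $v=u_1$, then add. This gives $\mathcal{A}(u_1-u_2,u_1-u_2)\le 0$, and coercivity forces $u_1=u_2$.

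I expect no real obstacle. The only point needing care is that the sectoriality bound \eqref{eq:skew_symm_part} is exactly what supplies continuity of the full, non-symmetric $\mathcal{A}$, which is needed for the contraction estimate.
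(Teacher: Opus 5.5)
Your proof is correct and complete, but it takes a different route from the one the paper relies on. The paper only cites Kinderlehrer--Stampacchia. The argument there, which the paper adapts in the proof of \Cref{thm:existence_conforming}, is a continuation in the skew part. One first solves the symmetric problem $\mathcal{A}_0(u,v-u)\ge G(v-u)$ for every $G\in V'$; this is a projection onto $K$ in the equivalent inner product $\mathcal{A}_0$, or equivalently the minimization of $\tfrac12\mathcal{A}_0(v,v)-G(v)$ over $K$. One then passes from $\mathcal{A}_\tau=\mathcal{A}_0+\tau\mathcal{A}_n$ to $\mathcal{A}_t$ by moving $(t-\tau)\mathcal{A}_n(q,\cdot)$ to the right-hand side. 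The resulting map $q\mapsto u$ is a contraction with constant $c_1|t-\tau|/C_{\mathrm{coerc}}$, so finitely many steps of length $t_0<C_{\mathrm{coerc}}/c_1$ reach $t=1$. Your one-shot map $T_\rho u=P_K(u-\rho(Au-f))$ is shorter and constructive, since the projected iteration converges geometrically. The price is that it uses the metric projection onto $K$ and the Lipschitz constant of the whole form, $M=C_{\mathrm{bnd}}+c_1$, through the step restriction $\rho<2C_{\mathrm{coerc}}/M^2$. The continuation argument needs only coercivity, the skew bound $c_1$, and solvability of the symmetric problem as a black box. That is why it transfers to the nonlinear discrete PG subproblems of \Cref{thm:existence_conforming}: there is no projection onto $K$ there, but the symmetric case is supplied by the saddle-point analysis of \cite{aprioriPG}.
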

\begin{proof}
We refer to \cite[Section 2 of Chapter 2]{kinderlehrer2000introduction}.
\end{proof}
Hereinafter, we consider feasible sets $K$ that have the following general form:
\begin{equation}
K = \{v \in V \mid  B v(x) \in C(x)  \text{ for almost every } x \in \Omega_d \subset \overline \Omega \},  \label{eq:constraint_set_general}
\end{equation}
where  $\Omega_d$ is a Hausdorff-measurable set with  dimension $d \leq n$ and  measure $\dd \mathcal{H}_d$. We assume that $B\colon V \rightarrow Q$ is a bounded linear map, whose image $Q = \operatorname{im} B$ is continuously and densely embedded in $L^2(\Omega_d;\mathbb{R}^m)$, and $C(x) \subset \R^m$, which may vary with $x$, is a closed convex set with a nonempty interior. The set of constrained observables defined on $\Omega_d$ is denoted by
 \begin{equation}
    \mathcal{O}
    =
    \{ o \in L^2(\Omega_d;\mathbb{R}^m) \mid o(x) \in C(x) \text{ for almost every } x \in \Omega_d \subset \overline \Omega \}
                    .
 \label{eq:ConstrainedObservables}
 \end{equation}
Finally, we introduce the dual variable $\lambda^*\in Q'$.
\begin{lemma}
 Given $u^* \in V$ solving \eqref{eq:general_VI}, there exists a unique dual variable $\lambda^* \in Q'$ satisfying
\begin{equation} \label{eq:def_lambda*}
    \langle B' \lambda^*, v \rangle =\langle \mathcal{L} u^*  , v  \rangle  - F(v) ~\fa v \in V.
\end{equation}
\end{lemma}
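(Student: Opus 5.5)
The plan is to show that the residual $r := \mathcal{L}u^* - F \in V'$ vanishes on $\ker B$, and then to factor it through $B$. The constraint in \eqref{eq:constraint_set_general} depends on $v$ only through $Bv$, and this is what makes the argument work.

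\emph{Step 1 (annihilation of the kernel).} Let $w \in \ker B$ and $t \in \mathbb{R}$. Then $B(u^* + t w) = Bu^*$, which lies in $C(x)$ for a.e.\ $x \in \Omega_d$. Hence $u^* + t w \in K$. Testing \eqref{eq:general_VI} with $v = u^* + t w$ gives $t\, r(w) \geq 0$ for every $t \in \mathbb{R}$. Taking $t = \pm 1$ yields $r(w) = 0$. Since $B$ is bounded, $\ker B$ is closed, and we have shown $r \in (\ker B)^\perp$.

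\emph{Step 2 (definition of $\lambda^*$).} For $q \in Q = \operatorname{im} B$, choose any $v \in V$ with $Bv = q$ and set $\lambda^*(q) := r(v)$. By Step 1 this does not depend on the choice of preimage, since two preimages differ by an element of $\ker B$. The map $\lambda^*$ is clearly linear, and by construction $\langle \lambda^*, Bv\rangle = r(v)$ for all $v \in V$. This is exactly $\langle B'\lambda^*, v\rangle = \langle \mathcal{L}u^*, v\rangle - F(v)$ in the sense of \eqref{eq:DualOperator}.

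\emph{Step 3 (continuity on $Q$, the main obstacle).} We must show $\lambda^* \in Q'$. This requires a bounded right inverse of $B$, i.e., a constant $c$ such that every $q \in Q$ has a preimage $v$ with $\|v\|_V \le c\|q\|_Q$. If $Q$ carries the quotient (range) norm $\|q\|_Q = \inf\{\|v\|_V : Bv = q\}$, this holds with $c = 1$. In that case, taking the infimum over preimages gives $|\lambda^*(q)| \le \|r\|_{V'}\|q\|_Q$.

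If instead $Q$ is equipped with some other Banach norm for which $B \colon V \to Q$ is bounded and onto, I would proceed as follows. The induced map $\tilde B \colon V/\ker B \to Q$ is a continuous bijection between Banach spaces. By the open mapping theorem its inverse is bounded, which again gives the estimate $|\lambda^*(q)| \lesssim \|r\|_{V'}\|q\|_Q$. This is the only point where the functional-analytic structure of $Q$ enters. I would state explicitly that $Q$ is understood as $\operatorname{im} B$ with the range norm, or more generally with any complete norm making $B$ bounded and surjective.

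\emph{Step 4 (uniqueness).} Suppose $\lambda_1, \lambda_2 \in Q'$ both satisfy \eqref{eq:def_lambda*}. Then $\langle \lambda_1 - \lambda_2, Bv\rangle = 0$ for all $v \in V$. Since $B$ maps onto $Q$, it follows that $\lambda_1 = \lambda_2$. Equivalently, $B'$ is injective because $B$ is surjective.
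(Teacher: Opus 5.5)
Your proof is correct, but it takes a genuinely different route from the paper.

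The paper never looks at $\ker B$. It freezes the skew-symmetric part into the load, $G(v) := F(v) - \mathcal{A}_n(u^*,v)$, and observes that $u^*$ then solves the symmetric VI $\mathcal{A}_0(u,v-u) \geq G(v-u)$ for all $v \in K$; that VI has a unique solution by coercivity of $\mathcal{A}_0$. It then imports the multiplier $\lambda \in Q'$ for this symmetric problem from \cite[Theorem 3.11, Remark 3.10]{HASLINGER1996313}. Since $\mathcal{A}_0(u^*,v) - G(v) = \langle \mathcal{L}u^*, v\rangle - F(v)$, this $\lambda$ satisfies \eqref{eq:def_lambda*}.

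Your argument is instead self-contained. The observation that $K = \{v \in V : Bv \in \mathcal{O}\}$ is invariant under translations by $\ker B$ forces the residual $\mathcal{L}u^* - F$ to annihilate $\ker B$. The multiplier is then just the factorization of the residual through $B$. This buys three things:
\begin{itemize}
\item It uses no coercivity, symmetry, or continuity of $\mathcal{A}$, only that $u^*$ solves the VI.
\item It proves uniqueness of $\lambda^*$ explicitly. The paper leaves this tacit, and it rests on exactly your Step~4.
\item It isolates the one structural hypothesis that is really needed: $Q = \operatorname{im} B$ must carry the range norm or an equivalent complete norm, so that $B$ has a bounded right inverse.
\end{itemize}
The paper leaves that last hypothesis implicit, buried in the assumptions of the cited theorem, even though it matters. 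With, say, the $L^2(\Omega_d;\mathbb{R}^m)$-norm on $Q$, the assertion $\lambda^* \in Q'$ would become an $L^2$-regularity claim that fails in general.

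What the paper's route buys in return is brevity and modularity. It shows that any multiplier theorem available in the symmetric, energy-minimization setting transfers verbatim to the non-symmetric VI.
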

\begin{proof}
    Since $\mathcal{A}_0$ is coercive, the variational inequality
    \begin{equation*}
        \mathcal{A}_0(u,v-u)\geq G(v-u)\text{ for all }v\in K,
    \end{equation*}
    has a unique solution $u$ with an associated dual variable $\lambda\in Q'$ \cite[Theorem 3.11, Remark 3.10]{HASLINGER1996313} for any $G \in V'$.
    This dual variable satisfies
    \begin{equation*}
        \langle B'\lambda,v\rangle=\mathcal{A}_0(u,v)-G(v)\text{ for all }v\in V.
    \end{equation*}
    To conclude the result, we set
    \begin{equation*}
        G(v)=F(v)-\mathcal{A}_n(u^*,v).
        \qedhere
    \end{equation*}
\end{proof}
\noindent
It readily follows that
\begin{equation}
    \label{eq:lam-normal-cone}
    \langle B'\lambda^*,v-u^*\rangle = \langle \mathcal{L}u^*,v-u^*\rangle-F(v-u^*)\geq 0\text{ for all }v\in K.
\end{equation}

We now conclude this section by providing four examples that illustrate the general setup of this paper.
\begin{example}[Advection-diffusion problems with bound constraints] \label{example:obstacle_type} We consider the following operator $\mathcal{L}: H^1_0(\Omega) \rightarrow H^{-1}(\Omega)$:
\begin{align} \label{eq:L_convection_diffusion}
\mathcal{L} u = - \nabla \cdot (  \kappa \nabla u) + \beta \cdot \nabla  u + c u,
\end{align}
where $\kappa \in L^{\infty}(\Omega)$ is uniformly bounded below by a positive real number, $c \in L^{\infty}(\Omega)$, and
$\beta \in [L^{\infty}(\Omega)]^d$ with $\nabla\cdot\beta \in  L^{\infty}(\Omega)$. For \eqref{eq:skew_symm_part} and \eqref{eq:coercivity} to hold, it suffices to assume that \cite[Section 4.6.1]{di2011mathematical}
\[
c - \frac12 \nabla \cdot \beta \geq  0 ~\text{ a.e. in }  \Omega.
\]
For the set $K$, we write
\begin{equation} \label{eq:bilateral_K}
K = \{v \in H^1_g(\Omega) \mid \phi_1 \leq  v \leq  \phi_2 \text{ a.e.\ in  }  \Omega \},
\end{equation}
where $\phi_1, \phi_2 \in H^1(\Omega) \cap C(\overline \Omega)$ with $\phi_1 \leq \phi_2$ a.e. in $\Omega$ and $\phi_1 \leq g \leq \phi_2$ on $\partial \Omega$.
In \eqref{eq:constraint_set_general},  we take $B$ to be the identity operator, $C(x) = [\phi_1(x) , \phi_2(x)] $ and $\Omega_d = \Omega$ to recover \eqref{eq:bilateral_K}.

\end{example}
\begin{remark}[Discrete Maximum Principle]
Consider the following advection-diffusion-reaction equation:
\begin{alignat*}{2}
 - \nabla \cdot (  \kappa \nabla u) + \beta \cdot \nabla  u + c u&=0&&\text{ in }\Omega,\\
 u&=g&&\text{ on }\partial\Omega.
\end{alignat*}
The solution $u$ satisfies the maximum principle, i.e., $u\in K$ given in \Cref{example:obstacle_type} with $\phi_1=\operatorname{ess\,inf}g$ and $\phi_2=\operatorname{ess\,sup}g$.
While these constraints are theoretically redundant at the continuous level due to the continuous maximum principle, we can exploit this property numerically. Standard Galerkin methods often produce spurious oscillations for advection-dominated flows; formulating the problem as a VI (see \Cref{example:obstacle_type}) explicitly enforces the discrete maximum principle, ensuring physically meaningful numerical solutions. Refer to \Cref{sec:hemker} for a numerical example.
\end{remark}

\begin{example}[American option pricing] \label{example:option_pricing}
Parabolic VIs can model an asset price and the optimal time to exercise an option; we refer to \cite[Section I]{moon2007posteriori} for more details. These VIs take the following form: Find $u \in C([0,T]; L^2(\Omega)) \cap L^2(0,T;H^1(\Omega))$ such that $u(t) \in \mathcal{K}(t)$ for a.e. $t \in [0,T]$ and
\begin{subequations}
    \begin{equation} \label{eq:parabolic_VI}
\langle \partial_tu , v - u  \rangle + \langle \mathcal{L} u, v - u \rangle \geq (f, v -u) ~\fa  v\in \mathcal{K}(t) \;\; \text{for a.e.} \;\;  t \in [0,T].
\end{equation}
Here, $\mathcal{L}$ is given by \eqref{eq:L_convection_diffusion} and  $\mathcal{K}(t)$ is the constraint set defined by
\begin{equation}
    \mathcal{K}(t) =  \{v \in H^1_0(\Omega) \mid v \geq \phi(t) \text{ a.e.\ in  }  \Omega \},
\end{equation}
\end{subequations}
where $\phi(t)$ is a time-dependent obstacle, representing the payoff function (initial asset price). 
\end{example}

\begin{example}[Semi-permeable boundary conditions] \label{example:semi_permeable}We consider a non-overlapping partition of the boundary $\partial \Omega$ into $\Gamma_{\mathsf S} , \Gamma_{\mathsf N}$, and $\Gamma_{\mathsf D}$, where $\Gamma_\mathsf{D}$ has non trivial measure.
Consider the following system
\begin{equation} \label{eq:semi_perm_0}
\begin{alignedat}{2}
    - \nabla \cdot ( \kappa  \nabla u) + \beta \cdot \nabla u & = f && \text{ in }  \Omega, \\
    u & = 0 && \text{ on } \Gamma_{\mathsf{D}},  \\
\kappa \nabla u \cdot n - \beta u \cdot n& = g &&  \text{ on } \Gamma_{\mathsf{N}}, \\
\end{alignedat}
\end{equation}
Further, we have the following conditions modeling $\Gamma_S$ as semi-permeable
\begin{equation} \label{eq:semi_perm_1}
u \geq \phi, \;\;  \frac{\partial u }{ \partial n} \geq 0, \; \; (u-\phi) \frac{\partial u }{ \partial n} = 0 \text{ on } \Gamma_{\mathsf{S}}.
\end{equation}
This means that $\Gamma_{\mathsf{S}}$ is impermeable until $u$ reaches a certain threshold $\phi$. Whenever $u=\phi$, $\Gamma_S$ becomes fully permeable. Considering the constraint set
\begin{equation}
   \label{eq:constraint_semi_perm}
   K = \{ v \in H^1_{\mathsf{D}}(\Omega) \mid  \operatorname{tr} v \geq \phi \text{ a.e. on } \Gamma_{\mathsf{S}} \},
\end{equation}
the above model \eqref{eq:semi_perm_0}-\eqref{eq:semi_perm_1} can be formulated as a VI: find $u \in K$ such that
\begin{equation}
(\kappa \nabla u , \nabla (v-u)) + (u, \beta \cdot \nabla (v - u) ) \geq (f, v-u) + \langle g, v - u \rangle_{H^{-1/2}(\Gamma_\mathsf{N})} ~\fa v \in K.
\end{equation}
Considering the general form \eqref{eq:constraint_set_general}, we set $V = H^1_{\mathsf{D}}(\Omega)$, $B$ the trace operator, $C(x) = [\phi(x), \infty)$ and $\Omega_d = \Gamma_{\mathsf{S}}$ to recover \eqref{eq:constraint_semi_perm}. The space $Q = \operatorname{im} B$ is the Lions--Magenes space  $\widetilde{H}^{1/2}(\Gamma_{\mathsf{S}}) := H^{1/2}_{00}(\Gamma_{\mathsf{S}})
 = \{w \in H^{1/2}(\Gamma_\mathsf{S}) \mid \tilde w \in H^{1/2}(\partial \Omega) \},$ where $\tilde w = 0 $ on $\partial \Omega \backslash \Gamma_{\mathsf{S}}$ and $\tilde w = w$ on $\Gamma_{\mathsf{S}}$.
\end{example}

\begin{example}[Free boundary problem in porous medium flow]\label{example:dam}
A classical dam problem \cite{Baiocchi72} models steady seepage of an incompressible fluid through a porous medium, where the saturated region is unknown a priori. Using the Baiocchi transformation \cite{Baiocchi72}, this free boundary problem can be reformulated as a VI posed on a fixed domain.

For domains with vertical walls, the resulting VI is symmetric and admits a convex minimization formulation. When the geometry includes a sloping wall, however, the governing equations induce an oblique derivative boundary condition \cite{Baiocchi73a,Comincioli74}, leading to a nonsymmetric VI. \Cref{fig:dam} illustrates the setup and introduces the notation used in the description below.
\begin{figure}[ht!]
    \begin{overpic}[scale=0.3,unit=0.6mm]{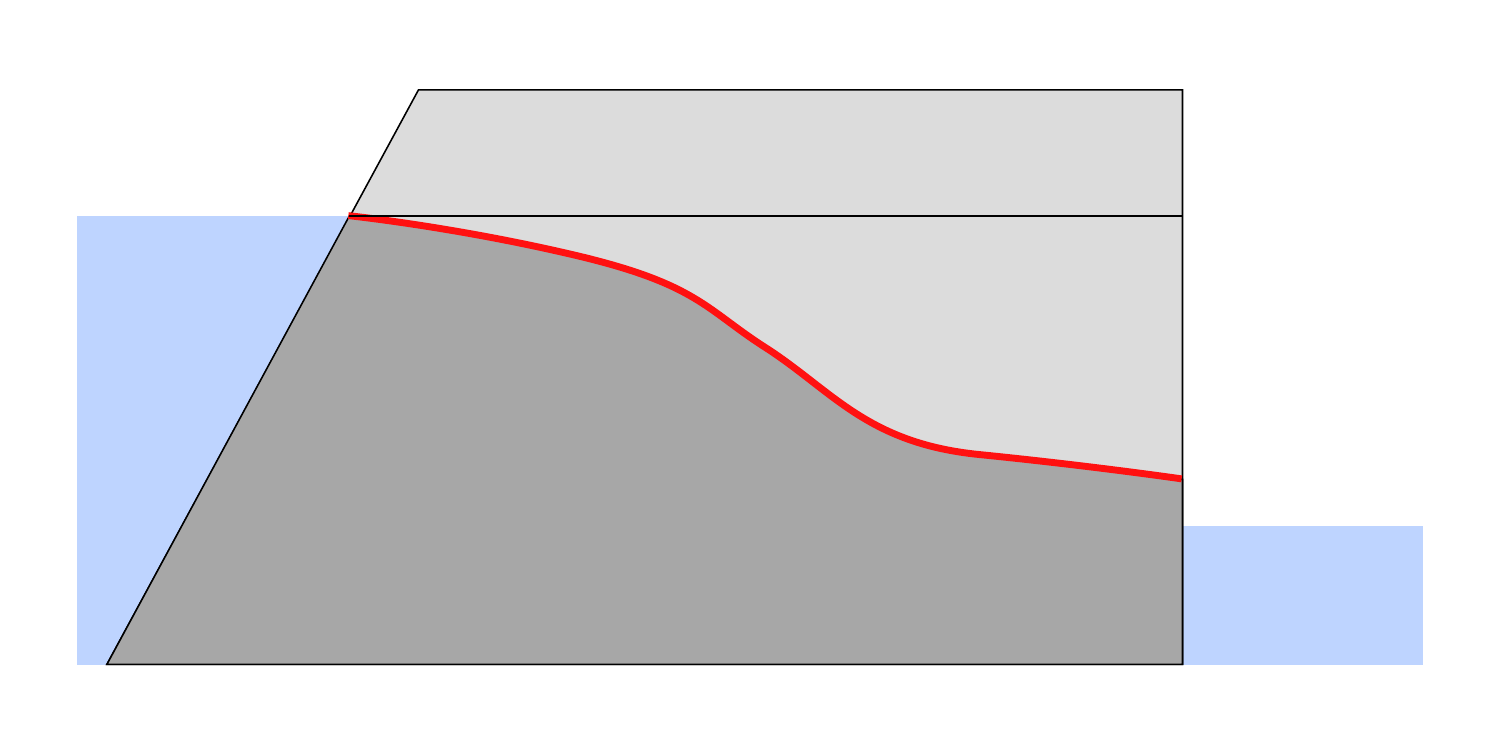}
        \put(5,2){\scriptsize $O=(0,0)$}
        \put(76,2){\scriptsize $A=(a_r,0)$}
        \put(55,13.5){\scriptsize $(a_r,h_r)=B_r$}
        \put(80,18){\scriptsize $B_\varphi=(a_r, \varphi(a_r))$}
        \put(80,35){\scriptsize $B=(a_r,h_l)$}
        \put(3,37){\scriptsize $(a_l,h_l)=C$}
        \put(50,31){\scriptsize $\color{red}{\varphi(x)}$}
    \end{overpic}
    \caption{
Two--dimensional dam with a left sloping wall.
The extended domain $\Omega=OABC$ contains the unknown saturated region, whose free surface $\varphi(x)$ satisfies $h_l\ge \varphi(x)\ge h_r$.
}
\label{fig:dam}
\end{figure}

For given discharge $q>0$, the Baiocchi potential $u \in K(q)$ satisfies the non-symmetric VI of the form \eqref{eq:general_VI} with
\begin{subequations}
\label{eq:dam}
\begin{align}
\mathcal{A}(u,v)
  &= \int_\Omega \!\left(
      \nabla u\cdot\nabla v
      + \frac{h_l}{a_l}\,(u_x v_y - v_x u_y)
    \right)\dd x, \label{dam-a} \\
\label{dam-f}
F(v)
  &= -\int_\Omega v\dd x
     - \frac{\sqrt{a_l^2+h_l^2}}{a_l}
       \int_{\Gamma_N} (y-h_l)\,v\dd s, \\
       K(q)
  & = \{ v \in H^1(\Omega) \mid v|_{\Gamma_D}=g_q,\; v \ge 0 \text{ a.e. in } \Omega \}.
\end{align}
\end{subequations}
Here $\Gamma_\mathsf{N}$ ($\overline{OC}$) denotes the sloping wall and $\Gamma_\mathsf{D}=\partial\Omega\setminus\Gamma_\mathsf{N}$.
The Dirichlet boundary data $g_q$ depend on the (a priori unknown) discharge $q>0$ and are prescribed as
\begin{align}
\label{eq:dam-bc}
g_q =
\begin{cases}
  -q(x-a_r) + \tfrac{1}{2}h_r^2, & \text{on } \overline{OA}, \\[4pt]
  \tfrac{1}{2}(h_r - y)^2,       & \text{on } \overline{AB_r}, \\[4pt]
  0,                             & \text{on } \overline{B_rB}\cup \overline{BC}.
\end{cases}
\end{align}
The discharge $q$ is determined by enforcing a compatibility condition at the junction of Dirichlet and oblique boundaries. In practice, this typically leads to a sequence of non--symmetric VIs, one for each trial value of $q$ \cite{Baiocchi73b,OdenKikuchi80}. See \Cref{sec:dam} for the details.
\end{example}

\section{The conforming proximal Galerkin method for non-symmetric VIs}\label{sec:conforming_PG}
We now introduce the necessary tools from convex analysis to present the method.
\subsection{Legendre functions}  The proximal Galerkin method relies on a suitably chosen Legendre function that encodes the geometry of the set of observables $\mathcal{O}$. In this work, it suffices to note that a function $L: \R^m \rightarrow \R \cup \{+\infty\}$ is called a Legendre function if it is proper with $\operatorname{int} (\operatorname{dom} L) \neq \emptyset $, strictly convex and differentiable on $\operatorname{int} (\operatorname{dom} L)$ with a singular gradient on the boundary of  $\operatorname{dom} L$. We first consider superposition operators of Legendre functions. That is, we define  $$\mathcal{R}(w)(x) = R(x,w(x)), \quad x \in \Omega_d, \,  w \in L^2(\Omega_d),$$
where $R: \Omega_d \times \R^m  \rightarrow \R \cup \{+ \infty\}$ is a  Carath\'eodory function such that $ R(x, \cdot)$ is a Legendre function with $\operatorname{dom}(R(x,\cdot)) = C(x)$ for almost every $x \in \Omega_d$. The PG method relies on the key observation \cite{rockafellar1967conjugates} that
\begin{equation} \label{eq:conj_grad_inverse}
 \nabla \mathcal{R}^*  = (\nabla \mathcal{R})^{-1},
\end{equation}
where $\mathcal{R}^*$ is the convex conjugate of $\mathcal{R}$
\begin{equation}
     \mathcal{R}^*(\psi)(x)
    =
    R^*(x,\psi(x))
    \,, \;\;\;  R^*(x,z)
    = \sup_{y \in \mathbb{R}}  \big\{ zy - R(x,y) \big\},
    \, \; \;
\end{equation}
and  the gradients of $\mathcal{R}$ and $\mathcal{R}^*$ are given by
$$\nabla \mathcal{R}(u)(x) = \partial_u R(x,u(x))\text{ and }\nabla \mathcal{R}^*(u)(x) = \partial_u R^*(x,u(x)).$$
We assume the supercoercivity of $R(x, \cdot)$; i.e., $R(x, y)/|y| \rightarrow \infty$ as $|y| \rightarrow \infty$ for a.e.\ $x \in \Omega$. This establishes that $R^*(x, \cdot)$ is well defined and continuously differentiable over all of $\mathbb R^m$. Along with \eqref{eq:conj_grad_inverse} and the singularity of $\partial_u R(x, \cdot)$ on $\partial C(x)$, we conclude that $\nabla \mathcal{R}^*$ is well defined and continuous over $L^\infty(\Omega_d;\R^m)$ and
\begin{equation}\label{eq:const-prsv-img}
\nabla \mathcal R^* (\psi)(x) \in \operatorname{int} C(x) \text{ f.a.e. }   x \in \Omega_d  ~\ \fa \psi \in L^\infty(\Omega_d,; \mathbb R^m) .
\end{equation}
There are many examples for the choice of $\mathcal{R}$ corresponding to a given convex set $K$. We refer to  \cite[Table 1]{dokken2025latent} for a brief list.
We note that for all examples considered in this work, a suitable choice is
\begin{equation*}
    \mathcal{R}(u)=(u-\phi)\log(u-\phi)-(u-\phi),\;\;\text{ with }\;\; \mathcal{R}^*(\psi)=\exp(\psi)+\phi.
\end{equation*}
However, other choices are also possible.

For $u \in \operatorname{dom}(\mathcal{R}), v \in \operatorname{dom}(\nabla \mathcal R)$, the  Bregman distance associated to a Legendre function $\mathcal{R}$ is given by
\begin{equation}
    \label{eq:Bregman}
    \mathcal{D}(u,v) = \mathcal{R}(u) - \mathcal{R}(v) - \nabla \mathcal{R}(v)(u - v).
\end{equation}
Throughout this work, we will invoke the following three points identity \cite[Lemma 3.1]{chen1993convergence}:
\begin{align}
\mathcal{D}(u,v) - \mathcal{D}(u,w) + \mathcal{D}(v,w) & = (\nabla \mathcal{R}(v) -\nabla \mathcal{R}(w)) ( v- u).   \label{eq:three_point_identity}
\end{align}
The dual Bregman distance is denoted by $\mathcal{D}^*$ and given by
\begin{equation*}
    \mathcal{D}^*(\chi,\psi) = \mathcal{R}^*(\chi) - \mathcal{R}^*(\psi) - \nabla \mathcal{R}^*(\psi)(\chi - \psi).
\end{equation*}
A straightforward calculation shows that $\mathcal{D}(u,v)=\mathcal{D}^*(\chi,\psi)$ when $\psi = \nabla \mathcal R(u)$ and $\chi=\nabla \mathcal R(v)$.
\subsection{The conforming proximal Galerkin method}
Considering  two discrete spaces $V_h \subset V $ and $W_h \subset W:= L^2(\Omega_d;\mathbb R^m)$, the \textit{conforming} PG method for nonsymmetric VIs is given in \Cref{alg:main_alg_discrete}.
\begin{algorithm}[htb]
\caption{The Conforming Proximal Galerkin Method}
\begin{algorithmic}[1]\label{alg:main_alg_discrete}
    \State \textbf{input:} Initial latent solution guess $\psi_h^0  \in W_h$, a sequence of positive proximity parameters $\{\alpha_k\}$, and a function $\mathcal{R}^*$ with $\nabla \mathcal{R}^*: L^{\infty}(\Omega_d;\R^m) \rightarrow \mathcal{O}$.
    \State Initialize \(k = 1\).
    \State \textbf{repeat}
    \State \quad Find $u_h^{k} \in  V_{h}$
and $\psi_h^{k} \in W_h$ such that
   \begin{subequations} \label{eq:discrete_lvpp}
\begin{alignat}{2}
\alpha_{k} \, \mathcal{A}(u^{k}_h, v_h )  + b(v_h,\psi^{k}_h - \psi_h^{k-1}) & = \alpha_k \, F(v_h)  && ~\fa v_h \in V_h, \label{eq:lvpp_g_0}\\
b(u^{k}_h, w_h) - ( \nabla \mathcal{R}^{*} (\psi^{k}_h), w_h)_{\Omega_d}& = 0 && ~\fa  w_h \in W_h.  \label{eq:lvpp_g_1}
\end{alignat}
\end{subequations}
\State \quad Assign  \(k \gets k + 1\).
    \State \textbf{until} a convergence test is satisfied.
\end{algorithmic}
\end{algorithm}

In the above,  the bilinear form $b: V_h \times W_h \rightarrow \R$  is defined by
\begin{equation}
    b(v_h, w_h) = (Bv_h, w_h)_{\Omega_d}.
\end{equation}
Here, $u_h^k$ are primal approximations, and we call $\psi_h^k$ latent variables.
In addition, we define the dual approximations
\begin{equation}
    \lambda^k_h = (\psi_h^{k-1}- \psi_h^{k})/\alpha_{k} , \quad k \geq 1, \label{eq:discrete_Lagrange}
\end{equation}
which are viewed as $\lambda_h^k \in Q'$ via $\langle \lambda_h^k, q\rangle = (\lambda_h^k, q)_{\Omega_d}$. We also define the observable approximations
\begin{equation}
    o_h^k = \nabla \mathcal{R}^*(\psi_h^k) \in \mathcal{O}, \quad k \geq 0. \label{eq:constraint_preserving_approx}
\end{equation}
Here, $o_h^k\in\mathcal{O}$ by the property of our Legendre function \eqref{eq:const-prsv-img}.
Hereinafter, we assume that the finite dimensional subspaces $V_h \subset V$ and $W_h \subset W \subset Q^\prime$ satisfy the discrete inf-sup or Ladyzhenskaya--Babu\v{s}ka--Brezzi (LBB) condition
 \begin{equation}
    \inf_{w \in W_h} \sup_{v \in V_h} \frac{ b(v,w)}{\|v\|_{V} \|w\|_{Q^\prime}} = \mu_h >  \mu_0,
\label{eq:inf_sup}
\end{equation}
where $\mu_0 > 0$ is a mesh-independent positive constant. Since, by our standing assumption,  $\operatorname{im}B = Q\hookrightarrow L^2(\Omega_d;\mathbb{R}^m)$ densely, the inf-sup condition holds on the continuous level with $W=L^2(\Omega_d)$.  Thus,  \eqref{eq:inf_sup} is true if and only if there exists a Fortin operator $\Pi_h \colon V \rightarrow V_h$ satisfying $\|\Pi_h v \|_V\lesssim \|v\|_V$ for all $v \in V$ and
\begin{equation} \label{eq:fortin_map}
   b(v - \Pi_h v, w_h) = 0
    ~\fa
    v \in V,\; w_h \in W_h;
\end{equation}
see, e.g., \cite[Lemma 26.9]{ern2021finite2}. Refer to \Cref{remark:examples_enriching_fortin} for examples of pairs $(V_h, W_h)$ satisfying \eqref{eq:inf_sup}. In what follows, we let $\|\Pi_h\|$ denote the operator norm of $\Pi_h$.
\subsection{Well--posedness of the PG iterates \eqref{eq:discrete_lvpp}}
In this section, we demonstrate that each nonlinear subproblem of \Cref{alg:main_alg_discrete} has a unique solution. We also provide stability estimates for the discrete solution variables $u_h^k$ and $\lambda_h^k$.
\begin{theorem} \label{thm:existence_conforming} Assume that \eqref{eq:inf_sup} holds.
For all $k \geq 1$, there exists a unique solution to \eqref{eq:discrete_lvpp}.
\end{theorem}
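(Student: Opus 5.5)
The plan is to eliminate the primal variable and reduce \eqref{eq:discrete_lvpp} to a single nonlinear equation in the latent variable $\psi_h^k \in W_h$, which is then shown to be the gradient-type equation of a strictly monotone, coercive operator on the finite-dimensional space $W_h$.

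\textbf{Step 1: eliminate $u_h^k$.} Since $\mathcal{A}$ is coercive on $V \supset V_h$ by \eqref{eq:coercivity}, for any fixed $\psi \in W_h$ the linear problem
\[
\alpha_k \mathcal{A}(u, v_h) = \alpha_k F(v_h) - b(v_h, \psi - \psi_h^{k-1}) \quad \text{for all } v_h \in V_h
\]
has a unique solution $u = u(\psi) \in V_h$ by Lax--Milgram. The map $\psi \mapsto u(\psi)$ is affine: we write $u(\psi) = u_0 - \alpha_k^{-1} S\psi$, where $S\colon W_h \to V_h$ is linear and defined by $\mathcal{A}(S\psi, v_h) = b(v_h,\psi)$.

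\textbf{Step 2: reduce to one equation.} Substituting into \eqref{eq:lvpp_g_1}, the system is equivalent to finding $\psi \in W_h$ with
\[
G(\psi) := \alpha_k^{-1} b(S\psi, \cdot) + P_h \nabla\mathcal{R}^*(\psi) - b(u_0, \cdot) = 0 \quad \text{in } W_h',
\]
where $P_h$ denotes testing against $W_h$. We check monotonicity of $G$ next. Because $W_h$ is finite-dimensional and consists of $L^2$ functions that are piecewise polynomial, hence in $L^\infty$, the map $\nabla\mathcal{R}^*$ is well defined and continuous on $W_h$ by the discussion preceding \eqref{eq:const-prsv-img}. Moreover, $(\nabla\mathcal{R}^*(\psi)-\nabla\mathcal{R}^*(\chi), \psi-\chi)_{\Omega_d} \ge 0$ by convexity of $R^*$. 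The linear part gives
\[
b(S(\psi-\chi), \psi - \chi) = \mathcal{A}(S(\psi-\chi), S(\psi-\chi)) \ge C_{\mathrm{coerc}}\|S(\psi-\chi)\|_V^2 .
\]
By the inf-sup condition \eqref{eq:inf_sup}, $\mu_0\|w\|_{Q'} \le \sup_{v} b(v,w)/\|v\|_V = \sup_v \mathcal{A}(Sw,v)/\|v\|_V \lesssim \|Sw\|_V$, using continuity of $\mathcal{A}$ from \eqref{eq:continuity} and \eqref{eq:skew_symm_part}. Hence $S$ is injective and the linear part is strongly monotone in the $Q'$-norm, which is a norm on the finite-dimensional space $W_h$. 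Therefore $G$ is continuous and strongly monotone on $W_h$.

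\textbf{Step 3: existence and uniqueness.} We conclude by the Browder--Minty theorem, or in finite dimensions by a Brouwer-type argument, using that strong monotonicity implies coercivity $\langle G(\psi),\psi\rangle/\|\psi\| \to \infty$. This yields a unique $\psi_h^k$, and then $u_h^k = u(\psi_h^k)$ is unique. Uniqueness of the pair can also be seen directly: take the difference of two solutions, test \eqref{eq:lvpp_g_0} with $\delta u$ and \eqref{eq:lvpp_g_1} with $\delta\psi$, and subtract. This gives
\[
\alpha_k\mathcal{A}(\delta u,\delta u) + (\nabla\mathcal{R}^*(\psi_1)-\nabla\mathcal{R}^*(\psi_2),\delta\psi) = 0,
\]
so $\delta u=0$, and then $b(v_h,\delta\psi)=0$ for all $v_h$ gives $\delta\psi=0$ by the inf-sup condition.

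The main obstacle is the non-symmetry. Unlike the symmetric case, the reduced problem is not the optimality condition of an energy, so one cannot minimize a convex functional. The step requiring care is showing that $b(S\psi,\psi)$ is coercive even though $S$ is built from the non-symmetric $\mathcal{A}$. This works because testing with $S\psi$ itself exposes only the symmetric part $\mathcal{A}_0$, and the inf-sup condition then converts control of $\|S\psi\|_V$ into control of $\psi$.
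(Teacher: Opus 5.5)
Your proof is correct, and it takes a genuinely different route from the paper's.

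\emph{The paper's route.} The paper uses a continuation (homotopy) argument in the skew-symmetric part. It sets $\mathcal{A}_t=\mathcal{A}_0+t\mathcal{A}_n$ and takes the symmetric case $t=0$ from the energy-based well-posedness result \cite[Theorem~3.1]{aprioriPG}, which is a convex saddle-point argument. A Banach contraction on $V_h$ then shows that unique solvability at $\tau$ carries over to $[\tau,\tau+t_0]$ whenever $t_0<C_{\mathrm{coerc}}/c_1$. Finitely many such steps reach $t=1$.

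\emph{Your route.} You eliminate $u_h^k$ by Lax--Milgram and work directly with the reduced latent equation on $W_h$. The key identity is $b(Sw,w)=\mathcal{A}(Sw,Sw)=\mathcal{A}_0(Sw,Sw)$: testing with $Sw$ shows that the Schur-complement term only sees the symmetric part. The inf-sup condition \eqref{eq:inf_sup} then upgrades this to strong monotonicity on $W_h$, and a finite-dimensional Minty--Browder argument finishes the proof.

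\emph{What each buys.} Your argument is self-contained. It needs neither the symmetric PG theory nor the step-by-step continuation, and it makes visible exactly where the inf-sup condition enters: injectivity of $S$, i.e.\ $\mu_h>0$. It also gives an explicit monotonicity constant $\alpha_k^{-1}C_{\mathrm{coerc}}\mu_0^2/(C_{\mathrm{bnd}}+c_1)^2$. The paper's route is modular and mirrors the classical Kinderlehrer--Stampacchia proof for non-symmetric VIs. It turns any solvability result for the symmetric subproblem into one for the non-symmetric subproblem, with a step length independent of $h$ and $\alpha_k$. Its perturbation step uses only monotonicity of $\nabla\mathcal{R}^*$ and leaves continuity of $\nabla\mathcal{R}^*$ on $W_h$ to the symmetric result. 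Your proof needs that continuity directly, and you justify it through $W_h\subset L^\infty$.
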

\begin{proof}
The proof follows the continuation argument in \cite[Chap.~2, Sec.~2]{kinderlehrer2000introduction}.
We drop the superscript $k$ to simplify notation and define
\[
\mathcal{A}_t(u,v) := \mathcal{A}_0(u,v) + t\,\mathcal{A}_n(u,v),\quad t\in [0,1].
\]
The proof proceeds in two steps.

\medskip
\textit{Step~1.}
Assume that, for some $0\le \tau \le 1$, the problem
\begin{equation}
\label{eq:fixed_point_funct}
\begin{alignedat}{2}
\alpha \mathcal{A}_\tau(u_h,v_h) + b(v_h,\psi_h) &= \ell(v_h)
&& \qquad\forall v_h\in V_h, \\
b(u_h,w_h) - (\nabla\mathcal{R}^*(\psi_h),w_h)_{\Omega_d} &= 0,
&& \qquad\forall w_h\in W_h,
\end{alignedat}
\end{equation}
admits a unique solution $(u_h,\psi_h)\in V_h\times W_h$ for any $\ell\in V_h'$. We show that \eqref{eq:fixed_point_funct} remains uniquely solvable with
$\mathcal{A}_t$ in place of $\mathcal{A}_\tau$ for all $t\in[\tau,\tau+t_0]$
and some $t_0>0$ independent of~$\tau$.
Given $q_h\in V_h$ and $t\in[\tau, \tau+t_0]$, let $(u_h,\psi_h)\in V_h\times W_h$ be the unique solution of
\begin{equation}
\label{eq:fixed_point_funct-shift}
\begin{alignedat}{2}
\alpha \mathcal{A}_\tau(u_h,v_h) + b(v_h,\psi_h)
&= (\tau-t)\alpha \mathcal{A}_n(q_h,v_h) + \ell(v_h),
&&\qquad \forall v_h\in V_h, \\
b(u_h,w_h) - (\nabla\mathcal{R}^*(\psi_h),w_h)_{\Omega_d}
&= 0,
&& \qquad\forall w_h\in W_h.
\end{alignedat}
\end{equation}
Define the mapping $\Phi_t:V_h\to V_h$ by $\Phi_t(q_h):=u_h$.
We show that $\Phi_t$ is a contraction for all $t\in[\tau,\tau+t_0]$.

Fix $q_h^1,q_h^2\in V_h$. Let $(u_h^1,\psi_h^1)$ and $(u_h^2,\psi_h^2)$ solve \eqref{eq:fixed_point_funct-shift} for $q_h=q_h^1$ and $q_h=q_h^2$, respectively.
Subtracting these two instances of \eqref{eq:fixed_point_funct-shift}
yields
\begin{subequations}
\label{eq:fixed_point_funct_diff}
\begin{alignat}{2}
\alpha \mathcal{A}_\tau(u_h^1-u_h^2,v_h)
+ b(v_h,\psi_h^1-\psi_h^2)
&= (\tau-t)\alpha \mathcal{A}_n(q_h^1-q_h^2,v_h),&&
\; \forall v_h\in V_h, \label{eq:fixed_point_funct_diff1}\\
b(u_h^1-u_h^2,w_h)
- (\nabla\mathcal{R}^*(\psi_h^1)-\nabla\mathcal{R}^*(\psi_h^2),w_h)_{\Omega_d}
&= 0,&&
\; \forall w_h\in W_h. \label{eq:fixed_point_funct_diff2}
\end{alignat}
\end{subequations}
Testing \eqref{eq:fixed_point_funct_diff1} with
$v_h=u_h^1-u_h^2$ and \eqref{eq:fixed_point_funct_diff2} with
$w_h=\psi_h^2-\psi_h^1$ and adding the resulting identities,
we obtain
\begin{align*}
\alpha \mathcal{A}_\tau(u_h^1-u_h^2,u_h^1-u_h^2)
+ (\nabla\mathcal{R}^*(\psi_h^1)-\nabla\mathcal{R}^*(\psi_h^2),
\psi_h^1-\psi_h^2)_{\Omega_d}
= (\tau-t)\alpha \mathcal{A}_n(q_h^1-q_h^2,u_h^1-u_h^2).
\end{align*}
Using the coercivity of $\mathcal{A}_\tau$ ($\mathcal{A}_\tau(v,v) = \mathcal{A}_0(v,v) \geq C_{\mathrm{coerc}} \|v\|^2_V$),  the continuity of
$\mathcal{A}_n$, and the monotonicity of $\nabla\mathcal{R}^*$, we deduce
\[
C_{\mathrm{coerc}}\|u_h^1-u_h^2\|_V^2
\le c_1|\tau-t|\|q_h^1-q_h^2\|_V\|u_h^1-u_h^2\|_V.
\]
Hence,
\[
\|\Phi_t(q_h^1)-\Phi_t(q_h^2)\|_V
= \|u_h^1-u_h^2\|_V
\le \frac{c_1}{C_{\mathrm{coerc}}}|\tau-t|
\,\|q_h^1-q_h^2\|_V.
\]
Therefore, for  $0<t_0<\frac{C_{\mathrm{coerc}}}{c_1}$,  $\Phi_t$ is a contraction for all $t\in[\tau,\tau+t_0]$.
By the Banach fixed-point theorem, $\Phi_t$ admits a unique fixed point
$u_h\in V_h$, and the associated $\psi_h\in W_h$ solves
\begin{equation}
\label{eq:fixed_point_funct-t}
\begin{alignedat}{2}
\alpha \mathcal{A}_t(u_h,v_h) + b(v_h,\psi_h) &= \ell(v_h),
&& \qquad\forall v_h\in V_h, \\
b(u_h,w_h) - (\nabla\mathcal{R}^*(\psi_h),w_h)_{\Omega_d} &= 0,
&&\qquad \forall w_h\in W_h.
\end{alignedat}
\end{equation}

\medskip
\textit{Step~2.}
For $\tau=0$, problem \eqref{eq:fixed_point_funct} has a unique solution by
\cite[Theorem~3.1]{aprioriPG}, since $\mathcal{A}_0$ is symmetric and coercive.
Applying Step~1 iteratively a finite number of times yields well-posedness
up to $t=1$, which concludes the proof with
$\ell=\alpha F + b(\cdot,\psi_h^{k-1})$.
\end{proof}

Having established well-posedness of the discrete problem at each iteration,
we now derive stability bounds for the  weighted averages of the iterates:
\begin{equation} \label{eq:weighted_averages}
    \overline{u}_h^\ell  = \frac{\sum_{k=1}^{\ell} \alpha_k u_h^k}{\sum_{k=1}^{\ell} \alpha_k}, \quad
    \overline{\lambda}_h^\ell = \frac{\sum_{k=1}^{\ell} \alpha_k \lambda_h^k}{\sum_{k=1}^{\ell} \alpha_k}
    = \frac{\psi_h^\ell - \psi_h^0}{\sum_{k=1}^{\ell} \alpha_k}.
\end{equation}

\begin{lemma}[Stability]\label{lemma:stability}
Assume that $\psi_h^0 \in W_h$ is chosen such that $\nabla \mathcal{R}^*(\psi_h^0) \in V$ and
$\|\nabla \mathcal{R}^*(\psi_h^0)\|_V \leq c$ for some constant $c$ independent of $h$.
Then, for every $\ell \geq 1$, we have
\begin{equation} \label{eq:stab_1}
    \|\overline{u}_h^\ell\|_V^2 + \|\overline{\lambda}_h^\ell\|_{Q'}
    \leq C_{\mathrm{stab}},
\end{equation}
where $C_{\mathrm{stab}}$ is independent of $\ell$, $\{\alpha_k\}_{k=1}^\ell$, and $h$.
\end{lemma}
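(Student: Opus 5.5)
The plan is the standard energy argument for Bregman proximal point methods. We test \eqref{eq:lvpp_g_0} with $v_h = u_h^k - \Pi_h w$ for a suitable fixed comparison function $w \in V$ with $Bw \in \mathcal O$, and then use the Fortin property \eqref{eq:fortin_map} together with \eqref{eq:lvpp_g_1}. Two natural choices for $w$ are $o^0 := \nabla\mathcal R^*(\psi_h^0) \in V$, whose $V$-norm is bounded by assumption, or $u^*$.

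With $w = o^0$ and the Fortin property, we get
$$b(u_h^k - \Pi_h w, \psi_h^k - \psi_h^{k-1}) = (o_h^k - Bw, \psi_h^k - \psi_h^{k-1})_{\Omega_d}.$$
Writing $\psi = \nabla\mathcal R(o)$ and applying the three-point identity \eqref{eq:three_point_identity} in dual form, this equals
$$\mathcal D(Bw, o_h^k) - \mathcal D(Bw, o_h^{k-1}) + \mathcal D(o_h^k, o_h^{k-1}) \ge \mathcal D(Bw, o_h^k) - \mathcal D(Bw, o_h^{k-1}).$$
Thus
$$\alpha_k \mathcal A(u_h^k, u_h^k - \Pi_h w) + \mathcal D(Bw, o_h^k) - \mathcal D(Bw, o_h^{k-1}) \le \alpha_k F(u_h^k - \Pi_h w).$$
Coercivity \eqref{eq:coercivity}, continuity of $\mathcal A$ (via \eqref{eq:continuity} and \eqref{eq:skew_symm_part}), the bound $\|\Pi_h\|$, and Young's inequality give
$$\tfrac{C_{\mathrm{coerc}}}{2}\alpha_k\|u_h^k\|_V^2 + \mathcal D(Bw, o_h^k) - \mathcal D(Bw, o_h^{k-1}) \le C\alpha_k\big(\|w\|_V^2 + \|F\|_{V'}^2\big).$$
Summing over $k$ telescopes the Bregman terms. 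With $w = o^0$, $\mathcal D(Bw, o_h^0) = \mathcal D(o^0, o^0) = 0$, provided $B$ acts as the relevant identity as in the examples; otherwise take $w$ to be a lifting and assume a bounded initial divergence. Dividing by $\sum_k \alpha_k$ and using Jensen's inequality (convexity of $\|\cdot\|_V^2$) then bounds $\|\overline u_h^\ell\|_V^2$ uniformly. \textbf{The main obstacle} is this choice of comparison function: we need $\mathcal D(Bw, o_h^0)$ bounded independently of $h$, and this is why the hypothesis on $\psi_h^0$ is imposed.

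For $\overline\lambda_h^\ell$, we average \eqref{eq:lvpp_g_0} with weights $\alpha_k$ and use \eqref{eq:discrete_Lagrange} to obtain
$$b(v_h, \overline\lambda_h^\ell) = \mathcal A(\overline u_h^\ell, v_h) - F(v_h) \quad \text{for all } v_h \in V_h.$$
The discrete inf-sup condition \eqref{eq:inf_sup} then gives
$$\mu_0\|\overline\lambda_h^\ell\|_{Q'} \le \sup_{v_h}\frac{b(v_h, \overline\lambda_h^\ell)}{\|v_h\|_V} \le (C_{\mathrm{bnd}} + c_1)\|\overline u_h^\ell\|_V + \|F\|_{V'},$$
which is bounded by the first step. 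Squaring where necessary and combining the two bounds yields \eqref{eq:stab_1}, with a constant depending only on $C_{\mathrm{coerc}}$, $C_{\mathrm{bnd}}$, $c_1$, $\mu_0$, $\|\Pi_h\|$, $\|F\|_{V'}$, and $c$.
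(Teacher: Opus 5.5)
Your proof is correct and follows essentially the same route as the paper. You test \eqref{eq:lvpp_g_0} with $u_h^k$ minus a discrete function whose $b$-pairing reproduces $\nabla\mathcal R^*(\psi_h^0)$, and use the three-point identity so the Bregman terms telescope from $\mathcal D(o_h^0,o_h^0)=0$; then you apply Jensen and obtain $\overline\lambda_h^\ell$ from the averaged equation and the inf--sup condition. The paper uses the minimal-norm discrete lifting $u_h^0$ (bounded via inf--sup) where you use the Fortin image $\Pi_h\nabla\mathcal R^*(\psi_h^0)$ (bounded via $\|\Pi_h\|$), and it applies Young's inequality after summing rather than at each step, but these differences are only cosmetic.
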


\begin{proof}
Define $u_h^0 \in V_h$ as the minimal-norm function satisfying \eqref{eq:lvpp_g_1} with $k=0$; i.e.,
\begin{equation}
\label{eq:uh0_def}
u_h^0:=\underset{v\in V_h}{\operatorname{arg\,min}} \|v\|_V \quad \text{subject to} \quad
b(v, w_h) = (\nabla \mathcal{R}^*(\psi_h^0), w_h)_{\Omega_d} \quad \forall w_h \in W_h.
\end{equation}
Existence and uniqueness of $u_h^0$ follow from the inf--sup condition \eqref{eq:inf_sup}.
Moreover, standard saddle-point arguments (see, e.g., \cite[Section~4.2]{ern2004theory}) yield
\begin{equation}
\label{eq:uh0_bound}
\|u_h^0\|_{V} \lesssim \|\nabla \mathcal{R}^*(\psi_h^0)\|_{V}.
\end{equation}

Using coercivity of $\mathcal{A}$ and \eqref{eq:discrete_lvpp}, we obtain
\begin{align}
C_{\mathrm{coerc}}\|u_h^k\|_{V}^2
&\le \mathcal{A}(u_h^k, u_h^k - u_h^0) + \mathcal{A}(u_h^k, u_h^0)\label{eq:stab_pre3pt}  \\
&= -\frac{1}{\alpha_k }\, b(\psi_h^k - \psi_h^{k-1}, u_h^k - u_h^0) + F(u_h^k - u_h^0) + \mathcal{A}(u_h^k, u_h^0) \nonumber\\
&= -\frac{1}{\alpha_k}\,(\psi_h^k - \psi_h^{k-1},
\nabla \mathcal{R}^*(\psi_h^k) -  \nabla \mathcal{R}^*(\psi_h^0))_{\Omega_d} + F(u_h^k - u_h^0)
+ \mathcal{A}(u_h^k, u_h^0).  \nonumber
\end{align}
Invoking the three-point identity \eqref{eq:three_point_identity} and using that
$\mathcal{D}^*(\psi_h^{k-1},\psi_h^k)\ge 0$, we deduce
\begin{align}
\label{eq:stab_step1}
C_{\mathrm{coerc}} \alpha_k \|u_h^k\|_{V}^2
+ (\mathcal{D}^*(\psi_h^{k},\psi_h^0) - \mathcal{D}^*(\psi_h^{k-1}, \psi_h^0),1)_{\Omega_d}
\le \alpha_k ( \mathcal{A}(u_h^k, u_h^0) + F(u_h^k - u_h^0))
\end{align}
Summing \eqref{eq:stab_step1} from $k=1$ to $k=\ell$, noting that $\mathcal{D}^*(\psi_h^0, \psi_h^0)= 0$, dividing by $\sum_{k=1}^{\ell}\alpha_k$,
and applying Jensen's inequality yields
\begin{align}
\label{eq:stab_step2}
C_{\mathrm{coerc}}\|\overline{u}_h^\ell\|_V^2
+ \frac{(\mathcal{D}^*(\psi_h^\ell, \psi_h^0),1)_{\Omega_d}}{\sum_{k=1}^{\ell}\alpha_k}
& \le \mathcal{A}(\overline{u}_h^\ell, u_h^0) +  F(\overline{u}_h^\ell- u_h^0)  \\
& \nonumber
\leq (C_{\mathrm{bnd}} + c_1 + \|F\|_{V'}) \|\overline{u}_h^\ell\|_{V} \|u_h^0\|_V + \|F\|_{V'}\|u_h^0\|_V.
\end{align}
Using Young's inequality, together with
\eqref{eq:uh0_bound}, we obtain the bound \eqref{eq:stab_1} for $\|\overline{u}_h^\ell\|_V^2$.
Finally, the estimate for $\|\overline{\lambda}_h^\ell\|_{Q'}$ follows from the inf--sup
condition \eqref{eq:inf_sup} combined with \eqref{eq:lvpp_g_0}.
\end{proof}

 \subsection{Best approximation properties and error rates}
 We now derive best approximation estimates for the weighted averages \eqref{eq:weighted_averages} in \Cref{thm:best_approximation}. We also derive error rates under additional assumptions, see \Cref{thm:error_rate_conforming}.
\begin{theorem}[Best approximation estimate for the weighted averages $\overline u_h^\ell$] \label{thm:best_approximation}
For any $\ell \geq 1$, the following estimate holds
\begin{align}  \label{eq:best_approx}
\frac{C_{\mathrm{coerc}}}{4}\|\overline{u}_h^\ell - u^*\|^2_V & \leq \frac{(\mathcal{D}( o^*, o_h^0),1)_{\Omega_d}}{\sum_{k=1}^{\ell} \alpha_k} + \frac{(C_{\mathrm{bnd}} + c_1)^2  +C_{\mathrm{coerc}}^2} {2C_{\mathrm{coerc}}} \| \Pi_h u^* - u^*\|_V^2 \\
&\quad+ |\langle B' \lambda^*, \Pi_h u^* - u^* \rangle| + \inf_{v \in K} | \langle B' \lambda^*, v -\overline{u}_h^\ell  \rangle|,  \nonumber
\end{align}
where $o^* = Bu^*, o_h^0 = \nabla \mathcal{R}^*(\psi_h^0),$ and $\Pi_h $ is the Fortin map.
\end{theorem}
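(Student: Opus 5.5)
Following the stability argument of \Cref{lemma:stability}, I will derive a per-iteration energy inequality and then sum and average. Replace $u_h^0$ by the Fortin interpolant $\Pi_h u^*$ and $\nabla\mathcal{R}^*(\psi_h^0)$ by $o^*=Bu^*$. By \eqref{eq:fortin_map}, $b(\Pi_h u^*,w_h)=(o^*,w_h)_{\Omega_d}$ for all $w_h\in W_h$, so $\Pi_h u^*$ plays the role of a discrete preimage of $o^*$. Write $e_h^k:=u_h^k-\Pi_h u^*$. Coercivity gives
\[
C_{\mathrm{coerc}}\|u_h^k-u^*\|_V^2\le \mathcal{A}(u_h^k-u^*,u_h^k-u^*)
=\mathcal{A}(u_h^k-u^*,e_h^k)+\mathcal{A}(u_h^k-u^*,\Pi_h u^*-u^*).
\]
For the first term, use \eqref{eq:lvpp_g_0} with $v_h=e_h^k$ to write $\mathcal{A}(u_h^k,e_h^k)-F(e_h^k)=-\alpha_k^{-1}b(e_h^k,\psi_h^k-\psi_h^{k-1})$. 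By \eqref{eq:lvpp_g_1} and the Fortin property, this equals $-\alpha_k^{-1}(o_h^k-o^*,\psi_h^k-\psi_h^{k-1})_{\Omega_d}$. For the continuous part, \eqref{eq:def_lambda*} gives $\mathcal{A}(u^*,e_h^k)-F(e_h^k)=\langle B'\lambda^*,e_h^k\rangle$.

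Next apply the three-point identity \eqref{eq:three_point_identity} in dual form, with $\nabla\mathcal{R}(o_h^k)=\psi_h^k$, and identify $\nabla\mathcal{R}(o^*)$ with the formal primal counterpart. This gives
\[
-(o_h^k-o^*,\psi_h^k-\psi_h^{k-1})_{\Omega_d}=(\mathcal{D}(o^*,o_h^{k-1})-\mathcal{D}(o^*,o_h^k)-\mathcal{D}(o_h^k,o_h^{k-1}),1)_{\Omega_d},
\]
which telescopes after multiplying by $\alpha_k$. The leftover dual term is split as $-\langle B'\lambda^*,e_h^k\rangle=-\langle B'\lambda^*,u_h^k-u^*\rangle-\langle B'\lambda^*,u^*-\Pi_h u^*\rangle$. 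The second piece is bounded by $|\langle B'\lambda^*,\Pi_h u^*-u^*\rangle|$.

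For the first piece, insert an arbitrary $v\in K$. By \eqref{eq:lam-normal-cone}, $\langle B'\lambda^*,v-u^*\rangle\ge0$, so
\[
-\langle B'\lambda^*,u_h^k-u^*\rangle\le \langle B'\lambda^*,v-u_h^k\rangle .
\]
After averaging this yields $\langle B'\lambda^*,v-\overline u_h^\ell\rangle$, and taking the infimum over $v$ gives the last term. The remaining term $\mathcal{A}(u_h^k-u^*,\Pi_h u^*-u^*)$ is bounded by $(C_{\mathrm{bnd}}+c_1)\|u_h^k-u^*\|_V\|\Pi_h u^*-u^*\|_V$. Young's inequality then absorbs half of the coercive term, which produces the constant $(C_{\mathrm{bnd}}+c_1)^2/(2C_{\mathrm{coerc}})$.

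Finally, multiply by $\alpha_k$, sum over $k=1,\dots,\ell$, and drop the nonnegative terms $\mathcal{D}(o^*,o_h^\ell)$ and $\mathcal{D}(o_h^k,o_h^{k-1})$. Dividing by $\sum_k\alpha_k$ and applying Jensen's inequality to $\|\cdot-u^*\|_V^2$ gives the bound for $\overline u_h^\ell$. The extra $C_{\mathrm{coerc}}^2/(2C_{\mathrm{coerc}})$ term and the factor $1/4$ come from a further triangle/Young step, and I would adjust these constants during the write-up.

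The main obstacle is the three-point step. Here $o^*$ may lie on $\partial C(x)$, where $\nabla\mathcal{R}(o^*)$ is undefined, so the identity must be justified in the one-sided form $\mathcal{D}(o^*,o_h^{k-1})-\mathcal{D}(o^*,o_h^k)-\mathcal{D}(o_h^k,o_h^{k-1})=(\psi_h^k-\psi_h^{k-1})(o^*-o_h^k)$. This form only involves $\nabla\mathcal{R}$ at the interior points $o_h^k,o_h^{k-1}$ and holds pointwise whenever $\mathcal{D}(o^*,o_h^0)<\infty$. I would verify it directly from \eqref{eq:Bregman}, using \eqref{eq:const-prsv-img}.
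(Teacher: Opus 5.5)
Your proof is correct. It reaches the estimate by a slightly different decomposition than the paper.

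\textbf{The paper's route.} The paper applies coercivity to the discrete error $u_h^k-\Pi_h u^*$. The three-point identity, \eqref{eq:lvpp_g_0} and the Fortin property give a per-iterate inequality in which $\alpha_k\mathcal{A}(u_h^k-\Pi_h u^*,u_h^k-\Pi_h u^*)$ is controlled by the telescoping Bregman terms plus the residual $-\alpha_k\mathcal{A}(\Pi_h u^*,u_h^k-\Pi_h u^*)+\alpha_kF(u_h^k-\Pi_h u^*)$. Only after summing and averaging is this residual rewritten through $\lambda^*$ and the normal-cone inequality. The paper then passes from $\overline u_h^\ell-\Pi_h u^*$ to $\overline u_h^\ell-u^*$ via $\|\overline u_h^\ell-u^*\|_V^2\le 2\|\overline u_h^\ell-\Pi_h u^*\|_V^2+2\|\Pi_h u^*-u^*\|_V^2$. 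That last step is exactly where the factor $1/4$ and the extra $C_{\mathrm{coerc}}^2/(2C_{\mathrm{coerc}})$ come from.

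\textbf{Your route.} You apply coercivity to the full error $u_h^k-u^*$ and bring in $\lambda^*$ per iterate through \eqref{eq:def_lambda*}. Carried through, your argument yields $\frac{C_{\mathrm{coerc}}}{2}\|\overline u_h^\ell-u^*\|_V^2$ on the left. On the right, only $(C_{\mathrm{bnd}}+c_1)^2/(2C_{\mathrm{coerc}})$ multiplies $\|\Pi_h u^*-u^*\|_V^2$. This is strictly stronger than \eqref{eq:best_approx} and implies it at once, so there are no constants to ``adjust'': lower the left coefficient and enlarge the right-hand side. Your version is more economical, with no final triangle inequality.

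\textbf{What the paper's ordering buys.} Coercivity is only ever applied to discrete functions. This is the template reused in the proof of \Cref{thm:FOSPG_err_rate}. There the stability tools \eqref{eq:coercivity_AD} and \eqref{eq:nonnegativity_convection} are available only for discrete triples, so inserting $u^*$ into the coercive form, as you do, would not work.

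\textbf{The ``main obstacle'' you flag.} This is not a real difficulty. Take \eqref{eq:three_point_identity} with $u=o^*$, $v=o_h^k$, $w=o_h^{k-1}$. It involves $\nabla\mathcal{R}$ only at $o_h^k$ and $o_h^{k-1}$, which lie in $\operatorname{int}C(x)$ by \eqref{eq:const-prsv-img}. It requires only $o^*(x)\in C(x)=\operatorname{dom}R(x,\cdot)$. This is precisely how the paper uses the identity, so no formal identification of $\nabla\mathcal{R}(o^*)$ is needed.
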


\begin{proof}
The proof builds and extends on arguments from \cite{aprioriPG}.  Recall the definition of $o_h^k$ in \eqref{eq:constraint_preserving_approx} and use the three point identity \eqref{eq:three_point_identity} to  derive that
\begin{equation}
\label{eq:three_points_0}
(\mathcal{D}(o^*, o_h^{k}),1)_{\Omega_d} = (\mathcal{D}(o^*, o_h^{k-1}) - \mathcal{D}(o_h^k, o_h^{k-1}),1)_{\Omega_d} + (\nabla \mathcal{R}(o_h^{k}) - \nabla \mathcal{R}(o_h^{k-1}), o_h^{k} -  o^*)_{\Omega_d}.
\end{equation}
With the fact that $\nabla \mathcal{R} = (\nabla \mathcal{R}^*)^{-1}$, \eqref{eq:discrete_lvpp} and the Fortin map \eqref{eq:fortin_map}, we obtain
\begin{align}
 (\nabla \mathcal{R}(o_h^{k}) - \nabla \mathcal{R}(o_h^{k-1}), o_h^{k} -  o^*)_{\Omega_d}
 & = (\psi_h^{k} - \psi_h^{k-1} , \nabla \mathcal R^*(\psi_h^k) - B u^*)_{\Omega_d}  \nonumber  \\
& = b(u_h^{k} - \Pi_h u^*,\psi_h^{k} - \psi_h^{k-1} ) \nonumber   \\
& = -\alpha_{k} \mathcal{A}( u_h^{k} , u_h^{k} - \Pi_h u^*) + \alpha_k F(u_h^{k} - \Pi_h u^*) \nonumber \\
& = - \alpha_{k} \mathcal{A}( u_h^{k} - \Pi_h u^* , u_h^{k} - \Pi_h u^*)  - \alpha_{k} \mathcal{A}(\Pi_h u^*, u_h^{k} - \Pi_h u^*) \nonumber \\
& \quad +\alpha_k F(u_h^{k} - \Pi_h u^*). \nonumber
\end{align}
With \eqref{eq:three_points_0} and the fact that $\mathcal{D}(o_h^k, o_h^{k-1}) \geq 0$, we obtain that
\begin{equation}
    \label{eq:three_points_1}
    \begin{aligned}
        (\mathcal{D}( o^*, o_h^{k}),1)_{\Omega_d} &+ \alpha_k \mathcal{A}( u_h^{k}
         - \Pi_h u^* , u_h^{k} - \Pi_h u^*)\\
        & \leq  (\mathcal{D}(o^*, o_h^{k-1}),1)_{\Omega_d}  - \alpha_k \mathcal{A}(\Pi_h u^*, u_h^{k} - \Pi_h u^*)
        + \alpha_k F(u_h^{k} - \Pi_h u^*).
    \end{aligned}
\end{equation}
We sum \eqref{eq:three_points_1}, use coercivity of $\mathcal{A}$, and divide by $\sum_{k=1}^\ell \alpha_k$. We obtain
\begin{multline}
\frac{1}{\sum_{k=1}^{\ell} \alpha_k} \left( (\mathcal{D}(o^*, o_h^{\ell}) -  \mathcal{D}( o^*, o_h^0),1)_{\Omega_d} + \sum_{k=1}^\ell  \alpha_k C_{\mathrm{coerc}} \|u_h^{k} - \Pi_h u^*\|_V^2  \right)  \\   \leq \mathcal{A}(\Pi_h u^*, \Pi_h u^* - \overline{u}_h^\ell) - F( \Pi_h u^* - \overline{u}_h^\ell ) .  \nonumber
\end{multline}
Using Jensen's inequality, we arrive at
\begin{equation}
\|\overline{u}_h^\ell - \Pi_h u^*\|_V^2 \leq \frac{\sum_{k=1}^{\ell} \alpha_k \|u_h^k - \Pi_h u^*\|_V^2}{\sum_{k=1}^{\ell} \alpha_k}.
\end{equation}
This yields
\begin{equation}
\begin{aligned}
    \frac{(\mathcal{D}(o^*, o_h^{\ell}),1)_{\Omega_d}}{\sum_{k=1}^{\ell} \alpha_k}    &+ C_{\mathrm{coerc}}\|\overline{u}_h^\ell - \Pi_h u^*\|_V^2 \\
    &\leq\frac{(\mathcal{D}( o^*, o_h^0),1)_{\Omega_d}}{\sum_{k=1}^{\ell} \alpha_k}    + \mathcal{A}(\Pi_h u^*, \Pi_h u^* - \overline{u}_h^\ell) -   F(\Pi_h u^* - \overline{u}_h^\ell).
    \end{aligned}
\end{equation}
We now handle the last two terms on the right-hand side above, denoted by $W$. Using \eqref{eq:def_lambda*} and \eqref{eq:lam-normal-cone}, we derive for any $v \in K$
\begin{align}
W &
= \mathcal{A}(\Pi_h u^* - u^*, \Pi_h u^* - \overline{u}_h^\ell) +
\langle B' \lambda^*, \Pi_h u^* - \overline{u}_h^\ell \rangle \\
& =  \mathcal{A}(\Pi_h u^* - u^*, \Pi_h u^* - \overline{u}_h^\ell)  + \langle B' \lambda^*, \Pi_h u^* - u^* \rangle + \langle B' \lambda^*, u^* -\overline{u}_h^\ell  \rangle \nonumber \\
& \leq \mathcal{A}(\Pi_h u^* - u^*, \Pi_h u^* - \overline{u}_h^\ell)  + \langle B' \lambda^*, \Pi_h u^* - u^* \rangle + \langle B' \lambda^*, v -\overline{u}_h^\ell  \rangle, \nonumber
\end{align}
 Continuity of $\mathcal{A}$, Young's inequality, and the fact that $v$ was arbitrary provide the result.
\end{proof}

\begin{lemma}[Best approximation estimate for the weighted averages $\overline \lambda_h^\ell$] \label{lemma:dual_variable_best}
For any $\ell \geq 1$,
    \begin{align}
\|\overline \lambda_h^\ell - \lambda^*\|_{Q'} & \lesssim \sup_{v \in V} \frac{\langle  B' \lambda^*, v - \Pi_h v \rangle}{\|v\|_V} + \|\overline{u}_h^\ell - u^*\|_{V}.
    \end{align}
\end{lemma}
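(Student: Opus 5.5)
The plan is a standard inf--sup argument. I split the error using the $L^2$-type projection of $\lambda^*$ onto $W_h$ induced by the Fortin map, and control the discrete part through the discrete equation \eqref{eq:lvpp_g_0} averaged over the iterations.

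First, I average \eqref{eq:lvpp_g_0}. Dividing by $\alpha_k$, summing against the weights $\alpha_k$, and dividing by $\sum_{k=1}^\ell\alpha_k$ gives, by \eqref{eq:discrete_Lagrange} and \eqref{eq:weighted_averages},
\begin{equation*}
\mathcal{A}(\overline u_h^\ell, v_h) - b(v_h,\overline\lambda_h^\ell) = F(v_h) \quad \forall v_h\in V_h .
\end{equation*}
Subtracting \eqref{eq:def_lambda*} restricted to $v_h\in V_h$ yields the error equation
\begin{equation*}
b(v_h,\overline\lambda_h^\ell) - \langle B'\lambda^*, v_h\rangle = \mathcal{A}(\overline u_h^\ell - u^*, v_h) \quad \forall v_h \in V_h .
\end{equation*}
Here the sign convention is that $\lambda_h^k=(\psi_h^{k-1}-\psi_h^k)/\alpha_k$, so $b(v_h,\psi_h^k-\psi_h^{k-1})=-\alpha_k b(v_h,\lambda_h^k)$.

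Second, I introduce a discrete approximation $\lambda_h\in W_h$ of $\lambda^*$ defined by
\begin{equation*}
b(v_h,\lambda_h)=\langle B'\lambda^*, \Pi_h^\dagger v_h\rangle .
\end{equation*}
A cleaner route avoids $\lambda_h$ and works directly with the $Q'$ norm. For any $v\in V$, the Fortin property \eqref{eq:fortin_map} gives
\begin{equation*}
\langle \overline\lambda_h^\ell, Bv\rangle = b(v,\overline\lambda_h^\ell) = b(\Pi_h v,\overline\lambda_h^\ell),
\end{equation*}
since $\overline\lambda_h^\ell\in W_h$. Hence
\begin{equation*}
\langle B'(\overline\lambda_h^\ell-\lambda^*), v\rangle = \bigl[b(\Pi_h v,\overline\lambda_h^\ell) - \langle B'\lambda^*,\Pi_h v\rangle\bigr] + \langle B'\lambda^*, \Pi_h v - v\rangle .
\end{equation*}
By the error equation, the bracket equals $\mathcal{A}(\overline u_h^\ell-u^*,\Pi_h v)$. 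This is bounded by $(C_{\mathrm{bnd}}+c_1)\|\Pi_h\|\,\|\overline u_h^\ell-u^*\|_V\|v\|_V$ using \eqref{eq:skew_symm_part} and \eqref{eq:continuity}.

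Third, I convert this into a $Q'$ bound. Since $Q=\operatorname{im}B$ carries the quotient norm (or an equivalent one, by the bounded-inverse/open mapping theorem with $B$ surjective onto $Q$), there is a continuous-level inf--sup
\begin{equation*}
\|\mu\|_{Q'}\lesssim \sup_{v\in V}\frac{\langle B'\mu,v\rangle}{\|v\|_V}.
\end{equation*}
Applying this with $\mu=\overline\lambda_h^\ell-\lambda^*$, dividing by $\|v\|_V$, and taking the supremum gives the claimed bound, with the consistency term $\sup_v \langle B'\lambda^*, v-\Pi_h v\rangle/\|v\|_V$ (up to sign, which the supremum over $\pm v$ absorbs).

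The main obstacle is justifying this continuous inf--sup with a constant independent of $h$. It rests on the standing assumption that $B\colon V\to Q$ is bounded and onto $Q$, with $Q$ normed so that $B$ is open. I would state this explicitly, taking the quotient norm on $Q$ so that the constant is one. Everything else is bookkeeping of the averaged equations, and $\|\Pi_h\|$ is absorbed into $\lesssim$.
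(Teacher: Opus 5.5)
Your proposal is correct and is essentially the argument the paper defers to (\cite[Lemma 3.4]{aprioriPG} with minor modifications): average \eqref{eq:lvpp_g_0}, subtract \eqref{eq:def_lambda*}, use the Fortin property to replace $v$ by $\Pi_h v$ in the discrete pairing, and close with the continuous inf--sup condition, where the only change from the symmetric case is the continuity constant $C_{\mathrm{bnd}}+c_1$. Delete the aborted ``second step'' that defines $\lambda_h$ via the undefined $\Pi_h^\dagger$, since it plays no role; you are also right to work from \eqref{eq:discrete_Lagrange}, because the closed form $(\psi_h^\ell-\psi_h^0)/\sum_{k}\alpha_k$ in \eqref{eq:weighted_averages} has the opposite sign.
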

\begin{proof}
The proof follows from minor modifications to \cite[Lemma 3.4]{aprioriPG}. We omit the details for brevity.
\end{proof}
For deriving a priori rates, we consider the space $V \subset H^1(\Omega)$. Note that vector-valued spaces $V \subset H^1(\Omega)^d$ are treated similarly. We require the following assumption on the Fortin map $\Pi_h$.
\begin{assumption}\label{assum:fortin}
    Assume that the Fortin operator satisfying \eqref{eq:fortin_map} is stable in the sense that
\begin{subequations}
\label{eq:stability_fortin_general}
\begin{align}
\|\Pi_h v \|_{L^2(\Omega)} & \lesssim \|v\|_{L^2(\Omega)} + h \|\nabla v\|_{L^2(\Omega)}, \label{eq:fortin_stability_assump_L2} \\
\|\nabla (\Pi_h v)\|_{L^2(\Omega)} &  \lesssim \|\nabla v\|_{L^2(\Omega)} , \label{eq:fortin_stability_assump_H1}
\end{align}
\end{subequations}
for all $v \in V$.  Further, assume the following approximation property:
For $0 \leq s \leq 1$ and $w \in H^{1+s}(\Omega) \cap V$, assume that
\begin{align}
    \| w - \Pi_h w \|_{L^2(\Omega)} + h \|\nabla (w - \Pi_hw)\|_{L^2(\Omega)}
    & \lesssim
    h^{1+s} | w |_{H^{1+s}(\Omega)},
 \end{align}
\end{assumption}
To handle the last term in \eqref{eq:best_approx}, we require the following assumption.
\begin{assumption}\label{assum:enrich}
Assume that there exists a reconstruction operator $\mathcal{E}_h: V + V_h\rightarrow V$ such that
\[
\mathcal{E}_h u_h^\ell \in K ~\fa \ell \geq 1.
\]
In addition, assume that $\mathcal{E}_h$ is affine linear: In particular,
\begin{equation}
\label{eq:EnrichingMapDecomposition}
    \mathcal{E}_h w = \mathcal{C}_h w + \varepsilon,
\end{equation}
where $\mathcal{C}_h: V +  V_h\rightarrow V$  is linear quasi-interpolant and $\varepsilon \in V$. For $s, t \in [0,1]$, assume that
\begin{subequations}
\label{eq:map_rates}
\begin{align}
      \| w - \mathcal{C}_h w \|_{L^2(\Omega)} + h  \|\nabla (w - \mathcal{C}_h w)\|_{L^2(\Omega)}
    &  \lesssim
    h^{1+s} | w |_{H^{1+s}(\Omega)} ,   \label{eq:map_Ch_rates} \\
   \|\varepsilon \|_{H^t(\Omega)} & \lesssim h^{1+s-t}.\label{eq:map_Eh_rates_r}
    \end{align}
\end{subequations}
\end{assumption}
\begin{remark}[On the validity of \Cref{assum:fortin} and \Cref{assum:enrich}] \label{remark:examples_enriching_fortin} \normalfont
Verifying these assumptions is problem-specific and depends on the choice of $V_h \times W_h$. Here, we provide examples for which these assumptions hold.
\begin{enumerate}
    \item The solution $u$ satisfies $u \geq \phi$ a.e. in $\Omega$; i.e., the constraint set $K$ is given in \eqref{eq:global_obstacle}. Note that, in this case, $B = \operatorname{Id}$, $\Omega_d = \Omega$ and a suitable choice for $\mathcal{R}$ is $\mathcal{R}(u) = (u-\phi)\ln(u-\phi) - (u-\phi)$ leading to $\nabla \mathcal{R}^*(\psi) = \exp(\psi) + \phi$. To date, we have verified that the following two choices of finite element pairs satisfy \Cref{assum:fortin} and \Cref{assum:enrich} \cite{aprioriPG}:
    \begin{itemize}
        \item $(\mathbb{P}_1\text{\normalfont-bubble}, \mathbb{P}_{0}\text{\normalfont-broken})$ pair, see \cite[Appendix B]{keith2023proximal} for more details.
\item $( \mathbb{P}_1(\mathcal{T}_h) \cap H^1_0(\Omega), \mathbb{P}_1(\mathcal{T}_h) \cap H^1_0(\Omega))$, i.e., continuous Lagrange elements.       \end{itemize}
\item The solution $u$ satisfies $u \geq \phi$ a.e. on $\Gamma_{\mathsf{S}}\subset \partial\Omega$; i.e., the constraint set $K$ is given by \eqref{eq:constraint_semi_perm}. This setting is that of \Cref{example:semi_permeable} and the Signorini problem where $B = \operatorname{tr}$ and $\Omega_d = \Gamma_{\mathsf{S}}$. Here too, a suitable choice is $\nabla \mathcal{R}^*(\psi) = \exp(\psi) + \phi$.
 \begin{itemize}
    \item  $(\mathbb{P}_1(\mathcal{T}_h) \cap H^1_0(\Omega) , \mathbb{P}_1(\Gamma_\mathsf S) \cap H^1_0(\Gamma_\mathsf S))$, i.e., continuous Lagrange elements on $\Omega$ and on $\Gamma_\mathsf S$, see \cite{aprioriPG}.
    \end{itemize}
\end{enumerate}
\end{remark}
 \begin{theorem}[A priori error estimate] \label{thm:error_rate_conforming}
 Let \Cref{assum:fortin} and \Cref{assum:enrich}
  hold. Assume that $u^* \in H^{1+s}(\Omega)$ and $B'\lambda^* \in H^{r-1}(\Omega)$ for some $r,s \in (0,1]$. Let $\|\psi_h^0\|_{L^{\infty}(\Omega_d;\mathbb R^m)} \leq c$ for a positive constant $c$ independent of $h$.  Then,
the following estimate holds for all $\ell \geq 1$:
\begin{align} \label{eq:rate_general}
 \|\overline{u}_h^\ell -  u^*\|_{H^1(\Omega)}^2  + \|\overline{\lambda}_h^\ell - \lambda^*\|_{Q'}^2 \lesssim  \frac{1}{\sum_{k=1}^{\ell} \alpha_k} + h^{2 \min\{ r,s\}}.
\end{align}
where the hidden constant depends on the true solution $u^*$ but is independent of $\ell, h $ and $\{\alpha_k\} $.
\end{theorem}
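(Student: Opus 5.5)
The plan is to combine \Cref{thm:best_approximation} with \Cref{lemma:dual_variable_best} and bound each term on the right-hand side of \eqref{eq:best_approx} separately. The primal bound comes first, and the dual bound then follows by adding $\|\overline u_h^\ell-u^*\|_V$ and one consistency term.

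\textbf{Step 1: the Bregman term.} Since $\|\psi_h^0\|_{L^\infty}\le c$, we have $o_h^0=\nabla\mathcal R^*(\psi_h^0)$ and $\nabla\mathcal R(o_h^0)=\psi_h^0$, both bounded. Then
\[
\mathcal D(o^*,o_h^0)=\mathcal R(o^*)-\mathcal R(o_h^0)-\psi_h^0(o^*-o_h^0).
\]
Each piece is integrable with a bound independent of $h$, because $o^*=Bu^*\in L^2(\Omega_d)$ and $\mathcal R(o^*)$ is integrable for the entropy-type choices (for instance $x\log x$ with $x\ge 0$ is controlled by $x^2$). Hence the first term in \eqref{eq:best_approx} is $\lesssim 1/\sum_k\alpha_k$.

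\textbf{Step 2: the Fortin terms.} By \Cref{assum:fortin}, $\|\Pi_hu^*-u^*\|_{H^1}\lesssim h^s|u^*|_{H^{1+s}}$, so that term contributes $h^{2s}$. For $|\langle B'\lambda^*,\Pi_hu^*-u^*\rangle|$, I would use duality between $H^{r-1}$ and $H^{1-r}$ and interpolate the Fortin estimates:
\[
\|u^*-\Pi_hu^*\|_{H^{1-r}}\lesssim \|u^*-\Pi_hu^*\|_{L^2}^{r}\,\|u^*-\Pi_hu^*\|_{H^1}^{1-r}\lesssim h^{r+s}.
\]
This gives a bound $\lesssim h^{r+s}\le h^{2\min\{r,s\}}$.

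\textbf{Step 3: the consistency term.} This term, $\inf_{v\in K}|\langle B'\lambda^*,v-\overline u_h^\ell\rangle|$, is the main obstacle. I would choose $v=\mathcal E_h\overline u_h^\ell$. It lies in $K$ because $\mathcal E_h$ is affine, each $\mathcal E_hu_h^k\in K$, and $K$ is convex, so the convex combination stays in $K$. Then $v-\overline u_h^\ell=(\mathcal C_h-I)\overline u_h^\ell+\varepsilon$. The term from $\varepsilon$ is bounded by
\[
\|B'\lambda^*\|_{H^{r-1}}\,\|\varepsilon\|_{H^{1-r}}\lesssim h^{r+s}.
\]
For the other term, write $\overline u_h^\ell=u^*+e$ with $e=\overline u_h^\ell-u^*$.
\begin{itemize}
\item The $u^*$ part: $\langle B'\lambda^*,(\mathcal C_h-I)u^*\rangle\lesssim h^{r+s}$, by interpolating \eqref{eq:map_Ch_rates} as in Step~2.
\item The $e$ part: the low-order case of \eqref{eq:map_Ch_rates} (taking $s=0$) gives $\|(I-\mathcal C_h)e\|_{H^{1-r}}\lesssim h^{r}\|e\|_{H^1}$. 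Hence this contribution is $\lesssim h^r\|e\|_{H^1}\le \delta\|e\|_{H^1}^2+C_\delta h^{2r}$.
\end{itemize}
Choosing $\delta$ small (say $\delta<C_{\mathrm{coerc}}/8$) lets the $e$ contribution be absorbed into the left-hand side of \eqref{eq:best_approx}. This yields the primal estimate $\|e\|_{H^1}^2\lesssim 1/\sum\alpha_k+h^{2\min\{r,s\}}$. The delicate point is whether the interpolation to $H^{1-r}$ actually holds for $\mathcal C_h$ on $e$, since $e\notin V_h$. If it fails, I would instead apply $\mathcal C_h$ to $V_h$ functions and use the $H^1$-stability of $\mathcal C_h$ together with local approximation on $V_h+V$.

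\textbf{Step 4: the dual variable.} In \Cref{lemma:dual_variable_best}, bound
\[
\langle B'\lambda^*,v-\Pi_hv\rangle\le\|B'\lambda^*\|_{H^{r-1}}\|v-\Pi_hv\|_{H^{1-r}}\lesssim h^{r}\|v\|_{H^1},
\]
again by interpolating \Cref{assum:fortin} with $s=0$. Squaring this and combining with the primal estimate gives \eqref{eq:rate_general}.
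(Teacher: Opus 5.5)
Your proposal is correct and follows essentially the paper's proof: you bound the terms of \eqref{eq:best_approx} using the interpolated Fortin estimate $\|u^*-\Pi_h u^*\|_{H^{1-r}(\Omega)}\lesssim h^{r+s}$, take $v=\mathcal{E}_h\overline u_h^\ell$ with the split $\overline u_h^\ell=u^*+e$, absorb the $h^r\|e\|_{H^1(\Omega)}$ term by Young's inequality, and obtain the dual bound from \Cref{lemma:dual_variable_best}. The worry you raise in Step~3 is unnecessary, since $V_h\subset V$ gives $e\in V$ and the $s=0$ case of \eqref{eq:map_Ch_rates} applies directly; your arguments that $\mathcal{E}_h\overline u_h^\ell\in K$ (affinity plus convexity) and that the Bregman term is bounded independently of $h$ supply details the paper leaves implicit.
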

\begin{proof} It suffices to show the stated bound on $\|\overline{u}_h^\ell -  u^*\|_{H^1(\Omega)}$. The bound on the second term of \eqref{eq:rate_general} then follows from \Cref{lemma:dual_variable_best} and  \Cref{assum:fortin}.
We proceed to bound the last three terms in \eqref{eq:best_approx}.
From \Cref{assum:fortin}, it follows that
\begin{align}
\|u^* - \Pi_h u^* \|_{H^1(\Omega)}^2  \lesssim  h^{2s} |u^*|^2_{H^{1+s}(\Omega)}.  \label{eq:bound_first_term_err}
\end{align}
For the second term, we write
\begin{align*}
    \langle B' \lambda^*,  \Pi_h u^* - u^* \rangle   & \leq  \|B'\lambda^*\|_{H^{r-1}(\Omega)} \|\Pi_h u^* - u^*\|_{H^{1-r}(\Omega)} \leq Ch^{r+s} \| B'\lambda^*\|_{H^{r-1}(\Omega)}  \|u^*\|_{H^{1+s}(\Omega)},
\end{align*}
where we used \Cref{assum:fortin} along with estimates resulting from space interpolation between $L^2(\Omega)$ and $H^1(\Omega)$. For the last term, we select $v_h = \mathcal{E}_h \overline{u}_h^\ell$ and we bound
\begin{align}
| \langle B'\lambda^*, \mathcal{E}_h \overline{u}_h^\ell -\overline{u}_h^\ell  \rangle|& \leq \|B' \lambda^*\|_{H^{r-1}(\Omega)}   \|\overline{u}_h^\ell - \mathcal{E}_h \overline{u}_h^\ell\|_{H^{1-r}(\Omega)} \\
& \leq\|B' \lambda^*\|_{H^{r-1}(\Omega)}  \Big( \|(\overline{u}_h^\ell - u^*) - \mathcal{C}_h (\overline{u}_h^\ell  -  u^* )\|_{H^{1-r}(\Omega)}  \nonumber \\ & \quad   + \|  u^* - \mathcal{C}_h u^*\|_{H^{1-r}(\Omega)} + \|\epsilon\|_{H^{1-r}(\Omega)} \Big)  \nonumber \\
& \leq c \|B'\lambda^*\|_{H^{r-1}(\Omega)} ( h^r\| \overline{u}_h^\ell -  u^*\|_{H^1(\Omega)} + h^{r+s} (\|u^*\|_{H^{1+s}(\Omega)} +  1 ))\nonumber .
\end{align}
Collecting the above and applying Young's inequality yields the result.
\end{proof}

\section{The hybridizable first-order system proximal Galerkin method}\label{sec:FOSPG}
We now introduce and analyze a \textit{nonconforming} proximal Galerkin method for the non-symmetric VIs, see  \eqref{eq:general_VI}.
In particular, we study a first-order system reformulation and a hybrid mixed method with upwinding in the same spirit as \cite{egger2010hybrid, hybridMixedProx}. Note that in \cite{hybridMixedProx}, we presented FOSPG for symmetric VIs only. Our main motivation for considering this method is that it is more robust for convection-dominated problems than the conforming approach.

We focus on the following non-symmetric VI corresponding to \eqref{eq:general_VI}. Find $u \in K$ such that
\begin{subequations}\label{eq:global_obstacle}
\begin{equation}
(\kappa \nabla u , \nabla(v-u) + (\beta \cdot \nabla u, v - u) \geq (f, v-u) ~\fa v \in K,
\end{equation}
where $K$ is the closed and convex set given by
\begin{align}
K  &= \{v \in H^1_0(\Omega) \mid v \geq \phi \text{ a.e.\ in  }  \Omega \}.
\end{align}
\end{subequations}
Here, $f \in L^2(\Omega)$ and $\phi \in H^1(\Omega) \cap C(\overline{\Omega})$ with $\phi \vert_{\partial \Omega} \leq 0$. The first-order system PG (FOSPG) method for  \eqref{eq:global_obstacle} is given in \Cref{alg:FOSPG_1}.
For constraints on parts of the boundary, such as found in \Cref{example:semi_permeable}, we present the FOSPG method in \Cref{sec:semi_permeable} yet reserve its analysis for future work.
For simplicity, we hereinafter assume that the vector field $\beta$ has continuous normal components along element interfaces and that $\nabla \cdot \beta = 0 $.

\subsection{Preliminaries}
We consider the following finite element spaces:
\begin{subequations}
  \label{fes}
  \begin{align}
    \bm  \Sigma_h^p = & \;\left\{
    \mathbf{r}_h\in [L^2(\Omega)]^n: \quad \mathbf{r}_h{}_{|_{T}}
    \in \mathrm{RT}_p( T), \; \forall T \in \mathcal{T}_h
    \right\},                     \\
    V_h^p =           & \;\left\{
    {v}_h\in L^2(\Omega): \quad \quad {v}_h{}_{|_{T}}
    \in \mathbb{P}_p( T),\; \forall T \in \mathcal{T}_h
    \right\}.
  \end{align}
\end{subequations}
The jump of $v$ on a face $F \in \Gamma_h$ is defined as
\[
  [v_h]_{\vert_F} = v_h\vert_{T_F^1} - v_h \vert_{T_F^2},  \]
where $F =  \partial T_{F}^1 \cap \partial T_{F}^2$ and the normal $\bm n_F$ is chosen to point from $T_F^1$ to $T_F^2$. This choice is arbitrary but fixed. If $F \subset \partial \Omega$, then $[v_h]_{\vert_F}$ is taken as the single valued trace of $v_h$. We drop the subscript ``$\vert_{F}$" to simplify notation. The jumps of vector-valued functions are defined similarly.
Consider the following space of polynomials defined locally on each facet $F \in \Gamma_h$
\begin{align}    \label{fes_facet}
  M_{h,g}^p = & \;\bigg\{
  {\mu} \in L^2(\Gamma_h): \, {\mu}{}_{|_{F}}
\in \mathbb{P}_p(F) , \; \forall F \in \Gamma_h,
         \;\mu{}_{|_{F}} = \hat \pi_h g{}_{|_{F}},\; \forall F \subset \partial \Omega
  \bigg\},
\end{align}
where $\hat \pi_h$ is the $L^2$-projection operator onto    the polynomial space $\mathbb{P}_p(F)$.
We define the standard discontinuous Galerkin (DG) norm
\[
  \|v\|_{\mathrm{DG}}^2 = \sum_{T \in \mathcal{T}_h} \|\kappa^{1/2} \nabla v\|_{L^2(T)}^2 + \sum_{F \in \Gamma_h} h_F^{-1} \|[v]\|^2_{L^2(F)}  \quad \forall v \in H^1(\mathcal{T}_h).
\]
The above defines a norm since $\Gamma_h$ contains boundary facets. In fact, the following Poincar\'e inequality \cite[Lemma 3.2]{lasis2007hp} holds for all $r \in [1,6]$ when $d = 3$ and for all $r \in [1,\infty)$ when $d=2$,
\begin{equation}\label{eq:Poincare}
  \|v_h\|_{L^r(\Omega)} \lesssim \|v_h\|_{\mathrm{DG}} \quad \forall v_h \in V_h^p.
\end{equation}
For $\bm r \in L^2(\Omega)^n, v \in H^1(\mathcal{T}_h)$ and $\hat v \in L^2(\Gamma_h)$, define
\begin{equation} \label{eq:def_triple_norm}
  \vvvert(\bm r, v, \hat v)\vvvert^2 = \|\kappa^{-1/2} \bm r\|^2_{L^2(\Omega)} + \|\kappa^{1/2} \nabla_h v\|^2_{L^2(\Omega)} + \sum_{T \in \mathcal{T}_h} h_T^{-1} \| v - \hat v\|^2_{L^2(\partial T)} .
\end{equation}
We remark that for any $v \in H^1(\mathcal{T}_h)$, $\bm r \in L^2(\Omega)^n$ and $\hat{v} \in L^2(\Gamma_h)$ with $ \hat{v}= 0 $ on $\partial \Omega$, we have from a triangle inequality and shape regularity that
\begin{equation}
  \|v\|_{\mathrm{DG}} \lesssim \vvvert(\bm r , v , \hat{v}) \vvvert. \label{eq:dg_to_triple}
\end{equation}
Further, we define the following dual norm:
\begin{align}
  \|v\|_{H^1(\mathcal{T}_h)^*} = \sup_{w \in H^1(\mathcal{T}_h) , w \neq 0} \frac{(v, w)}{\|w\|_{\mathrm{DG}}} \quad \forall v \in L^2(\Omega). \label{eq:negative_norm}
\end{align}
The $L^2$-projection onto the space $V_h^p$ is denoted by  $\Pi_h: H^1(\mathcal{T}_h) \rightarrow V_h^p$. We have the following properties:
\begin{equation}
  (\Pi_h v, p_h) = (v, p_h) \,\,\,\, \forall p_h \in V_h^p,  \qquad \|\Pi_h v\|_{\mathrm{DG}} \lesssim \|v\|_{\mathrm{DG}}.  \label{eq:local_L2_project}
\end{equation}
The $L^2$ projection allows us to obtain the following bound that will be useful
\begin{align} \label{eq:negative_norm_proj}
  \|\psi_h\|_{H^1(\mathcal{T}_h)^*} =  \sup_{w \in H^1(\mathcal{T}_h)} \frac{(\psi_h, w)}{\|w\|_{\mathrm{DG}}} =\sup_{w \in H^1(\mathcal{T}_h)} \frac{(\psi_h, \Pi_h w)}{\|w\|_{\mathrm{DG}}}  \lesssim \sup_{w \in H^1(\mathcal{T}_h)} \frac{|(\psi_h, \Pi_h w)|}{\|\Pi_h w\|_{\mathrm{DG}}} .
\end{align}
We also make use of the following lifting operator. For a given $(u_h  ,\hat u_h) \in V_h \times M_{h,0}^p$, define the lifting  $\bm L_h(u_h - \hat u_h) \in \bm \Sigma_h^p$ such that locally $\bm L_h(u_h  - \hat u_h ){}_{\vert T} \in \mathrm{RT}_{p}(T)^n$ solves
\begin{equation}
  \int_T \bm L_h (u_h - \hat u_h) \cdot \bm r_h = \int_{\partial T} (u_h {}_{\vert T} - \hat u_h) \bm r_h \cdot \bm n, \quad \forall \bm r_h \in \mathrm{RT}_{p}(T)^n.  \label{eq:lift}
\end{equation}
Testing \eqref{eq:lift} with $\bm r_h = \bm L_h (u_h - \hat u_h) $, applying the Cauchy--Schwarz and  the trace inequality $\|\bm r_h \cdot \bm n \|_{L^2(\partial T)} \lesssim h_T^{-1/2} \|\bm r_h\|_{L^2(T)}$, one readily derives that
\begin{equation}
  \|\bm L_h (u_h - \hat u_h) \|_{L^2(T)} \lesssim h_T^{-1/2} \|u_h - \hat u_h\|_{L^2(\partial T)}, \quad \forall T \in \mathcal{T}_h.
\end{equation}
\subsection{FOSPG for the nonsymmetric VI}

To present the FOSPG method, one introduces the flux variable $\bm q = - \kappa \nabla u$, rewrites \eqref{eq:global_obstacle} as a first-order system, and applies the well-known DG and hybridization machinery. We refer to \cite{egger2010hybrid} for more details on the derivation of the forms given below.
Define the following bilinear form, $\mathcal{B}: \bm{\Sigma}_h^p \times H^1(\mathcal{T}_h) \times L^2(\Gamma_h) \rightarrow \R $
\begin{equation} \label{eq:def_form_Bh}
  \mathcal{B}(\bm q_h, (v, \hat v)) = ( \bm q_h, \nabla_h v) - (v - \hat v, \bm q_h \cdot \bm n)_{\partial \mathcal T_h}.
\end{equation}
Considering the definition of $\mathcal{B}$ in \eqref{eq:def_form_Bh} and the definition of $\bm L_h(u_h - \hat u_h)$ \eqref{eq:lift}, it is useful to  note that for any $(\bm w_h , v_h, \hat v_h) \in \bm \Sigma_h^p \times V_h^p \times M_{h,g}^p$ ,
\begin{equation} \label{eq:link_Bh_lift}
  \mathcal{B}(\bm w_h, (v_h, \hat v_h) ) = (\bm w_h, \nabla_h v_h) - (\bm L_h (v_h - \hat v_h) , \bm w_h).
\end{equation}
We define the following form pertaining the hybrid mixed discretization of $-\nabla \cdot (\kappa \nabla \cdot)$ operator:  $\mathcal{A}_{\mathsf{D}}: (\bm{\Sigma}_h^p \times V_h^p \times M_{h,g}^p)^2 \rightarrow \R$:
\begin{equation} \label{eq:def_form_A}
 \mathcal{A}_{\mathsf{D}}((\bm q_h, u_h, \hat u_h), (\bm r_h, v_h, \hat v_h)) =  (\kappa^{-1} \bm q_h, \bm r_h) + \mathcal{B} (\bm r_h, (u_h, \hat u_h)) -   \mathcal{B}(\bm q_h, (v_h, \hat v_h)).
\end{equation}
The following continuity property follows from standard arguments
\begin{align} \label{eq:continuity_diff}
\mathcal{A}_{\mathsf{D}}((\bm q_h, u_h, \hat u_h), (\bm v_h, v_h, \hat v_h)) & \lesssim
\vvvert (\bm q_h, u_h, \hat u_h) \vvvert \vvvert (\bm v_h, v_h, \hat v_h) \vvvert.
\end{align}
We note that for a given $(\bm q_h, u_h, \hat u_h) \in \bm \Sigma_h^p \times V_h^p \times M_{h,0}^p$, there exists $\bm r_h \in \bm \Sigma_h^p$ such that  \cite{egger2010hybrid}
\begin{equation} \label{eq:inf_sup_HDG}
\mathcal{A}_\mathsf{D}((\bm q_h, u_h, \hat u_h), (\bm r_h, u_h, \hat u_h)) \gtrsim \vvvert (\bm q_h, u_h, \hat u_h) \vvvert^2.
\end{equation}

The advection term is discretized with the following form $ \mathcal{A}_\mathsf{C}: (V_h^p \times M_{h,g}^p)^2 \rightarrow \R$ that incorporates upwind stabilization:
\begin{equation}
  \mathcal{A}_\mathsf{C}((u_h, \hat u_h), (v_h, \hat v_h))
  =\;
  - (\beta u_h, \nabla_h v_h) + (\beta \cdot \bm n u_h^{\mathrm{up}}, (v_h - \hat v_h))_{\partial \mathcal{T}_h},
\end{equation}
where
\begin{align}
  u_h^\mathrm{up} =
\begin{cases}
  \hat u_h & \text{if }\beta \cdot \bm n < 0, \\
    u_h  & \mathrm{otherwise}.
\end{cases}
\end{align}
From \cite[Proposition 3.5]{egger2010hybrid}, we have that for any $(u_h, \hat u_h) \in V_h^p \times M_{h,g}^p$,
\begin{align}
\label{eq:nonnegativity_convection}
 \mathcal{A}_\mathsf{C}((u_h, \hat u_h), (u_h, \hat u_h)) \geq 0.
\end{align}
Finally, the following continuity estimate follow from standard arguments
\begin{align}
\mathcal{A}_\mathsf{C}((u_h, \hat u_h), (v_h, \hat v_h)) \lesssim (\|u_h\|_{L^2(\Omega)} + \vvvert (\bm 0, u_h, \hat u_h) \vvvert) (\|v_h\|_{L^2(\Omega)} + \vvvert (\bm 0, v_h, \hat v_h) \vvvert),
\label{eq:continuity_convection}
\end{align}
where the hidden constant depends on $\|\beta\|_{L^\infty(\Omega)}$ and $\|\beta \cdot \bm n\|_{L^\infty(\partial \mathcal{T}_h)}$.
\begin{algorithm}[htb]
  \caption{The Hybridized First Order System Proximal Galerkin Method for \eqref{eq:global_obstacle}}
  \label{alg:FOSPG_1}
  \begin{algorithmic}[1]\label{alg:main_alg_disc_FOSPG}
    \State \textbf{input:}  A discrete latent solution guess $\psi_h^0 \in V^q_h$ with  $q \leq p$ and a sequence of positive step sizes $\{\alpha_k\}$.
    \State Initialize \(k = 1\).
    \State \textbf{repeat}
    \State \quad
    Find $(\bm q^{k}_h, u_h^{k} , \hat u^{k}_h) \in \bm \Sigma_h^p  \times V_h^p \times  M_{h,0}^p$ and $\psi_h^{k} \in V_h^q$ such that
    \begin{subequations}
      \label{eq:nonlinear_subproblem}
      \begin{alignat}{2}
        \mathcal{A}_\mathsf{D}((\bm q^k_h, u^k_h, \hat u^k_h), (\bm r_h, v_h, \hat v_h))  + \mathcal{A}_\mathsf{C}((u^k_h, \hat u^k_h), (v_h, \hat v_h)) +   \frac{1}{\alpha_k}(\psi_h^{k} - \psi_h^{k-1} , v_h) & =  (f,v_h) ,  \label{eq:nonlinear_subproblem_0} \\
        (u_h^{k}, w_h)  - (\nabla \mathcal{R}^*(\psi_h^{k}),w_h)                                               & = 0,  \label{eq:nonlinear_subproblem_1}
      \end{alignat}
    \end{subequations}
    for all $ (\bm r_h , v_h, \hat v_h) \in  \Sigma_h^p \times V_h^p \times M_{h,0}^p $ and  $w_h \in V^q_h$.
    \State \quad Assign \(k \gets k + 1\).
    \State \textbf{until} a convergence test is satisfied.
  \end{algorithmic}
\end{algorithm}
\begin{remark}[Mixed boundary conditions]
In the case of mixed Neumann ($\nabla u \cdot n = 0 $ on $\Gamma_{\mathsf N}$) and Dirichlet type boundary conditions $(u = 0$ on $\Gamma_{\mathsf{D}}$), with $\Gamma_\mathsf{D} \cup \Gamma_{\mathsf N} = \partial \Omega$, the following modifications are required on the FOSPG scheme of \Cref{alg:FOSPG_1}.  The space $M_{h,0}^p$ of \eqref{fes_facet} and the discretization of the convection term are  modified to
\begin{align*}
M_{h,0}^p  & =  \;\bigg\{
  {\mu}\in L^2(\Gamma_h): \, {\mu}{}_{|_{F}}
\in \mathbb{P}_p( F) , \; \forall F \in \Gamma_h,
  \;\mu{}_{|_{F}} = 0,\; \forall F \subset \Gamma_{\mathsf{D}}
  \bigg\}, \\
  \mathcal{A}_\mathsf{C}((u_h, \hat u_h), (v_h, \hat v_h))
 &  =\;
  - (\beta u_h, \nabla_h v_h)
   + (u_h^{\mathrm{up}}, v_h)_{\partial\mathcal{T}_h}  - (u_h^{\mathrm{up}}, \hat v_h)_{\partial\mathcal{T}_h \backslash \Gamma_{\mathsf{N}}}.
  \end{align*}
\end{remark}
The goal of the \Cref{lemma:diff_only_existence} is to show existence and uniqueness for the case of zero convection, which will be essential in proving well-posedness of \eqref{eq:nonlinear_subproblem} in \Cref{thm:FOSPG_existence}.
\begin{lemma} \label{lemma:diff_only_existence}
  For any bounded linear functional $\ell: V_h^p \times M_{h,0}^p \rightarrow \R$ and $\alpha>0$, there exists a unique solution to the following problem: Find $(\bm q_h, u_h , \hat u_h) \in \bm \Sigma_h^p  \times V_h^p \times  M_{h,0}^p$ and $\psi_h \in V_h^q$ such that
\begin{subequations}\label{eq:nonlinear_subproblem_diff}
    \begin{alignat}{2}
     \mathcal{A}_\mathsf{D}((\bm q_h, u_h, \hat u_h), (\bm r_h, v_h, \hat v_h))  +   \frac{1}{\alpha}(\psi_h , v_h) & =  \ell(v_h, \hat v_h) , \label{eq:nonlinear_subproblem_diff_0}   \\
        (u_h, w_h)  - (\nabla \mathcal{R}^*(\psi_h),w_h)              & = 0,  \label{eq:nonlinear_subproblem_diff_1}
    \end{alignat}
  \end{subequations}
  for all $ (\bm r_h, v_h, \hat v_h) \in \bm \Sigma_h^p  \times V_h^p \times M_{h,0}^p $ and    $w_h \in V_h^q$.
\end{lemma}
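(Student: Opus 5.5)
Since there is no convection term, the natural route is to recast \eqref{eq:nonlinear_subproblem_diff} as the optimality system of a strictly convex minimization problem. The strategy is to eliminate the flux variable, show that what remains is the first-order condition of a coercive, strictly convex functional, and invoke existence and uniqueness of its minimizer. This is the same structure as the symmetric case in \cite{hybridMixedProx} and \cite[Theorem~3.1]{aprioriPG}.

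\textbf{Step 1: eliminate $\bm q_h$.} Test \eqref{eq:nonlinear_subproblem_diff_0} with $(\bm r_h,0,0)$. Using \eqref{eq:link_Bh_lift}, this gives
\[
(\kappa^{-1}\bm q_h,\bm r_h) = -(\bm r_h,\nabla_h u_h - \bm L_h(u_h-\hat u_h)) \quad \forall \bm r_h\in\bm\Sigma_h^p .
\]
Hence $\bm q_h = -\kappa\,\mathcal{P}_h(\nabla_h u_h-\bm L_h(u_h-\hat u_h))$, where $\mathcal P_h$ is the $\kappa$-weighted $L^2$ projection onto $\bm\Sigma_h^p$ (if $\kappa$ is not piecewise constant, it is simply the linear map solving this Gram system). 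Thus $\bm q_h=\bm Q_h(u_h,\hat u_h)$ with $\bm Q_h$ linear. Substituting into the $(v_h,\hat v_h)$ part turns $-\mathcal B(\bm q_h,(v_h,\hat v_h))$ into a symmetric bilinear form
\[
a_h((u_h,\hat u_h),(v_h,\hat v_h)) = (\kappa^{-1}\bm Q_h(u_h,\hat u_h),\bm Q_h(v_h,\hat v_h)).
\]
Symmetry follows by testing the defining relation of $\bm Q_h(v_h,\hat v_h)$ with $\bm r_h=\bm Q_h(u_h,\hat u_h)$.

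\textbf{Step 2: coercivity of $a_h$ on $V_h^p\times M_{h,0}^p$.} This is the main technical point. I would use the inf-sup property \eqref{eq:inf_sup_HDG}. Since $\bm q_h$ is determined by $(u_h,\hat u_h)$, \eqref{eq:inf_sup_HDG} together with \eqref{eq:continuity_diff} yields
\[
a_h((u_h,\hat u_h),(u_h,\hat u_h)) \gtrsim \vvvert(\bm Q_h(u_h,\hat u_h),u_h,\hat u_h)\vvvert^2 .
\]
Since $\hat u_h=0$ on $\partial\Omega$, \eqref{eq:dg_to_triple} and \eqref{eq:Poincare} show that this controls $\|u_h\|_{\mathrm{DG}}^2$ and the facet term. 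It is therefore a norm on the finite-dimensional space $V_h^p\times M_{h,0}^p$. If the direct route via \eqref{eq:inf_sup_HDG} is awkward, the fallback is to recall from \cite{egger2010hybrid} that $\|\nabla_h u_h - \bm L_h(u_h-\hat u_h)\|$ is a norm on this space. Injectivity then follows by checking that it vanishes only when $u_h=\hat u_h=0$.

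\textbf{Step 3: the convex functional.} With the Step 2 reduction, \eqref{eq:nonlinear_subproblem_diff} is equivalent to the saddle point system of
\[
\min_{(u_h,\hat u_h)\in V_h^p\times M_{h,0}^p}\ \tfrac12 a_h((u_h,\hat u_h),(u_h,\hat u_h)) - \ell(u_h,\hat u_h) + \tfrac1\alpha (\mathcal R(\Pi^q_h u_h),1) .
\]
Here $\Pi_h^q$ is the $L^2$ projection onto $V_h^q$, and $\psi_h$ plays the role of $\nabla\mathcal R(\cdot)$ through \eqref{eq:nonlinear_subproblem_diff_1}. This matches exactly the symmetric setting of \cite{hybridMixedProx}.

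Alternatively, and more directly, I would define $\psi_h$ as the unknown and eliminate $(u_h,\hat u_h)$. Let $(u_h,\hat u_h)=S_h(\ell-\alpha^{-1}\psi_h)$ be the solution of the linear, uniquely solvable (by Step 2) problem. Then \eqref{eq:nonlinear_subproblem_diff_1} reads $G(\psi_h):=\alpha^{-1}\Pi_h^q S_h\psi_h+\Pi^q_h\nabla\mathcal R^*(\psi_h)=\Pi^q_h S_h\ell$ on $V_h^q$. The map $G$ is the gradient of the strictly convex functional
\[
J(\psi_h)=\tfrac1{2\alpha}(S_h\psi_h,\psi_h) + (\mathcal R^*(\psi_h),1) - (\Pi_h^q S_h\ell,\psi_h).
\]
Here $S_h$ is symmetric positive semidefinite when restricted to $V_h^q$-data by the symmetry of $a_h$, and $\mathcal R^*$ is strictly convex and $C^1$.

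Uniqueness of the minimizer follows from strict monotonicity of $G$. Existence needs coercivity of $J$ on the finite-dimensional space $V_h^q$, and this is the step I expect to be the real obstacle, since $\mathcal R^*$ (e.g.\ $\exp$) grows only one-sidedly. I would handle it via the constraint qualification: there is $\bar u\in$ interior of the constraint set, e.g.\ a function with $\bar u>\phi$ realizable in $V_h^q$. The argument is that for $|\psi_h|\to\infty$ either the $\mathcal R^*$ term grows superlinearly or the term $(\psi_h,\cdot)$ is eventually dominated. This is exactly as in \cite[Theorem~3.1]{aprioriPG}, which I would cite for this coercivity once the reduction is in place.

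Finally, $u_h,\hat u_h,\bm q_h$ are recovered from $\psi_h$ by the linear maps above, which gives existence and uniqueness of the full solution of \eqref{eq:nonlinear_subproblem_diff}.
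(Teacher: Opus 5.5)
Your Steps~1--2 match the paper's lifting reformulation. You eliminate $\bm q_h$ through a Gram system rather than the explicit formula $\bm q_h=-\kappa(\nabla_h u_h-\bm L_h(u_h-\hat u_h))$, which in general lies in $\bm\Sigma_h^p$ only for elementwise constant $\kappa$, so your version is slightly more general. You also get coercivity from \eqref{eq:inf_sup_HDG} instead of from the test flux $\bm\tau_h$ of \cite{egger2010hybrid}. That step is correct because $\mathcal A_{\mathsf D}((\bm Q_h(u_h,\hat u_h),u_h,\hat u_h),(\bm r_h,u_h,\hat u_h))=a_h((u_h,\hat u_h),(u_h,\hat u_h))$ for every $\bm r_h$: the $\bm r_h$-part vanishes by the definition of $\bm Q_h$, so \eqref{eq:continuity_diff} is not needed. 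The real difference is Step~3. The paper keeps $(u_h,\hat u_h)$ and $\psi_h$ together in a Lagrangian on $V_h^p\times M_{h,0}^p\times V_h^q$, proves the $h$-uniform inf-sup condition \eqref{eq:inf_sup_DG}, and defers to the saddle-point argument of \cite[Theorem~3.1]{aprioriPG}. You instead eliminate $(u_h,\hat u_h)$ and minimize a strictly convex functional of $\psi_h$ alone, which is more self-contained. The paper's route buys the $h$-uniform control of $\psi_h$ and $\overline\lambda_h^\ell$ in the negative norm that is reused later; for existence and uniqueness at fixed $h$ that is more than is needed.

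Two steps of your write-up need repair.

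First, the primal functional with $(\mathcal R(\Pi_h^q u_h),1)$ is not equivalent to \eqref{eq:nonlinear_subproblem_diff}. Its optimality condition produces $\nabla\mathcal R(\Pi_h^q u_h)$, which is generally not in $V_h^q$, whereas \eqref{eq:nonlinear_subproblem_diff_1} only says $\Pi_h^q u_h=\Pi_h^q\nabla\mathcal R^*(\psi_h)$ with $\psi_h\in V_h^q$. Drop that paragraph, or use the saddle function with $\varphi\in V_h^q$, and keep your dual formulation.

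Second, you misidentify what makes $J$ coercive. The interior-point argument cannot do it, because the linear coefficient $\Pi_h^q S_h\ell$ is arbitrary. For the entropy choice $\mathcal R^*(\psi)=e^{\psi}+\phi\psi$ and $\psi_h\equiv -t$, the non-quadratic part of $J$ equals $|\Omega|e^{-t}+t\int_\Omega(\Pi_h^q S_h\ell-\phi)\,\mathrm dx$. This tends to $-\infty$ whenever that integral is negative, so with mere semidefiniteness your argument fails. What actually works is that the quadratic part is positive \emph{definite} on $V_h^q$. If $(S_h\psi_h,\psi_h)=a_h(S_h\psi_h,S_h\psi_h)=0$, then $S_h\psi_h=0$, hence $(\psi_h,v_h)=0$ for all $v_h\in V_h^p$, and $V_h^q\subset V_h^p$ gives $\psi_h=0$. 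Combined with the affine minorant $\mathcal R^*(\psi)\ge\mathcal R^*(0)+\nabla\mathcal R^*(0)\psi$, this yields $J(\psi_h)\ge c_h\|\psi_h\|^2-C\|\psi_h\|-C$ on the finite-dimensional space $V_h^q$, with $c_h>0$ possibly $h$-dependent. Hence a minimizer exists, strict convexity gives uniqueness, and your linear recoveries of $(u_h,\hat u_h,\bm q_h)$ complete the proof. This definiteness is exactly the role \eqref{eq:inf_sup_DG} plays in the paper.
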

\begin{proof}
  \textit{Step 1} (Reformulation). We start with a reformulation of the problem using lifting operators. Assume that $(u_h, \hat u_h) \in V_h^p \times M_{h,0}^p$ and $\psi_h \in V_h^q$  uniquely solves the following:
\begin{subequations}\label{eq:reformulation}
    \begin{alignat}{2}\label{eq:reformulation_0}
      a_h((u_h,\hat u_h) , (v_h, \hat v_h)) + \frac{1}{\alpha} (\psi_h , v_h) & = \ell(v_h, \hat v_h)  && \quad \forall (v_h, \hat v_h) \in V_h^p \times M_{h,0}^p, \\
      (u_h, w_h)  - (\nabla \mathcal{R}^*(\psi_h),w_h)             & = 0, &&  \quad \forall w_h \in  V_h^q, \label{eq:reformulation_1}
    \end{alignat}
  \end{subequations}
  where $a_h$ is given by
  \begin{equation}\label{eq:def_a_lift}
    a_h((u_h, \hat u_h), (v_h, \hat v_h)) =   (\kappa (\nabla_h u_h - \bm L_h(u_h -\hat u_h )) , \nabla_h v_h - \bm L_h(v_h - \hat v_h)).
  \end{equation}
  Define  \begin{equation}
    \bm q_h =    -\kappa \nabla_h u_h + \kappa \bm L_h(u_h - \hat u_h). \label{eq:recovering_flux}
  \end{equation}
  Then,  $(\bm q_h , u_h ,\hat u_h) \in \bm \Sigma_h^p  \times V_h^p \times  M_{h,g}^p$ uniquely solves \eqref{eq:nonlinear_subproblem_diff}. To see this, first test $\eqref{eq:recovering_flux}$ with $\bm v_h \in \bm \Sigma_h^p$ and use \eqref{eq:lift} to recover that $\mathcal{A}_{\mathsf{D}}((\bm q_h, u_h,\hat u_h), (\bm r_h, 0 , 0)) = 0$  for all $\bm r_h \in \bm \Sigma_{h}^p$.  This observation along with substituting the definition of $\bm q_h$ \eqref{eq:recovering_flux} in \eqref{eq:def_a_lift} and using \eqref{eq:link_Bh_lift} and \eqref{eq:reformulation_0} yields \eqref{eq:nonlinear_subproblem_diff_0}. Further, from the monotonicity of $\nabla \mathcal{R}^*$, non-negativity of $\mathcal{A}_{\mathsf C}$ \eqref{eq:nonnegativity_convection}, and the inf--sup stability of $\mathcal{A}_{\mathsf{D}}$ \cite[Proposition 3.2]{egger2010hybrid}, one readily obtains that  \eqref{eq:nonlinear_subproblem_diff} has unique solutions.
  
  \textit{Step 2} (Existence and uniqueness of solutions).   Define $\mathcal{L}: V_h^p \times  M_{h,0}^p \times V_h^q \rightarrow \R$ as
  \begin{equation} \label{eq:lagrangian}
    \mathcal{L}( w , \hat w, \varphi ) =
    \frac12 a_h((w , \hat w ), (w,\hat w))
    - \ell(w,\hat w)
    + \frac1\alpha (w , \varphi)
    -    (\mathcal{R}^*(\varphi),1).
  \end{equation}
  We now show the existence of a saddle point to $\mathcal{L}$ which solves \eqref{eq:reformulation}.
                                              We first demonstrate that $a_h$ is coercive with respect to a norm on $V_h^p \times M_{h,0}^p$. From \cite[Lemma 3.1]{egger2010hybrid}, see also \cite[Lemma 3.1]{gao2018error} for a detailed proof, there exists $\bm{\tau}_h \in \bm \Sigma_h^p$ such that for any $T \in \mathcal{T}_h$ and for all $ \bm p \in \mathbb P_{p-1}(T)^n$ and $ q \in \mathbb P_p(\partial T), $
  \begin{align}
    \label{eq:def_test_infsup}
    (\bm \tau_h, \bm p)_T + ( \bm \tau_h \cdot\bm{n}, q )_{\partial T} &  = (\nabla_h u_h, \bm p)_{T}
    +
    ( h_T^{-1} (\hat u_h -  u_h) , q )_{\partial T}, \;\;   \|\bm \tau_h \|_{L^2(\Omega)} \lesssim \vvvert (\bm 0, u_h, \hat u_h) \vvvert
          \end{align}
  Now, observe that for a constant $C_\kappa$ depending on the diffusion coefficient $\kappa$, we have
  \begin{multline}
    \vvvert (\bm 0, u_h, \hat u_h) \vvvert^2 \leq C_\kappa   \mathcal{B}(\bm \tau_h, (u_h, \hat u_h))
    =  C_\kappa (\bm \tau_h, \nabla_h u_h - \bm L_h(u_h -\hat u_h) )\\
    \leq C_\kappa  \|\bm \tau_h\|_{L^2(\Omega)} \|\nabla_h u_h - \bm L_h(u_h -\hat u_h)\|_{L^2(\Omega)}\lesssim C_\kappa \vvvert (\bm 0, u_h, \hat u_h) \vvvert\|\nabla_h u_h - \bm L_h(u_h -\hat u_h)\|_{L^2(\Omega)}.
  \end{multline}
 From the above and \eqref{eq:def_a_lift}, it then follows that
  \begin{equation}
    a_h((u_h, \hat u_h),(u_h,\hat u_h)) = \|\kappa^{1/2} (\nabla_h u_h - \bm L_h(u_h -\hat u_h))\|^2_{L^2(\Omega)} \gtrsim \vvvert (\bm 0, u_h, \hat u_h) \vvvert^2.
  \end{equation}
 Therefore, since $\vvvert (\bm 0,\cdot, \cdot) \vvvert $ defines a norm on $V_h^p \times M_{h,0}^p$, we can conclude coercivity. Finally, we note that the following inf-sup condition holds:
 \begin{equation}
    \inf_{w \in V^q _h} \sup_{v \in V^p_h} \frac{ (v,w)}{\|v\|_{\mathrm{DG}} \|w\|_{H^1(\mathcal{T}_h)^*}}  \geq \beta, \label{eq:inf_sup_DG}
     \end{equation}
    for some $\beta > 0$.  This follows from using the definition \eqref{eq:negative_norm} and the $L^2$ projection onto $V^q_h$, see \eqref{eq:local_L2_project} as the Fortin map, see \cite[Lemma 26.9]{ern2021finite2}.  From here, one applies the arguments in \cite[Theorem 3.1]{aprioriPG} to conclude. We skip the details for brevity.
                                                         \end{proof}

\begin{theorem}[Existence and uniqueness of solutions to \Cref{alg:FOSPG_1}]\label{thm:FOSPG_existence}
  For each $k \geq 1$, there exists a unique solution to \eqref{eq:nonlinear_subproblem}.
\end{theorem}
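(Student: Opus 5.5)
We will mirror the continuation argument used for \Cref{thm:existence_conforming}, with \Cref{lemma:diff_only_existence} serving as the base case (it is the analogue of \cite[Theorem~3.1]{aprioriPG}). Drop the superscript $k$, set $\alpha=\alpha_k$, and define for $t\in[0,1]$ the family of problems obtained by replacing $\mathcal{A}_\mathsf{C}$ in \eqref{eq:nonlinear_subproblem_0} by $t\,\mathcal{A}_\mathsf{C}$, with a general right-hand side $\ell(v_h,\hat v_h)$. At $t=0$ this is exactly \eqref{eq:nonlinear_subproblem_diff}, which is uniquely solvable for every bounded $\ell$. The final choice is $t=1$ and $\ell(v_h,\hat v_h)=(f,v_h)+\frac1\alpha(\psi_h^{k-1},v_h)$.

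\emph{Continuation step.} Suppose the problem at level $\tau$ is uniquely solvable for all $\ell$. Fix $t\in[\tau,\tau+t_0]$ and $(p_h,\hat p_h)\in V_h^p\times M_{h,0}^p$. Solve the $\tau$-problem with right-hand side $\ell(v_h,\hat v_h)+(\tau-t)\mathcal{A}_\mathsf{C}((p_h,\hat p_h),(v_h,\hat v_h))$, and let $\Phi_t(p_h,\hat p_h):=(u_h,\hat u_h)$. A fixed point of $\Phi_t$ solves the $t$-problem. Conversely, any solution of the $t$-problem is a fixed point, so uniqueness follows from uniqueness of the fixed point.

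\emph{Contraction estimate.} Take two data $(p_h^i,\hat p_h^i)$ with solutions $(\bm q_h^i,u_h^i,\hat u_h^i,\psi_h^i)$, and subtract. Test the first equation with $(\bm q_h^1-\bm q_h^2,\,u_h^1-u_h^2,\,\hat u_h^1-\hat u_h^2)$ and the second with $w_h=(\psi_h^2-\psi_h^1)/\alpha$. The $\mathcal{B}$ terms of $\mathcal{A}_\mathsf{D}$ cancel by skew-symmetry, leaving $\|\kappa^{-1/2}\delta\bm q_h\|^2$. The $\psi$ coupling terms combine to $\frac1\alpha(\nabla\mathcal{R}^*(\psi_h^1)-\nabla\mathcal{R}^*(\psi_h^2),\psi_h^1-\psi_h^2)\ge0$, using $q\le p$ so that $w_h\in V_h^q\subset V_h^p$. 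The term $\tau\mathcal{A}_\mathsf{C}(\delta u,\delta u)$ is nonnegative by \eqref{eq:nonnegativity_convection}. It remains to control $\vvvert(\bm 0,\delta u_h,\delta\hat u_h)\vvvert$ and $\|\delta u_h\|_{L^2}$. Here $\bm q_h$ is exactly the flux recovered through the lifting, since testing with $(\bm r_h,0,0)$ gives \eqref{eq:recovering_flux}. Hence $\|\kappa^{-1/2}\delta\bm q_h\|^2=a_h(\delta u,\delta u)\gtrsim\vvvert(\bm 0,\delta u_h,\delta\hat u_h)\vvvert^2$ by the coercivity shown in the proof of \Cref{lemma:diff_only_existence}. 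Combining this with \eqref{eq:dg_to_triple} and \eqref{eq:Poincare} bounds the $L^2$ norm as well.

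Continuity \eqref{eq:continuity_convection} then gives
\[
\vvvert(\bm 0,\delta u_h,\delta\hat u_h)\vvvert^2\le C|t-\tau|\,\vvvert(\bm 0,\delta p_h,\delta\hat p_h)\vvvert\,\vvvert(\bm 0,\delta u_h,\delta\hat u_h)\vvvert,
\]
with $C$ depending only on $\kappa$, $\beta$, and shape regularity, and not on $\tau$. Choosing $t_0<1/C$ makes $\Phi_t$ a contraction in the norm $\vvvert(\bm 0,\cdot,\cdot)\vvvert$, so Banach's fixed-point theorem applies. Finitely many steps reach $t=1$.

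The step I expect to need most care is this coercivity identification. $\mathcal{A}_\mathsf{D}$ alone only controls $\bm q_h$, so I must check that the difference solutions still satisfy the flux–lifting relation, and that the Poincaré constant suffices to absorb the $L^2$ part of \eqref{eq:continuity_convection}. Both hold because the $\bm r_h$-equation carries no right-hand side perturbation.
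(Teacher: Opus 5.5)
Your proof is correct, but it follows a different route from the paper. The paper uses the Leray--Schauder theorem. It builds a homotopy $\bm\Phi(\cdot,\sigma)$ that scales both $\mathcal{A}_\mathsf{C}$ (frozen at the argument) and the data $\tilde f=\alpha f+\psi_h^{k-1}$ by $\sigma$, and shifts the constraint equation by $(1-\sigma)\nabla\mathcal{R}^*(0)$ so that $\sigma=0$ has the trivial solution. The work then goes into a $\sigma$-uniform a priori bound on fixed points. That bound requires testing with the inf--sup partner $\bm r_h$ of \eqref{eq:inf_sup_HDG}, plus a separate bound on $\|\psi_h\|_{H^1(\mathcal{T}_h)^*}$ via the test function $(\bm 0,\Pi_h w,\hat w)$ of \eqref{eq:triple_to_dg}. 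Continuity of $\bm\Phi$ and uniqueness are only asserted there.

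You instead transplant the Banach continuation of \Cref{thm:existence_conforming} to the convection parameter, with \Cref{lemma:diff_only_existence} as the base case. Since only differences of solutions enter, monotonicity disposes of the nonlinearity and no bound on $\psi_h$ is ever needed. Testing with $\delta\bm q_h$ itself, rather than the inf--sup partner, collapses $\mathcal{A}_\mathsf{D}$ to $\|\kappa^{-1/2}\delta\bm q_h\|^2$. The unperturbed $\bm r_h$-equation then converts this into control of $\vvvert(\bm 0,\delta u_h,\delta\hat u_h)\vvvert$.

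Your route therefore gives uniqueness and Lipschitz dependence on the data in the same stroke, and it treats the conforming and hybrid methods uniformly. It also uses \Cref{lemma:diff_only_existence} exactly as stated, whereas the paper's shifted constraint equation needs a mild extension of it. The paper's route, in exchange, needs only a priori bounds rather than a contraction, so it would survive perturbations that are merely continuous. It also yields an explicit stability estimate for $(\bm q_h,u_h,\hat u_h,\psi_h)$ as a by-product.

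One small precision. Unless $\kappa$ is piecewise constant, $\kappa\bigl(\nabla_h\delta u_h-\bm L_h(\delta u_h-\delta\hat u_h)\bigr)$ need not lie in $\bm\Sigma_h^p$. The same glitch sits in \eqref{eq:recovering_flux}. The identity $\|\kappa^{-1/2}\delta\bm q_h\|^2=a_h(\delta u,\delta u)$ should therefore be replaced by the bound $\|\nabla_h\delta u_h-\bm L_h(\delta u_h-\delta\hat u_h)\|_{L^2(\Omega)}\lesssim\|\kappa^{-1/2}\delta\bm q_h\|_{L^2(\Omega)}$. You get this by testing the $\bm r_h$-equation with $\bm r_h=\nabla_h\delta u_h-\bm L_h(\delta u_h-\delta\hat u_h)\in\bm\Sigma_h^p$, and it is all your contraction estimate actually uses.
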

\begin{proof}
Here, we apply
the Leray--Schauder fixed point Theorem \cite[Theorem 9.12-3]{ciarlet2013linear}. We drop the superscript $k$ and define $\tilde f = \alpha f + \psi_h^{k-1}$. Consider the mapping $\bm \Phi : \bm \Sigma_h^p  \times V_h^p \times  M_{h,0}^p \times [0,1] \rightarrow   \bm \Sigma_h^p  \times V_h^p \times  M_{h,0}^p $ where $\bm \Phi (\bm w_h, w_h, \hat w_h, \sigma) = (\bm q_h , u_h, \hat u_h)$ is the unique solution to the following problem
  \begin{subequations}
    \label{eq:def_Phi}
    \begin{alignat}{2}
      \alpha \mathcal{A}_\mathsf{D}((\bm q_h, u_h, \hat u_h), (\bm v_h, v_h, \hat v_h)) +   (\psi_h , v_h) & =  - \sigma \alpha \mathcal{A}_{\mathsf{C}} ( (w_h, \hat w_h) , (v_h, \hat v_h))  + \sigma (\tilde f, v_h)
      \label{eq:def_Phi_0}
      \\
      (u_h, q_h)  - (\nabla \mathcal{R}^*(\psi_h) , q_h)                                                   & = -(\nabla \mathcal{R}^*(0)(1-\sigma), q_h). \label{eq:def_Phi_1}
    \end{alignat}
  \end{subequations}
  for all $ (v_h, \hat v_h) \in V_h^p \times M_{h,0}^p $, $\bm v_h \in \bm \Sigma_h^p $ and  $q_h \in V_h^q$. Observe that $\bm \Phi$ is well defined by \Cref{lemma:diff_only_existence}. Further,  $\bm \Phi(\bm w_h, w_h, \hat w_h, 0 ) = (\bm 0 , 0 , 0)$ since $(\bm 0, 0, 0)$ clearly solves \eqref{eq:def_Phi} and solutions to \eqref{eq:def_Phi} are unique by \Cref{lemma:diff_only_existence}. We now show that fixed points $(\bm q_h, u_h, \hat u_h)$ satisfying
  \begin{equation} \label{eq:fixed_points_Phi}
    \bm \Phi(\bm q_h, u_h, \hat u_h, \sigma) = (\bm q_h, u_h, \hat u_h),
  \end{equation}
  are bounded uniformly bounded with respect to $\sigma \in [0,1]$.  To this end, consider \eqref{eq:def_Phi} with $(w_h, \hat w_h) = (u_h, \hat u_h)$, test \eqref{eq:def_Phi_0} with $(\bm v_h, v_h, \hat v_h)  = (\bm r_h, u_h, \hat u_h)$  for $\bm r_h$ satisfying \eqref{eq:inf_sup_HDG},  test  \eqref{eq:def_Phi_1} with $q_h = \psi_h$, and subtract the resulting equations. We obtain
  \begin{multline}
    \alpha \vvvert (\bm q_h, u_h ,\hat u_h)\vvvert^2 + (\nabla \mathcal{R}^*(\psi_h) - \nabla \mathcal{R}^*(0), \psi_h) + \sigma \alpha \mathcal{A}_\mathsf{C}((u_h, \hat u_h), (u_h, \hat u_h))
    \\ \lesssim \sigma (\tilde f, u_h) - \sigma (\nabla \mathcal{R}^*(0) , \psi_h).
  \end{multline}
  Since $\nabla \mathcal{R}^*$ is strictly monotone, the second term above is positive. Similarly, from \eqref{eq:nonnegativity_convection}, the third term is non-negative.  Therefore, using that $\sigma \leq 1$, Cauchy--Schwarz inequality, and the definition of $\|\psi_h\|_{H^1(\mathcal{T}_h)^*}$,  we obtain that
  \begin{align}\label{eq:bd_fixed_pts_0}
    \alpha \vvvert (\bm q_h, u_h ,\hat u_h)\vvvert^2 \lesssim \|\tilde f\|_{L^2(\Omega)}\|u_h\|_{L^2(\Omega)} + \|\nabla \mathcal{R}^*(0)\|_{\mathrm{DG}} \|\psi_h\|_{H^1(\mathcal{T}_h)^*}.
  \end{align}
  From \eqref{eq:Poincare} and \eqref{eq:dg_to_triple}, we obtain that
  \begin{equation}
         \|u_h\|_{L^2(\Omega)} \lesssim \|u_h\|_{\mathrm{DG}} \lesssim  \vvvert (\bm q_h, u_h ,\hat u_h)\vvvert.
  \label{eq:specialized_Poincare}
  \end{equation}

  To bound the second term in \eqref{eq:bd_fixed_pts_0}, we utilize \eqref{eq:negative_norm_proj} and test \eqref{eq:def_Phi_0} with $(\bm 0, \Pi_h w, \hat w)$ where $\hat w \in M_{h,0}^p$ is given by
  \begin{equation}\label{eq:triple_to_dg}
    \hat w =
    \begin{cases}
      \frac12 (\Pi_h w \vert_{T_F^1} + \Pi_h w \vert_{T_F^2}) & F \in \Gamma_h^0, \quad  F =\partial T_F^1 \cap \partial T_F^2 , \\
      0                                                       & F \in \Gamma_h^{\partial} .
    \end{cases}
  \end{equation}
  We obtain that
  \begin{multline}
    (\psi_h , \Pi_h w)   =  - \alpha \mathcal{A}_\mathsf{D}((\bm q_h, u_h, \hat u_h), (\bm 0, \Pi_h w, \hat w)) - \sigma \alpha \mathcal{A}_{\mathsf{C}}(  (u_h, \hat u_h) , (\Pi_h w, \hat  w))  + \sigma (\tilde f, \Pi_h w)
  \end{multline}
  Using continuity of $\mathcal{A}_\mathsf{D}$ \eqref{eq:continuity_diff} and $\mathcal{A}_{\mathsf{C}}$ \eqref{eq:continuity_convection} followed by \eqref{eq:specialized_Poincare}, we obtain that
  \begin{align}
    |(\psi_h , \Pi_h w) | \lesssim \vvvert (\bm q_h, u_h, \hat u_h)  \vvvert  \vvvert (\bm 0, \Pi_h w, \hat w) \vvvert + \|\tilde f \|_{L^2(\Omega)} \|\Pi_h w\|_{L^2(\Omega)}.
  \end{align}
  From the observation that $ \vvvert (\bm 0, \Pi_h w, \hat w) \vvvert \lesssim \|\Pi_h w\|_{\mathrm{DG}}$, and \eqref{eq:negative_norm_proj}, we obtain that
  \begin{equation} \label{eq:negative_norm_psi}
    \|\psi_h\|_{H^1(\mathcal{T}_h)^*} \lesssim \vvvert (\bm q_h, u_h, \hat u_h)  \vvvert + \|\tilde f\|_{L^2(\Omega)}.
  \end{equation}
  Substituting \eqref{eq:negative_norm_psi} in \eqref{eq:bd_fixed_pts_0} and reusing the resulting bound in \eqref{eq:negative_norm_psi} shows that
  \begin{equation}
    \vvvert (\bm q_h, u_h, \hat u_h)  \vvvert +   \|\psi_h\|_{H^1(\mathcal{T}_h)^*}  \lesssim \|\tilde f\|_{L^2(\Omega)}+ \|\nabla \mathcal{R}^*(0)\|_{\mathrm{DG}}.
  \end{equation}
  This provides a uniform bound on the fixed points \eqref{eq:fixed_points_Phi} independent of $\sigma$.  Compactness of $\bm \Phi$ follows from continuity since the spaces are finite-dimensional. Therefore, by an application of the Leray--Schauder Theorem, there exists a fixed point for $\bm \Phi (\cdot, \cdot, \cdot, 1)$. This fixed point is a solution to \eqref{eq:nonlinear_subproblem}.  Uniqueness follows from the monotonicity of $\nabla \mathcal{R}^*$; we omit the details for brevity.
\end{proof}

\subsection{Error rates for FOSPG applied to the non--symmetric VI defined in \eqref{eq:global_obstacle}}
We begin by modifying and studying the properties of the reconstruction and Cl\'ement maps from \cite{Fuhrer+2024+363+378, aprioriPG} when applied to functions in $H^1(\mathcal{T}_h)$.
The first step is to construct  the  map $\mathcal{C}_h : H^1(\mathcal{T}_h) \times H^{1/2}(\Gamma_h^\partial) \rightarrow H^1(\Omega) \cap V_h$. Here, 
$$H^{1/2}(\Gamma_h^\partial) := \{v \in L^2(\partial \Omega) \mid v \vert_F \in H^{1/2}(F) ~\fa \, F \in \Gamma_h^\partial \}.$$
We denote by $\mathcal{N}_h$ the set of all interior nodes $z$ of the mesh $\mathcal{T}_h$ and by $\omega_z$ the star patch containing elements sharing the node $z$.  We set
\begin{equation} \label{eq:clement_case_1}
\mathcal{C}_h (v,\hat v)  = \sum_{z \in \mathcal{N}_h } v_z  \varphi_z,  \quad v_z = \sum_{ T \subset \omega_z } \frac{\alpha_{z,T}}{|T|}\int_{T} v \dd x \, \text{ if } z \in \mathcal{N}_h.
\end{equation}
Here,  $\varphi_z$ is the Lagrange nodal basis function and $\{\alpha_{z,T}\}_{T \subset \omega_z}$ are weights selected such that \cite{Fuhrer+2024+363+378} $$ z = \sum_{T \subset \omega_z} \alpha_{z,T} s_T, \quad  \sum_{T \subset \omega_z} \alpha_{z,T} = 1,  \quad \alpha_{z,T} \geq 0,
$$
where $s_T$ is the centroid of an element $T \in \mathcal{T}_h$.
The above weighting ensures that for any $p \in \mathbb{P}^1(\omega_z),$ $\mathcal{C}_h p(z) = p(z)$ for all $z \in \mathcal{N}_h$. This is a key realization to obtain optimal error rates \cite{Fuhrer+2024+363+378}.  For the boundary nodes, we use the dual basis associated to an edge on the boundary following \cite{scott1990finite}. That is, for $z \in \partial \Omega$ , select a  face $F_z \subset \partial \Omega$ such that $z \in F_z$ and let $\chi_z \in \mathbb P^1(F_z)$ be the dual basis function satisfying
\[
\int_{F_z} \chi_z \varphi_z \mathrm{d} s = 1 \text{ and } \int_{F_z} \chi_z \varphi_{z'}\mathrm{d} s = 0,
\]
for all nodes $z' \in F_z$ with $z ' \neq z$. We then define
\begin{equation}
v_z =  \int_{F_z} \hat v \chi_z \mathrm{d}s , \quad z \in \partial \Omega.
\end{equation}
Observe that the construction of $\mathcal{C}_h$ depends on the choice of $F_z$;  we choose not to include this dependency in the notation for simplicity. \begin{lemma}[Properties of $\mathcal{C}_h$] \label{lemma:modified_Clement}
For any $(v, \hat v) \in H^1(\mathcal{T}_h) \times H^{1/2}(\Gamma_h^\partial ),$
\begin{equation}
\label{eq:stability_clement_modified}
\|\mathcal{C}_h (v, \hat v) \|_{L^2(\Omega)} \lesssim   \left( \|v\|_{L^2(\Omega)}^2 + \sum_{F \in \Gamma_h^\partial }  h_{F} \|\hat v \|_{L^2(F)}^2 \right)^{1/2}.
\end{equation}
For any $(u_h , \hat u_h)
\in V_h^p \times M^p_{h,0}$,
\begin{equation}
\label{eq:L2_estimate_Clement}
\|u_h - \mathcal{C}_h(u_h, \hat u_h) \|_{L^2(\Omega)} \lesssim h \left(  \| u_h\|^2_{\mathrm{DG}} + \sum_{F \in \Gamma_h^\partial }h_{F}^{-1} \| u_h - \hat u_h\|^2_{L^2(F)} \right)^{1/2}.
\end{equation}
\end{lemma}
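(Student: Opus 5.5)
The two estimates are local, so I would prove them element by element and then sum over $\mathcal{T}_h$. Shape regularity means each element lies in a uniformly bounded number of patches $\omega_z$ and $\|\varphi_z\|_{L^2(\Omega)}\lesssim |\omega_z|^{1/2}$. For the stability bound \eqref{eq:stability_clement_modified}, fix $T$ and write $\mathcal{C}_h(v,\hat v)|_T=\sum_{z\in T} v_z\varphi_z$. Then $\|\mathcal{C}_h(v,\hat v)\|_{L^2(T)}\lesssim |T|^{1/2}\max_{z\in T}|v_z|$.

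The nodal values are bounded as follows. For an interior node $z$, the weights satisfy $0\le\alpha_{z,T'}\le 1$, so Cauchy--Schwarz gives
\[
|v_z|\le \sum_{T'\subset\omega_z}|T'|^{-1/2}\|v\|_{L^2(T')}\lesssim |T|^{-1/2}\|v\|_{L^2(\omega_z)} .
\]
For a boundary node, scaling of the dual basis gives $\|\chi_z\|_{L^\infty(F_z)}\lesssim |F_z|^{-1}$, so $|v_z|\lesssim |F_z|^{-1/2}\|\hat v\|_{L^2(F_z)}$. With $|T|\simeq h_F^{n}$ and $|F_z|\simeq h_F^{n-1}$, this gives $|T|^{1/2}|v_z|\lesssim h_F^{1/2}\|\hat v\|_{L^2(F_z)}$. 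Squaring and summing over $T$ with finite overlap yields \eqref{eq:stability_clement_modified}.

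For the approximation bound \eqref{eq:L2_estimate_Clement}, fix $T$ and a node $z$ of $T$. Since $\sum_z\varphi_z\equiv1$ (counting boundary nodes as well), we have
\[
u_h-\mathcal{C}_h(u_h,\hat u_h)=\sum_{z\in T}\bigl(u_h|_T-v_z\bigr)\varphi_z \quad\text{on } T .
\]
Hence it suffices to bound $\|u_h|_T-v_z\|_{L^\infty(T)}$ by $|T|^{-1/2}h$ times the local DG quantity, i.e.\ by $h^{1-n/2}$ times the local broken gradient, jump and boundary terms.

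For an interior node, $v_z$ is a convex combination of element means $\bar u_{T'}$, $T'\subset\omega_z$. I would therefore estimate $\|u_h|_T-\bar u_{T'}\|_{L^\infty(T)}$. On each element, $u_h-\bar u_{T'}$ is controlled via a Poincaré inequality and inverse estimates by $h^{1-n/2}\|\nabla u_h\|_{L^2(T')}$. Passing from $T'$ to $T$ across the chain of facets inside $\omega_z$ costs, at each crossing, $h_F^{-(n-1)/2}\|[u_h]\|_{L^2(F)}=h^{1-n/2}\,h_F^{-1/2}\|[u_h]\|_{L^2(F)}$ by polynomial inverse estimates on $F$. These are exactly the DG-norm terms. (The exactness of $\mathcal{C}_h$ on $\mathbb P_1$ is not needed for this low-order estimate.)

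For a boundary node $z\in F_z\subset\partial\Omega$, I would write
\[
u_h|_T(x)-v_z=\bigl(u_h|_T(x)-u_h|_{T_z}(z)\bigr)+\int_{F_z}(u_h-\hat u_h)\chi_z\,\mathrm{d}s ,
\]
where $T_z$ is the element containing $F_z$. This uses the duality property of $\chi_z$ on the $\mathbb P_1$ interpolant of $u_h|_{F_z}$. I expect this to be the main obstacle: $u_h$ is of degree $p$, so the identity $\int_{F_z}u_h\chi_z=u_h(z)$ holds only up to a term of the form $\|u_h-I_1u_h\|_{L^\infty(F_z)}\lesssim h^{1-n/2}\|\nabla u_h\|_{L^2(T_z)}$. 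That error is acceptable. The first difference is handled by the same chain argument as for interior nodes, through the patch of $z$. The second term is bounded by $|F_z|^{-1/2}\|u_h-\hat u_h\|_{L^2(F_z)}\simeq h^{1-n/2}h_F^{-1/2}\|u_h-\hat u_h\|_{L^2(F_z)}$, which is the boundary term in \eqref{eq:L2_estimate_Clement}.

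Finally, squaring, multiplying by $\|\varphi_z\|_{L^2(T)}^2\lesssim |T|\simeq h^n$, and summing over elements with finite patch overlap gives \eqref{eq:L2_estimate_Clement}.
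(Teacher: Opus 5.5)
Your proof is correct. The stability bound \eqref{eq:stability_clement_modified} is proved as in the paper: nodal bounds via Cauchy--Schwarz and the scaling of $\chi_z$. For \eqref{eq:L2_estimate_Clement}, however, you take a genuinely different route.

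The paper never estimates $u_h|_T-v_z$ directly. It inserts the Oswald averaging operator $\mathsf{E}_h$ and splits $u_h-\mathcal{C}_h(u_h,\hat u_h)$ into three pieces:
\begin{itemize}
\item $u_h-\mathsf{E}_h u_h$, handled by the Oswald estimate;
\item $\mathsf{E}_h u_h-\mathcal{C}_h(\mathsf{E}_h u_h,\mathsf{E}_h u_h)$, handled by the approximation property of the conforming interpolant from \cite[Lemma 4.2]{aprioriPG} together with $|\mathsf{E}_h u_h|_{H^1(\Omega)}\lesssim\|u_h\|_{\mathrm{DG}}$;
\item $\mathcal{C}_h(\mathsf{E}_h u_h-u_h,\mathsf{E}_h u_h-\hat u_h)$, handled by the stability bound just proved plus the discrete trace inequality $h_F\|\mathsf{E}_h u_h-u_h\|_{L^2(F)}^2\lesssim\|\mathsf{E}_h u_h-u_h\|_{L^2(T)}^2$.
\end{itemize}
That argument is short and modular. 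All jump-chaining across patches is hidden inside the known Oswald bound, and the first estimate of the lemma is reused for the second.

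Your partition-of-unity argument is self-contained. It makes explicit that only preservation of constants is used, namely $\sum_{T}\alpha_{z,T}=1$ and $\int_{F_z}\chi_z\,\mathrm{d}s=1$. The price is that you must supply yourself what the paper imports: face-connectedness of the vertex patches $\omega_z$ (also at boundary vertices) with uniformly bounded chain length, and the facet inverse estimates.

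The step you flag as the main obstacle is lighter than you expect.
\begin{itemize}
\item Since $\int_{F_z}\chi_z\,\mathrm{d}s=1$, you have $\bigl|\int_{F_z}u_h\chi_z\,\mathrm{d}s-u_h|_{T_z}(z)\bigr|=\bigl|\int_{F_z}(u_h-u_h|_{T_z}(z))\chi_z\,\mathrm{d}s\bigr|\lesssim h\|\nabla u_h\|_{L^\infty(T_z)}$, with no $\mathbb{P}_1$ interpolant needed.
\item In the stated setting $\hat u_h\in M_{h,0}^p$ vanishes on boundary facets, so $v_z=0$ at every boundary node. The boundary case then reduces to bounding $\|u_h|_T\|_{L^\infty(T)}$ by chaining to $F_z$ and using the boundary-facet part of the DG norm.
\end{itemize}
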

\begin{proof}
To show \eqref{eq:stability_clement_modified}, we bound the nodal values of this interpolant.
To this end, it readily follows from Cauchy--Schwarz inequality, the fact that $\alpha_{z,T} \leq 1$, and shape regularity that
$$ |v_z  | \lesssim |T|^{-1/2}
\|v_h\|_{L^2(\omega_z)}, \quad z \in \mathcal{N}_h.
$$
For the boundary nodes, using that $\|\chi_z\|_{L^2(F_z)} \lesssim h_{F_z}^{(1-n)/2}$ \cite[Lemma 3.1]{scott1990finite} and Cauchy--Schwarz inequality, we obtain that
$$|v_z | \lesssim h_{F_z}^{(1-n)/2}\|\hat v\|_{L^2(F_z)}, \; z \in \partial \Omega.
$$
Combining the above with the observation that $\|\varphi_z\|_{L^2(T)} \lesssim h_T^{n/2}$ shows \eqref{eq:stability_clement_modified}.
To show \eqref{eq:L2_estimate_Clement}, we first recall the existence of an averaging/Oswald enriching map $\mathsf{E}: V_h \rightarrow V_h \cap H^1(\Omega)$  \cite{doi:10.1137/S0036142902405217, ern2017finite} with the following properties
   \begin{align}\label{eq:oswald}
 \|v - \mathsf{E}_h v \|_{L^2(\Omega)} + h | \mathsf{E}_h v |_{H^1(\Omega)}  \lesssim h \|v\|_{\mathrm{DG}}, \quad v \in V_h.
\end{align}
We then write
\begin{multline}
\|u_h - \mathcal{C}_h(u_h, \hat u_h) \|_{L^2(\Omega)} \leq \| u_h -\mathsf{E}_h u_h \|_{L^2(\Omega)} \\  + \|\mathsf{E}_h u_h  - \mathcal{C}_h(\mathsf{E}_h u_h, \mathsf{E}_h u_h)\|_{L^2(\Omega)} + \|\mathcal{C}_h(\mathsf{E}_h u_h - u_h , \mathsf{E}_h u_h - \hat u_h)\|_{L^2(\Omega)}.
\end{multline}
Since $\mathsf{E}_h u_h \in H^1(\Omega)$,  $\mathcal{C}_h (\mathsf E_h u_h, \mathsf E_h u_h)$ reduces to the interpolant defined in \cite[Lemma 4.2]{aprioriPG} and we can use the approximation property proved therein. With \eqref{eq:oswald}, we obtain \begin{align} \label{eq:estimate_L2_0}
   \|u_h - \mathcal{C}_h(u_h, \hat u_h) \|_{L^2(\Omega)} \lesssim h (\|u_h\|_{\mathrm{DG}} + |\mathsf{E}_h v|_{H^1(\Omega)})
   + \|\mathcal{C}_h(\mathsf{E}_h u_h - u_h , \mathsf{E}_h u_h - \hat u_h)\|_{L^2(\Omega)}.
\end{align}
To bound the last term, denoted by $L$, we use the stability property \eqref{eq:stability_clement_modified}. We have
\begin{align}
L^2 & \lesssim  \|\mathsf{E}_h u_h - u_h\|^2_{L^2(\Omega)} + \sum_{F \in \Gamma_h^\partial } h_{F}\|\mathsf{E}_h u_h - \hat u_h\|^2_{L^2(F)}   \\
& \leq \|\mathsf{E}_h u_h - u_h\|^2_{L^2(\Omega)} +  \sum_{F \in \Gamma_h^\partial } 2h_{F} ( \|\mathsf{E}_h u_h - u_h \|_{L^2(F)}^2 + \| u_h - \hat u_h\|^2_{L^2(F)}) \nonumber \\
& \lesssim
\|\mathsf{E}_h u_h - u_h\|^2_{L^2(\Omega)} + h^2 \sum_{F \in \Gamma_h^\partial }h_{F}^{-1} \| u_h - \hat u_h\|^2_{L^2(F)} , \nonumber
\end{align}
where to obtain the last bound, we used a local trace inequality. Using the above bound, \eqref{eq:oswald},  \eqref{eq:estimate_L2_0}, triangle inequality, and the shape regularity of $\mathcal{T}_h$ shows the result.
\end{proof}

\begin{lemma}[Consistency] \label{lemma:consistency}
Let $u \in H^2(\Omega)\cap H^1_0(\Omega)$ and $\bm q = - \kappa \nabla u$. For any $(\bm r_h, v_h, \hat v_h) \in \bm \Sigma_h^p \times V_h^p \times M_{h,0}^p$, we have that
\begin{align}
\mathcal{A}_\mathsf{D}((\bm q, u, u), (\bm r_h, v_h, \hat v_h))  + \mathcal{A}_\mathsf{C}((u, u), (v_h, \hat v_h)) = \mathcal{A}(\bm q, u, v_h),
\end{align}
where the form $\mathcal{A}$ is given by
\begin{equation} \label{eq:def_A_FOSPG_err}
 \mathcal{A}(\bm q, u, v_h)  := (\nabla \cdot  \bm q, v_h)  +  (\nabla \cdot (\beta u), v_h) .
\end{equation}
\end{lemma}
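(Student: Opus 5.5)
The identity follows by inserting the exact triple $(\bm q, u, u)$ into the definitions of $\mathcal{A}_\mathsf{D}$ and $\mathcal{A}_\mathsf{C}$ and integrating by parts element by element. Since $u \in H^2(\Omega)$, we have $\bm q = -\kappa\nabla u \in H(\operatorname{div},\Omega)$, assuming enough regularity of $\kappa$ that the normal traces $\bm q\cdot\bm n$ are well defined and single valued on facets. The hybrid variable is the trace of $u$ itself, so $u - \hat u = 0$ on every $\partial T$. The plan is to treat the diffusion and advection parts separately and then add them.

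\textbf{Diffusion part.} By definition,
\begin{equation*}
\mathcal{A}_\mathsf{D}((\bm q, u, u), (\bm r_h, v_h, \hat v_h)) = (\kappa^{-1}\bm q, \bm r_h) + \mathcal{B}(\bm r_h,(u,u)) - \mathcal{B}(\bm q,(v_h,\hat v_h)).
\end{equation*}
Because $u - \hat u = 0$, we get $\mathcal{B}(\bm r_h,(u,u)) = (\bm r_h,\nabla u)$, and $(\kappa^{-1}\bm q,\bm r_h) = -(\nabla u,\bm r_h)$, so the first two terms cancel. For the last term, integrate $(\bm q,\nabla_h v_h)$ by parts on each $T$:
\begin{equation*}
(\bm q,\nabla_h v_h) = -(\nabla\cdot\bm q, v_h) + (v_h,\bm q\cdot\bm n)_{\partial\mathcal{T}_h}.
\end{equation*}
This gives
\begin{equation*}
-\mathcal{B}(\bm q,(v_h,\hat v_h)) = (\nabla\cdot\bm q, v_h) - (\hat v_h,\bm q\cdot\bm n)_{\partial\mathcal{T}_h}.
\end{equation*}
The remaining facet term vanishes. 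On each interior facet, $\hat v_h$ is single valued while $\bm q\cdot\bm n$ appears twice with opposite signs. On boundary facets, $\hat v_h = 0$ because $\hat v_h \in M_{h,0}^p$.

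\textbf{Advection part.} Since $u$ is continuous across facets (as an $H^2$ function, its traces agree), $u^{\mathrm{up}} = u$ on every facet regardless of the sign of $\beta\cdot\bm n$. Integrating $-(\beta u,\nabla_h v_h)$ by parts elementwise gives
\begin{equation*}
-(\beta u,\nabla_h v_h) = (\nabla\cdot(\beta u), v_h) - (\beta\cdot\bm n\, u, v_h)_{\partial\mathcal{T}_h}.
\end{equation*}
Adding the upwind term $(\beta\cdot\bm n\,u, v_h - \hat v_h)_{\partial\mathcal{T}_h}$ leaves
\begin{equation*}
\mathcal{A}_\mathsf{C}((u,u),(v_h,\hat v_h)) = (\nabla\cdot(\beta u), v_h) - (\beta\cdot\bm n\,u,\hat v_h)_{\partial\mathcal{T}_h}.
\end{equation*}
The standing assumption that $\beta\cdot\bm n$ is continuous across element interfaces, together with single-valued $u$ and $\hat v_h$, makes interior contributions cancel pairwise. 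Boundary contributions vanish because $\hat v_h = 0$ there.

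Summing the two parts yields exactly $\mathcal{A}(\bm q,u,v_h)$ as defined in \eqref{eq:def_A_FOSPG_err}.

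\textbf{Main obstacle.} The only delicate step is justifying the facet cancellations and the identification $u^{\mathrm{up}} = u$. These require single-valued traces of $u$ and of $\bm q\cdot\bm n$, which come from $u \in H^2(\Omega)$, and the assumed continuity of $\beta\cdot\bm n$ across interfaces. Everything else is routine elementwise Green's formula.
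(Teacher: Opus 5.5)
Your proposal is correct and follows the same route as the paper. The paper calls the argument standard and records only the two facet cancellations that you derive by elementwise integration by parts: $(\bm q\cdot\bm n,\hat v_h)_{\partial\mathcal{T}_h}=0$, and the vanishing of the advective facet term $(\beta\cdot\bm n\,u,\hat v_h)_{\partial\mathcal{T}_h}$. Your explicit use of the continuity of $\beta\cdot\bm n$ in the second cancellation is the precise form of the paper's remark, which leaves out the factor $\beta\cdot\bm n$.
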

\begin{proof}
The proof is standard; we skip the details. It suffices to note that $(w, \hat v_h)_{\partial \mathcal{T}_h} = 0$ whenever $w \in H^1_0(\Omega)$ and $(\bm q \cdot \bm n , \hat v_h )_{\partial \mathcal{T}_h} = 0$ whenever $\hat v_h \in M_{h,0}^p$ and $\bm q \in H(\mathrm{div};\Omega)$.
\end{proof}
\begin{theorem}\label{thm:FOSPG_err_rate}
Assume that the true solution $u^*$ to \eqref{eq:global_obstacle} satisfies $u^* \in H^2(\Omega) \cap H^1_0(\Omega)$. Define $\bm q^* = -\kappa \nabla u^*$,  and assume that $\phi \in H^2(\Omega)$ with $\phi \vert_{\partial \Omega} = \mathrm{constant} \leq 0$. In \Cref{alg:FOSPG_1}, let $p = 1$,  $q \in \{0,1\}$ and assume that $\|\psi_h^0\|_{L^{\infty}(\Omega)} \leq c$ where $c$ is independent of $h$.  Then, the weighted averages  $(\overline {\bm q}_h^\ell, \overline{u}_h^\ell , \overline{\hat u}_h^\ell)$ given by
\[
(\overline {\bm q}_h^\ell, \overline{u}_h^\ell , \overline{\hat u}_h^\ell ) = \frac{1}{\sum_{k=1}^\ell \alpha_k} \sum_{k=1}^\ell \alpha_k (\bm q_h^k, u_h^k, \hat u_h^k),
\]
where $(\bm q_h^k, u_h^k, \hat u_h^k)$ are generated from \Cref{alg:FOSPG_1} converge to $u^*$ and the
following error estimate holds for any $\ell \geq 1$
\begin{align}
\vvvert (\overline {\bm q}_h^\ell - \bm q^* , \overline{u}_h^\ell - u^*, \overline{\hat u}_h^\ell - u^*)  \vvvert^2  + \|\overline \lambda_h^\ell - \lambda^*\|_{H^1(\mathcal{T}_h)^*}^2 \lesssim \frac{1} {\sum_{k=1}^\ell \alpha_k}  + h^2.
\end{align}
In addition, if $q =0$, then the bound preserving approximation, given by \[
\langle \nabla \mathcal{R}^*(\psi_h^\ell) \rangle = \frac{1}{\sum_{k=1}^\ell \alpha_k} \sum_{k=1}^\ell \alpha_k \nabla \mathcal{R}^*(\psi_h^k),
\]
satisfies the following error estimate
\begin{align}
\|u^* - \langle \nabla \mathcal{R}^*(\psi_h^\ell) \rangle \|_{L^2(\Omega)}^2 \leq \frac{1}{\sum_{k=1}^\ell \alpha_k}  + h^2. \label{eq:error_bound_preserving}
\end{align}
\end{theorem}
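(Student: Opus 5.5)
We mirror the proof of \Cref{thm:best_approximation}, replacing coercivity of $\mathcal{A}$ with the inf--sup property \eqref{eq:inf_sup_HDG} of $\mathcal{A}_\mathsf{D}$ and the nonnegativity \eqref{eq:nonnegativity_convection} of $\mathcal{A}_\mathsf{C}$.

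\textbf{Step 1 (energy recursion).} Let $(\bm q^*_I, u^*_I, \hat u^*_I)$ be an interpolant of $(\bm q^*, u^*, u^*)$: the local $L^2$ projection onto $V_h^1$ (with $\Pi_h$ acting as a Fortin operator for $(\cdot,w_h)$, $w_h\in V_h^q$), the facet $L^2$ projection, and an RT projection of $\bm q^*$. Set $e^k = (\bm q_h^k - \bm q^*_I, u_h^k - u^*_I, \hat u_h^k - \hat u^*_I)$. By \Cref{lemma:consistency}, and since $f = \mathcal{A}(\bm q^*, u^*,\cdot) + \lambda^*$ with $\lambda^* \in L^2$ (as $u^*\in H^2$),
\[
\mathcal{A}_\mathsf{D}(e^k, \cdot) + \mathcal{A}_\mathsf{C}(e^k,\cdot) + \tfrac{1}{\alpha_k}(\psi_h^k - \psi_h^{k-1}, v_h) = (\lambda^*, v_h) + \text{(interpolation consistency terms)}.
\]
We test with $v_h = u_h^k - \Pi_h u^*$ (with the hatted and flux components chosen accordingly). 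Using \eqref{eq:nonlinear_subproblem_1} and the Fortin property, the $\psi$-term becomes $(\psi_h^k-\psi_h^{k-1}, o_h^k - u^*)$ with $o_h^k=\nabla\mathcal{R}^*(\psi_h^k)$. The three-point identity \eqref{eq:three_point_identity} then gives the telescoping Bregman terms $(\mathcal{D}(u^*,o_h^k)-\mathcal{D}(u^*,o_h^{k-1}),1)$ exactly as in \eqref{eq:three_points_1}.

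To obtain control of the full triple norm (and not only $\|\kappa^{-1/2}\bm q\|$), the flux test function must be $\bm r_h$ from \eqref{eq:inf_sup_HDG}, plus a multiple of the natural test. We use the combination $\delta\bm r_h + \bm e_q$ with $\delta$ small, so that the $\psi$-term pairs only with $v_h$ and is unaffected. Summing with weights $\alpha_k$, dividing by $\sum\alpha_k$, and applying Jensen's inequality then bounds $\vvvert \overline e^\ell\vvvert^2$ by
\[
\frac{(\mathcal{D}(u^*,o_h^0),1)}{\sum\alpha_k} + \text{consistency errors} + (\lambda^*, \Pi_h u^* - \overline u_h^\ell).
\]
The first term is $O(1/\sum\alpha_k)$ because $\psi_h^0$ is bounded in $L^\infty$.

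\textbf{Step 2 (the $\lambda^*$ term).} This is the main obstacle. We split
\[
(\lambda^*, \Pi_h u^* - \overline u_h^\ell) = (\lambda^*, \Pi_h u^* - u^*) + (\lambda^*, u^* - v) + (\lambda^*, v - \overline u_h^\ell)
\]
for a $v\in K$, using $(\lambda^*, u^*-v)\le 0$ from \eqref{eq:lam-normal-cone}. The first piece is $O(h^2)$ since $\lambda^*\in L^2$ and the projection error is $O(h^2)$ in $L^2$ (for $q=0$ use orthogonality to constants; $\lambda^*\in H^1$ is not needed). For $v$ we take the reconstruction $\mathcal{E}_h\overline u_h^\ell = \mathcal{C}_h(\overline u_h^\ell,\overline{\hat u}_h^\ell) + \varepsilon$, where $\mathcal{C}_h$ is the modified Clément map of \Cref{lemma:modified_Clement}. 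Constraint preservation $\mathcal{E}_h\overline u_h^\ell\ge\phi$ should follow from the cell averages $\Pi_0 u_h^k = \Pi_0 o_h^k > \phi$-type information, using the centroid weights (exact on $\mathbb P_1$, hence on $\phi$ up to an $O(h^2)$ correction $\varepsilon$ absorbing $\phi\in H^2$). Here the assumptions $p=1$, $q\in\{0,1\}$, and $\phi|_{\partial\Omega}$ constant enter, the last ensuring the boundary dual-basis values handle the constraint. Then \eqref{eq:L2_estimate_Clement} applied to $\overline u_h^\ell - \Pi_h u^*$ gives
\[
|(\lambda^*, v-\overline u_h^\ell)| \lesssim \|\lambda^*\|\big(h\vvvert\overline e^\ell\vvvert + h^2\big),
\]
and Young's inequality absorbs the first part into the left-hand side.

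\textbf{Step 3 (dual variable and bound-preserving estimate).} The dual error follows as in \eqref{eq:negative_norm_psi}. Subtracting the consistency identity, testing with $(\bm 0,\Pi_h w,\hat w)$ from \eqref{eq:triple_to_dg}, and using the continuity bounds \eqref{eq:continuity_diff} and \eqref{eq:continuity_convection} together with \eqref{eq:negative_norm_proj} gives
\[
\|\overline\lambda_h^\ell-\lambda^*\|_{H^1(\mathcal{T}_h)^*}\lesssim \vvvert\cdot\vvvert + h.
\]
For $q=0$, \eqref{eq:nonlinear_subproblem_1} gives $o_h^k = $ a function whose cell means equal those of $u_h^k$. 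Since $u_h^k$ is only $\mathbb P_1$, we instead bound $\|u^*-\langle o_h\rangle\|$ using the Bregman term retained on the left in Step 1, or by writing
\[
\|u^*-\langle o_h\rangle\| \le \|u^*-\Pi_0 u^*\| + \|\Pi_0(u^*-\overline u_h^\ell)\| + \|\langle o_h\rangle-\Pi_0\langle o_h\rangle\|.
\]
The last term needs an $O(h)$ oscillation bound on $o_h$, which we expect to extract from the $L^\infty$ control of the latent variable. This final step is the least certain part of the plan.
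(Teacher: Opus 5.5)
Your argument for the main estimate is essentially the paper's own proof. The $\psi$-term becomes $(\psi_h^k-\psi_h^{k-1},o_h^k-u^*)$ via the $L^2$ projection onto $V_h^1$. A $k$-dependent flux test function gives control of the full triple norm. The residual $(\lambda^*,\Pi_hu^*-\overline u_h^\ell)$ is split using a reconstruction $\mathcal E_h$ and the variational inequality, and \eqref{eq:L2_estimate_Clement} plus Young's inequality close the estimate. The dual bound also follows the paper.

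Two small repairs. First, the paper's convention is $\lambda^*=\mathcal Lu^*-f$, not $f-\mathcal Lu^*$ as you wrote. Only with the paper's sign do you get $(\lambda^*,u^*-v)\le0$, and only with it is the residual $(\lambda^*,\Pi_hu^*-\overline u_h^\ell)$ rather than its negative. Second, your reconstruction should be stated explicitly as $\mathcal E_h(u_h,\hat u_h)=\mathcal C_h(u_h-\phi,\hat u_h-\phi)+\phi$, i.e.\ $\varepsilon=\phi-\mathcal C_h(\phi,\phi)$. Then the interior nodal values are convex combinations of cell means $|T|^{-1}\int_T(\overline u_h^\ell-\phi)\,\mathrm{d}x\ge0$; these follow by testing \eqref{eq:nonlinear_subproblem_1} with indicator functions, which is allowed for $q\in\{0,1\}$. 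The boundary nodal values equal $-\phi|_{\partial\Omega}\ge0$, because $\overline{\hat u}_h^\ell=0$ there and $\phi$ is constant.

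The genuine gap is the bound-preserving estimate for $q=0$. Your decomposition is correct, and since $\Pi_h^0\overline u_h^\ell=\Pi_h^0\langle o_h\rangle$, everything rests on the term $\|\langle o_h\rangle-\Pi_h^0\langle o_h\rangle\|_{L^2(\Omega)}$. Neither route you suggest controls it.
\begin{itemize}
\item There is no $h$- and $k$-uniform $L^\infty$ bound on the latent variable. Indeed $\psi_h^\ell=\psi_h^0-\sum_k\alpha_k\lambda_h^k$ drifts to $-\infty$ on the contact region, which is exactly how $o_h^\ell\to\phi$ there.
\item The Bregman term kept on the left of Step 1 is divided by $\sum_k\alpha_k$. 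It therefore only gives $(\mathcal D(u^*,o_h^\ell),1)\lesssim1+h^2\sum_k\alpha_k$, which does not tend to zero.
\end{itemize}

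The missing idea is structural, not a bound on $\psi_h$. For $q=0$ the iterates $\psi_h^k$ are piecewise constant, so on each element $o_h^k=\nabla\mathcal R^*(\psi_h^k)$ varies only through the $x$-dependence of $R^*(x,\cdot)$.
\begin{itemize}
\item If $\nabla\mathcal R^*$ is independent of $x$, then $\langle o_h\rangle\in V_h^0$ and the term vanishes. This is the paper's argument, $\Pi_h^0\overline u_h^\ell=\langle\nabla\mathcal R^*(\psi_h^\ell)\rangle$.
\item For $\nabla\mathcal R^*(\psi)=e^{\psi}+\phi$, one has $\langle o_h\rangle-\phi\in V_h^0$. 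The term then equals $\|\phi-\Pi_h^0\phi\|_{L^2(\Omega)}\lesssim h|\phi|_{H^1(\Omega)}$, which still fits the $h^2$ bound on the squared error.
\end{itemize}
Combine this with your other two terms to finish: use the approximation property of $\Pi_h^0$, and bound $\|u^*-\overline u_h^\ell\|_{L^2(\Omega)}$ via \eqref{eq:Poincare}, \eqref{eq:dg_to_triple} and the triple-norm estimate.
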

\begin{proof}
Following the proof of \Cref{thm:best_approximation}, we readily obtain that
\begin{align}
 (\nabla \mathcal{R}( o_h^{k}) - \nabla \mathcal{R}(o_h^{k-1}),  o_h^{k} -  u^*)
 & = -\alpha_{k} \mathcal{A}_\mathsf{D}( (\bm q_h^k, u_h^k, \hat u_h^k),  (\bm r_h, u_h^{k} - \Pi_h u^*, \hat v_h))\\
&  - \alpha_k \mathcal{A}_\mathsf{C}( (u_h^k, \hat u_h^k), (u_h^k - \Pi_h u^*, \hat v_h) )  + \alpha_k (f ,  u_h^k - \Pi_h u^*)  \nonumber.
\end{align}
for any $\bm r_h \in \bm \Sigma_{h}^p$ and $\hat v_h \in M_{h,0}^p$.  Select $\hat v_h = \hat u_h^k - \hat \pi_h u^*$ and define $(\bm e^k, e^k , e_\mathsf{t}^k)  = (\bm q_h^k - \bm \Pi_h \bm q^*, u^k_h - \Pi_h u^*, \hat u_h^k - \hat \pi_h u^*)$, where $\bm \Pi_h$, $\Pi_h$  and $\hat \pi_h$ are the $L^2$ projections onto $\bm \Sigma_h^p$, $V_h^p$ and $M_{h,0}^p$, respectively. 
With the three point identity \eqref{eq:three_point_identity} and the fact that $\mathcal{D}(o_h^{k}, o_h^{k-1}) \geq 0 $, we obtain that
\begin{align}
\int_\Omega \mathcal{D}( u^*, o_h^{k}) \mathrm{d}x  + \alpha_k & \mathcal{A}_\mathsf{D}( (\bm e^k, e^k , e_\mathsf{t}^k) ,  (\bm r_h, e^{k} , e_\mathsf{t}^k)) + \alpha_k \mathcal{A}_\mathsf{C}((e^k, e_\mathsf{t}^k),(e^k, e_\mathsf{t}^k))  \\ \nonumber &  \leq \int_{\Omega} \mathcal{D}(u^*, o_h^{k-1}) \mathrm{d}x  -  \alpha_{k} \mathcal{A}_\mathsf{D}( (\bm \Pi_h \bm q^*, \Pi_h u^*, \hat \pi_h u^*),  (\bm r_h, e^{k} ,e_\mathsf{t}^k))  \\  & \nonumber
- \alpha_k\mathcal{A}_\mathsf{C}((\Pi_h u^*, \hat \pi_h u^*),(e^k, e_\mathsf{t}^k)) + \alpha_k (f , e^k)  .
\end{align}
From \cite{egger2010hybrid}, there exists $\bm r^k_h \in \bm \Sigma_h^p$ such that
\begin{align} \label{eq:coercivity_AD}
\mathcal{A}_\mathsf{D}( (\bm e^k, e^k , e_\mathsf{t}^k) ,  (\bm r^k_h, e^{k} , e_\mathsf{t}^k)) \gtrsim \vvvert(\bm e^k, e^k , e_\mathsf{t}^k)  \vvvert^2 , \quad \vvvert (\bm r^k_h, e^k, e_\mathsf{t}^k) \vvvert \lesssim \vvvert(\bm e^k, e^k , e_\mathsf{t}^k) \vvvert.
\end{align}
Note that the hidden constants above are independent of $k$ and $h$.
Along with \eqref{eq:nonnegativity_convection}, this yields
\begin{align}
 \int_\Omega \mathcal{D}( u^*, o_h^{k}) \mathrm{d}x  + \alpha_k \vvvert  (\bm e^k, e^k , e_\mathsf{t}^k) \vvvert^2   &  \lesssim \int_{\Omega} \mathcal{D}(u^*,  o_h^{k-1}) \mathrm{d}x  \\ & -  \alpha_{k} \mathcal{A}_\mathsf{D}( (\bm \Pi_h \bm q^*, \Pi_h u^*, \hat \pi_h u^*),  (\bm r^k_h, e^{k} , e_\mathsf{t}^k)) \nonumber \\ 
 & - \alpha_k\mathcal{A}_\mathsf{C}((\Pi_h u^*, \hat \pi_h u^*),(e^k, e_\mathsf{t}^k)) +  \alpha_k (f ,  e^k) . \nonumber
\end{align}
We now sum the above bound from $k=1$ to $k = \ell$, use that $\mathcal{D}(u^*, o_h^\ell) \geq 0$ for any $\ell$, and divide by $\sum_{k=1}^\ell \alpha_k$.  This yields
\begin{align} \label{eq:error_rate_FOSPG_0}
  \frac{ \sum_{k=1}^\ell \alpha_k \vvvert (\bm e^k, e^k, e_\mathsf{t}^k) \vvvert^2 }{\sum_{k=1}^\ell \alpha_k}   & \leq  \frac{\int_{\Omega} \mathcal{D}(u^*, o_h^{0}) \mathrm{d}x}{\sum_{k=1}^\ell \alpha_k} -  \mathcal{A}_\mathsf{D}( (\bm \Pi_h \bm q^*, \Pi_h u^*, \hat \pi_h u^*),  (\overline{\bm r}^\ell_h, \overline e^{\ell} , \overline{e}_{\mathsf{t}}^\ell)) \\ &-  \mathcal{A}_\mathsf{C}((\Pi_h u^*, \hat \pi_h u^*),(\overline e^\ell, \overline e_\mathsf{t}^\ell)) + (f ,  \overline e^\ell) . \nonumber
\end{align}
Denote the sum of the second and third terms by $W$. Using \Cref{lemma:consistency}, we obtain
\begin{align}
W & =  - \mathcal{A}_\mathsf{D}( (\bm \Pi_h \bm q^* - \bm q^*, \Pi_h u^* - u^*, \hat \pi_h u^* - u^*), (\overline{\bm r}^\ell_h, \overline e^{\ell} , \overline{e}_{\mathsf{t}}^\ell)) \\ & \nonumber - \mathcal{A}_\mathsf{C}((\Pi_h u^* - u^*, \hat \pi_h u^* - u^*),(\overline e^\ell, \overline e_\mathsf{t}^\ell)) - \mathcal{A}(\bm q^*, u^*, \overline e^\ell) : = W_1 + W_2 +W_3.
 \end{align}
We proceed to bound  $W_1, W_2$, and $W_3$. Bounding $W_1$ and $W_2$ follows standard arguments, and we skip the details for brevity. Following  \cite{egger2010hybrid} and using the approximation properties of the $L^2$ projection, we bound
\begin{align} \label{eq:error_rate_FOSPG_1}
W_1 &  \lesssim \left( \vvvert (\bm \Pi_h \bm q^* - \bm q^*, \Pi_h u^* - u^*, \hat \pi_h u^* - u^*) \vvvert^2 + h\|(\bm \Pi_h \bm q^* - \bm q^*)\cdot \bm n \|_{\Gamma_h}^2 \right)^{1/2} \vvvert (\overline{\bm r}^\ell_h, \overline e^{\ell} , \overline{e}_{\mathsf{t}}^\ell) \vvvert \\
& \lesssim h \|u^* \|_{H^{2}(\Omega)} \vvvert (\overline{\bm r}^\ell_h, \overline e^{\ell} , \overline{e}_{\mathsf{t}}^\ell) \vvvert.  \nonumber
\end{align}
Similarly, we have that
\begin{align}\label{eq:error_rate_FOSPG_2}
W_2 &  \lesssim (\|\Pi_h u^* - u^*\|_{L^2(\Omega)}^2 + h\|\Pi_h u^* - u^*\|^2_{L^2(\Gamma_h)} + h \|\hat \pi_h u^* - u^*\|_{L^2(\Gamma_h)}^2)^{1/2} \vvvert (\overline{\bm r}^\ell_h, \overline e^{\ell} , \overline{e}_{\mathsf{t}}^\ell) \vvvert  \\
\nonumber
& \lesssim h^2  \|u^* \|_{H^{2}(\Omega)} \vvvert (\overline{\bm r}^\ell_h, \overline e^{\ell} , \overline{e}_{\mathsf{t}}^\ell) \vvvert.
\end{align}
To handle $W_3$, we define the following reconstruction operator using the definition of $\mathcal{C}_h$ of \Cref{lemma:modified_Clement}:
\[  \mathcal{E}_h(u_h ,\hat u_h) := \mathcal{C}_h(u_h - \phi, \hat u_h - \phi) + \phi.
\]
Observe that $\mathcal{C}_h(\overline{u}^\ell_h - \phi,  \overline{\hat u}_h^\ell - \phi)(z)  \geq 0$  for $z \in \mathcal{N}_h$ since $\int_{K} (u_h^\ell - \phi)\mathrm{d} x \geq 0$ thanks to \eqref{eq:nonlinear_subproblem_1}. Further, since  $\hat u_h^\ell = 0$ and $\phi = \mathrm{constant} \leq 0$ on $\partial \Omega$,  we have that $\mathcal{C}_h(\overline{u}^\ell_h - \phi,  \overline{\hat u}_h^\ell - \phi)(z) = - \phi \geq 0$ for $z \in \partial \Omega$.  As such, $\mathcal{E}_h(\overline u_h^\ell ,\overline{\hat u}_h^\ell) \geq \phi$  and $ \mathcal{E}_h(\overline u_h^\ell ,\overline{\hat u}_h^\ell) \in H^1_0(\Omega)$ for all $\ell$.   Thus, $\mathcal{E}_h(\overline u_h^\ell ,\overline{\hat u}_h^\ell) \in K$.  Proceeding, we drop the dependence on $\overline{\hat u}_h^\ell$ and  denote by $\mathcal{E}_h \overline u_h^\ell:= \mathcal{E}_h(\overline u_h^\ell ,\overline{\hat u}_h^\ell) \in K$ to simplify the notation.  We write
\begin{align} \label{eq:error_rate_FOSPG_3}
W_3 &  = - \mathcal{A}(\bm q^*, u^*, \overline u_h^\ell - u^*) - \mathcal{A}(\bm q^*, u^*,  u^* - \Pi_h u^*)  \\  \nonumber
& = - \mathcal{A}(\bm q^*, u^*, \overline u_h^\ell -\mathcal{E}_h\overline u_h^\ell )  -  \mathcal{A}(\bm q^*, u^*, \mathcal{E}_h\overline u_h^\ell - u^*) -  \mathcal{A}(\bm q^*, u^*,  u^* - \Pi_h u^*)  \\ \nonumber
& \leq - \mathcal{A}(\bm q^*, u^*, \overline u_h^\ell -\mathcal{E}_h\overline u_h^\ell )  - (f, \mathcal{E}_h\overline u_h^\ell - u^*) -  \mathcal{A}(\bm q^*, u^*,  u^* - \Pi_h u^*),  \nonumber
\end{align}
where we used \eqref{eq:general_VI} for the last bound.  Collecting \eqref{eq:error_rate_FOSPG_1},\eqref{eq:error_rate_FOSPG_2}, \eqref{eq:error_rate_FOSPG_3} in \eqref{eq:error_rate_FOSPG_0} yields
\begin{align}\label{eq:error_rate_FOSPG_4}
 \frac{ \sum_{k=1}^\ell \alpha_k \vvvert (\bm e^k, e^k, e_\mathsf{t}^k) \vvvert^2 }{\sum_{k=1}^\ell \alpha_k}   & \lesssim  \frac{\int_{\Omega} \mathcal{D}(u^*, o_h^{0}) \mathrm{d}x }{\sum_{k=1}^\ell \alpha_k}  + h  \|u^* \|_{H^{2}(\Omega)} \vvvert (\overline{\bm r}^\ell_h, \overline e^{\ell} , \overline{e}_{\mathsf{t}}^\ell) \vvvert  \\
 \nonumber   &  +\mathcal{A}(\bm q^*, u^*, \mathcal{E}_h\overline u_h^\ell - \overline u_h^\ell )  + \mathcal{A}(\bm q^*, u^*,   \Pi_h u^* - u^*)  \\
 \nonumber & + (f,  u^* - \Pi_h u^*) + (f, \overline u_h^\ell - \mathcal{E}_h\overline u_h^\ell ).
\end{align}
Considering \eqref{eq:def_A_FOSPG_err} and the Cauchy-Schwarz inequality, the sum of the last four terms above, denoted by $W_4$, is bounded by
\begin{align} \label{eq:error_rate_FOSPG_5}
W_4 \lesssim (\|u^*\|_{H^2(\Omega)} + \|f\|_{L^2(\Omega)}) (\| \mathcal{E}_h\overline u_h^\ell - \overline u_h^\ell \|_{L^2(\Omega)} + \|\Pi_h u^* - u^*\|_{L^2(\Omega)}).
\end{align}
To handle $\|\mathcal{E}_h\overline u_h^\ell - \overline u_h^\ell \|_{L^2(\Omega)}$, we use \Cref{lemma:modified_Clement} and the fact that $\mathcal{C}_h (w, w)$ for $w \in H^1(\Omega)$ is the same map analyzed in \cite[Lemma 4.2]{aprioriPG}. We bound
\begin{multline}
\label{eq:error_rate_FOSPG_6}
\| \mathcal{E}_h\overline u_h^\ell - \overline u_h^\ell \|_{L^2(\Omega)}  \leq \|\mathcal{C}_h(\overline{e}^\ell, \overline{e}_{\mathsf{t}}^\ell ) - \overline{e}^\ell\|_{L^2(\Omega)} \\  + \|\mathcal{C}_h(\Pi_h u^*, \hat \pi_h u^*) - \Pi_h u^*\|_{L^2(\Omega)} + \|\mathcal{C}_h(\phi, \phi) - \phi \|_{L^2(\Omega)}:= W_{4,1} +W_{4,2}+ W_{4,3}.
\end{multline}
For the first term, we apply \Cref{lemma:modified_Clement}
\begin{align} \label{eq:first_term_W4}
W_{4,1}    \lesssim h \left(\|\overline{e}^\ell\|^2_{\mathrm{DG}} +   \sum_{F \in \Gamma_h^\partial }h_{F}^{-1} \| \overline e^\ell - \overline{e}_{\mathsf{t}}^\ell \|^2_{L^2(F)} \right)^{1/2}  \lesssim h \vvvert (\overline{e}^\ell, \overline e^{\ell} , \overline{e}_{\mathsf{t}}^\ell) \vvvert,
\end{align}
where we used that $\overline{e}_{\mathsf{t}}^\ell \in M_{h,0}^p$ and \eqref{eq:dg_to_triple}.
For the second term, we write
\[ W_{4,2} \leq \|\mathcal{C}_h(\Pi_h u^* - I_h u^*, \hat \pi_h u^* - I_h u^*)\|_{L^2(\Omega)}  + \| \mathcal{C}_h (I_h u^* , I_h u^*) - \Pi_h u^* \|_{L^2(\Omega)}, \]
where $I_h u^* \in V_h \cap H^1_0(\Omega)$ is the Lagrange interpolant of $u^*$. We note that $\mathcal{C}_h(I_h u^* , I_h u^*)$ reduces to $\mathcal{C}_h(I_h u^*)$ where $\mathcal{C}_h$ is defined in  \cite[Lemma 4.2]{aprioriPG}.  Using the stability bound \eqref{eq:stability_clement_modified} of \Cref{lemma:modified_Clement} and the triangle inequality, we obtain
\begin{multline*}
W_{4,2} \lesssim \left( \|\Pi_h u^* - I_h u^*\|^2_{L^2(\Omega)} + \sum_{F \in \Gamma_h \cap \partial\Omega} h_F\|\Pi_h u^* - I_h u^*\|^2_{L^2(F)}\right)^{1/2} \\
+ \|\mathcal{C}_h(I_h u^*  -  u^*) \|_{L^2(\Omega)} + \|\mathcal{C}_hu^* - \Pi_h u^*\|_{L^2(\Omega)}.
\end{multline*}
The optimality and stability of $I_h$, $\mathcal{C}_h $ \cite[Lemma 4.2]{aprioriPG}, and $\Pi_h$, then give that
\begin{equation} \label{eq:second_term_W4}
W_{4,2} \lesssim h^2 \|u^*\|_{H^2(\Omega)}.
\end{equation}
For $W_{4,3}$, we use the optimality of  $\mathcal{C}_h $ to bound
\begin{equation} \label{eq:third_term_W4}
W_{4,3} \lesssim h^2 \|\phi\|_{H^2(\Omega)}.
\end{equation}
Collecting \eqref{eq:first_term_W4}, \eqref{eq:second_term_W4}, and \eqref{eq:third_term_W4} in \eqref{eq:error_rate_FOSPG_6} and using the resulting inequality and the  optimality of $\Pi_h$ in \eqref{eq:error_rate_FOSPG_5} and \eqref{eq:error_rate_FOSPG_4} , we arrive at
\begin{align}\label{eq:error_rate_FOSPG_7}
  \frac{ \sum_{k=1}^\ell \alpha_k \vvvert (\bm e^k, e^k, e_\mathsf{t}^k) \vvvert^2 }{\sum_{k=1}^\ell \alpha_k}   & \lesssim  \frac{\int_{\Omega} \mathcal{D}(u^*, o_h^{0}) \mathrm d x }{\sum_{k=1}^\ell \alpha_k}  + h ( \vvvert (\overline{\bm r}^\ell_h, \overline e^{\ell} , \overline{e}_{\mathsf{t}}^\ell) \vvvert  +\vvvert (\overline{e}^\ell, \overline e^{\ell} , \overline{e}_{\mathsf{t}}^\ell) \vvvert)  + h^2.
  \end{align}
Finally, we note that from \eqref{eq:coercivity_AD} and Cauchy--Schwarz inequality for sums,
\[
 \vvvert (\overline{\bm r}^\ell_h, \overline e^{\ell} , \overline{e}_{\mathsf{t}}^\ell) \vvvert^2 \leq \left( \frac{c}{\sum_{k=1}^\ell
 \alpha_k} \sum_{k=1}^\ell \alpha_k  \vvvert (\bm e^k, e^k, e_\mathsf{t}^k) \vvvert \right)^2   \leq \frac{c^2}{\sum_{k=1}^\ell
 \alpha_k} \sum_{k=1}^\ell \alpha_k  \vvvert (\bm e^k, e^k, e_\mathsf{t}^k) \vvvert^2,
\]
where $c$ is a constant independent of $h$, $k$, and $\{\alpha_k\}$.
The above and Young's inequality allows us to obtain that
\begin{align}\label{eq:error_rate_FOSPG_8}
 \frac12  \frac{ \sum_{k=1}^\ell \alpha_k \vvvert (\bm e^k, e^k, e_\mathsf{t}^k) \vvvert^2 }{\sum_{k=1}^\ell \alpha_k}   & \lesssim   \frac{\int_{\Omega} \mathcal{D}(u^*, o_h^{0}) \mathrm d x }{\sum_{k=1}^\ell \alpha_k} +   h \vvvert (\overline{e}^\ell, \overline e^{\ell} , \overline{e}_{\mathsf{t}}^\ell) \vvvert  + h^2.
  \end{align}
We use Jensen, Young's, triangle inequalities, and the observation that $\mathcal{D}(u^*, o_h^0) \leq C$ for some constant $C$ independent of $h$. We conclude the bound on $\vvvert (\overline {\bm q}_h^\ell - \bm q^* , \overline{u}_h^\ell - u^*, \overline{\hat u}_h^\ell - u^*)  \vvvert^2$.  To obtain the bound on
$\|\overline{\lambda}_h^\ell -\lambda^*\|_{H^1(\mathcal{T}_h)^*}$, we first observe that with \Cref{lemma:consistency} and the definition of $\lambda^*$,
\begin{align} \label{eq:err_eq_lambda}
(\overline{\lambda}_h^\ell - \lambda^*,& v_h)  = \mathcal{A}_\mathsf{D}((\overline {\bm q}^\ell_h - \bm q^*, \overline u^\ell_h - u^*, \overline{\hat u}^\ell_h - u^*), (\bm r_h, v_h, \hat v_h))  \\ & + \mathcal{A}_\mathsf{C}((\overline u^\ell_h - u^*, \overline{\hat u}^\ell_h -u^*), (v_h, \hat v_h)), ~\fa (\bm r_h, v_h , \hat v_h)\in  \bm \Sigma_h^p  \times V_h^p \times  M_{h,0}^p. \nonumber
\end{align}
We now use \eqref{eq:inf_sup_DG}, \eqref{eq:nonlinear_subproblem_0} with test function $(\bm0, w, \hat w)$ where $\hat w$ is given by \eqref{eq:triple_to_dg}, the observation that $ \vvvert (\bm 0,  w, \hat w) \vvvert \lesssim \|w\|_{\mathrm{DG}}$, the continuity properties of $\mathcal{A}_{\mathsf{D}}, \mathcal{A}_{\mathsf{C}}$ similar to \eqref{eq:error_rate_FOSPG_1} and \eqref{eq:error_rate_FOSPG_2}, and the proven bound on $\vvvert (\overline {\bm q}_h^\ell - \bm q^* , \overline{u}_h^\ell - u^*, \overline{\hat u}_h^\ell - u^*)  \vvvert^2$. Details are skipped for brevity. To show \eqref{eq:error_bound_preserving}, we note that $\langle \nabla \mathcal{R}^*(\psi_h^\ell) \rangle \in V_h^0$ and thus $\Pi^0_h \overline u_h^\ell =  \langle\nabla \mathcal{R}^*(\psi_h^\ell) \rangle$, the $L^2$ projection onto $V_h^0$. This implies that
\begin{align*}
\|u^*-   \langle\nabla \mathcal{R}^*(\psi_h^\ell) \rangle\|_{L^2(\Omega)} \leq \|u^* - \Pi^0_h u^*\|_{L^2(\Omega)} + \|\Pi^0_h (u^* - \overline u_h^\ell)\|_{L^2(\Omega)}.
\end{align*}
The stability and approximation properties of $\Pi^0_h$ along with the Poincar\`e inequality \eqref{eq:Poincare}, \eqref{eq:dg_to_triple}, and the proven bound on $\vvvert (\overline {\bm q}_h^\ell - \bm q^* , \overline{u}_h^\ell - u^*, \overline{\hat u}_h^\ell - u^*)  \vvvert$ yield the result.
\end{proof}

\subsection{FOSPG for  the semi--permeable membrane and scalar Signorini problems} \label{sec:semi_permeable}
Here, we again consider the nonsymmetric VI \eqref{eq:general_VI} with $\mathcal{L}: H^1_0(\Omega) \rightarrow H^{-1}(\Omega)$ but with constraints given on parts of the boundary $\partial \Omega$. Namely, we set
\begin{subequations}
\label{eq:boundary_obstacle}
\begin{align}
\mathcal{L} u  &= - \nabla \cdot (  \kappa \nabla u) + \beta \cdot \nabla  u ,  \\
 K  &= \{ v \in H^1_{\mathsf{D}}(\Omega) \mid  v \geq \phi \text{ a.e. on } \Gamma_{\mathsf{S}} \},
 \end{align}
\end{subequations}
where $\Gamma_{\mathsf{S}}$ and $\Gamma_{\mathsf{D}}$ form a non-overlapping partition of $\partial \Omega$,  $H^1_{\mathsf{D}}(\Omega)  = \{v \in H^1(\Omega) \mid v = 0 \text{ on } \Gamma_{\mathsf{D}} \}$, and  $\Gamma_{\mathsf{D}}$ has non-trivial measure.  This corresponds to \Cref{example:semi_permeable} where we set $\Gamma_\mathsf{N} = \emptyset$ for simplicity. Note that if $\beta = 0$, then \eqref{eq:boundary_obstacle} also models the scalar Signorini problem \cite{hild2012improved}.

Since the constraint is on a part $\Gamma_{\mathsf{S}}$ of the boundary, we set $W^q_h$ as the broken polynomial space over $\Gamma_\mathsf{S}$:
\begin{equation}
    W^q_h =  \;\bigg\{
  {\mu}_h\in L^2(\Gamma_\mathsf S) \mid
  {\mu}{}_{|_{F}}
\in \mathbb{P}_q(F),  \; \; \forall F \in \Gamma_h^\mathsf{S}
  \bigg\},  \label{eq:Wh_signorini}
\end{equation}
where $\Gamma_h^\mathsf{S}$ is the set of facets intersecting $\Gamma_{\mathsf{S}}$. Here, for simplicity, we assume that $\Gamma_h^\mathsf{S} \cap \Gamma_h^\partial = \emptyset$. 
The FOSPG algorithm for this class of problems is given \Cref{alg:FOSPG_boundary}. We reserve its analysis for future work. Here, we implement this algorithm in \Cref{num:semiperm} for the semi-permeable conditions presented \Cref{example:semi_permeable}.
\begin{algorithm}[htb]
  \caption{The Hybridized First Order System Proximal Galerkin Method for \eqref{eq:boundary_obstacle}}
  \label{alg:FOSPG_boundary}
  \begin{algorithmic}[1]\label{alg:main_alg_disc}
    \State \textbf{input:}  A discrete latent solution guess $\hat \psi_h^0 \in W^q_h$ with $W^q_h$ $(q \leq p)$ given in \eqref{eq:Wh_signorini} and a sequence of positive step sizes $\{\alpha_k\}$.
    \State Initialize \(k = 1\).
    \State \textbf{repeat}
    \State \quad
    Find $(\bm q^{k}_h, u_h^{k} , \hat u^{k}_h) \in \bm \Sigma_h^p  \times V_h^p \times  M_{h,g}^p$ and $\hat \psi_h^{k} \in W^q_h$ such that
    \begin{subequations}
      \label{eq:nonlinear_subproblem_boundary}
      \begin{alignat}{2}
        \mathcal{A}_\mathsf{D}((\bm q^k_h, u^k_h, \hat u^k_h), (\bm r_h, v_h, \hat v_h))  + \mathcal{A}_\mathsf{C}((u^k_h, \hat u^k_h), (v_h, \hat v_h)) +   \frac{1}{\alpha_k}(\hat \psi_h^{k} -  \hat \psi_h^{k-1} , \hat v_h)_{\Gamma_\mathsf{S}} & =  (f,v_h) ,  \label{eq:nonlinear_subproblem_boundary_0} \\
        (\hat u_h^{k}, \hat w_h)_{\Gamma_{\mathsf{S}}}  - (\nabla \mathcal{R}^*(\hat \psi_h^{k}), \hat w_h)_{\Gamma_{\mathsf{S}}}                                               & = 0,  \label{eq:nonlinear_subproblem_boundary_1}
      \end{alignat}
    \end{subequations}
    for all $ (\bm r_h , v_h, \hat v_h) \in  \Sigma_h^p \times V_h^p \times M_{h,0}^p $ and  $\hat w_h \in W^q_h$.
    \State \quad Assign \(k \gets k + 1\).
    \State \textbf{until} a convergence test is satisfied.
  \end{algorithmic}
\end{algorithm}

\begin{remark}[Solving for $(\hat u_h^k, \hat \psi_h^k)$]
Using static condensation, one can reformulate \eqref{eq:nonlinear_subproblem} in terms of the facet multipliers $(\hat u_h^k, \hat \psi_h^k) $ only. As such, the internal degrees of freedom representing $(\bm q_h^k, u_h^k)$ need not be recovered in every proximal step, but only after convergence. This improves the computational efficiency of \Cref{alg:FOSPG_boundary}.
\end{remark}

\section{Numerical Experiments}
\label{sec:numerical_experiments}
We provide a series of numerical examples that report the convergence behavior of our methods and that demonstrate their performance on Examples \ref{example:obstacle_type}--\ref{example:dam}.
\subsection{Convergence Rates}
\label{sec:conv}
In this first  example, we compute error rates for \Cref{alg:FOSPG_1} applied to \Cref{example:obstacle_type} with $\kappa =\operatorname{Id}$,  $\beta = (1,1)$, $c = 0$.
We modify the example from \cite[Section 4.8.4]{keith2023proximal}.
We set $\Omega=(-1,1)^2$, and use structured triangular meshes $\{\mathcal{T}_h\}$ with mesh size $h$.
\begin{equation*}
\begin{aligned}
   \phi = \begin{cases}
        \sqrt{ 1/4 - r^2} & \mathrm{if } \quad r \leq 9/20, \\
        \varphi (r) & \mathrm{otherwise},
    \end{cases} \qquad \text{where } r = \sqrt{x^2 + y^2}\,.
\end{aligned}
\end{equation*}
In the above, $\varphi(r)$ is the unique $C^1$ linear extension of $r \mapsto \sqrt{1/4 - r^2}$ for $r > 9/20$. The exact solution is given by
\begin{equation}
        u = \begin{cases}
    Q  \ln \sqrt{x^2+y^2} &\mathrm{if } \quad \sqrt{x^2 +y^2} >  a, \\
    \phi & \mathrm{otherwise},
    \end{cases}
    \label{eq:circular_sol}
\end{equation}
where $a= \exp(W_{-1}(-1/(2e^2))/2 + 1)\approx 0.34898$ where $W_{-1}(\cdot)$ is the $-1$-branch of the Lambert W-function, and $Q = \sqrt{1/4-a^2}/\ln a$.
We change the forcing $f = \beta \cdot \nabla u$.
One can only expect at most an order of $5/2$ (resp.\ $3/2$) for $\|u-u_h\|_{L^2(\Omega)}$ (resp.\ $\|\bm q - \bm q_h\|_{L^2(\Omega)} $) since $u \in H^{5/2-\epsilon}(\Omega)$.
We set $\psi_h^0 =0$, $\alpha_k = 2^{k-1}$, and $\nabla \mathcal{R}^*(\psi) = \exp(\psi) +\phi$ corresponding to the admissible set $K = \{u \in H^1_{\mathsf{D}}(\Omega) \vert \, u \geq \phi\}.$

In this example, we report results only for the FOSPG method with $h_0=1/16$, $p=1$, and $q=0$. We note that the conforming method exhibits qualitatively similar convergence behavior, but we omit these results for brevity.
In the following examples, we choose $\alpha_k=2^{k-1}$ and $\psi_h^0=0$.
\Cref{table:error_rates} presents the computed error rates with respect to $h$, while \Cref{fig:Cauchy_err} illustrates the optimization error history for both the iterates $u_h^k$ and the weighted averages $\overline{u}_h^k$.
In \Cref{fig:Cauchy_err}, we isolate the optimization error by measuring the distance between the current iterates (or averages) and the fully converged discrete solution, denoted by $u_h$.

Notably, we observe that the discretization error begins to dominate the total error after approximately 6--8 iterations.
We observe an optimization error of the averaged iterates $\{\overline{u}_h^k\}$ decay of $\mathcal{O}(2^{-k})$, which is faster than the theoretically predicted convergence rate of $\mathcal{O}(2^{-k/2})$ (cf. \Cref{thm:FOSPG_err_rate} and \cite[Corollary A.12]{keith2023proximal}). Furthermore, the individual iterates $\{u_h^k\}$ converge even faster than the averaged iterates.
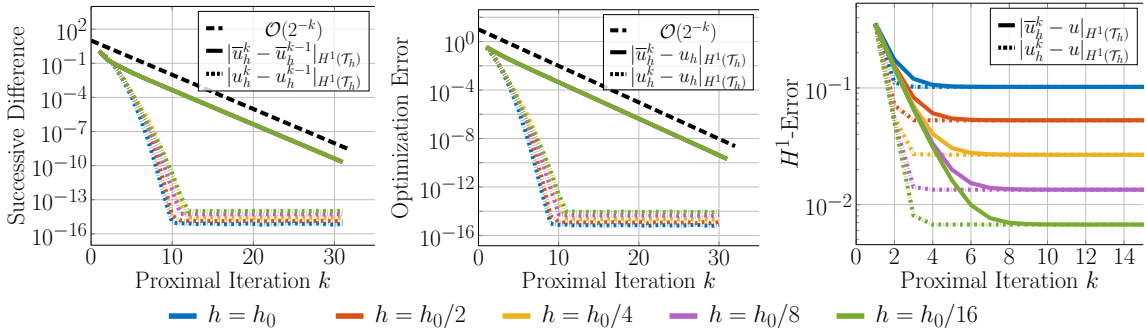
\begin{figure}[!t]
\centering
  \resizebox{0.32\textwidth}{!}{
\begin{tikzpicture}[font=\Huge]
	\begin{axis}[
			axis y line* = left,
			width        = 12cm,
			view         = {45}{65},
			xlabel={Proximal Iteration $k$},
			ylabel={Successive Difference},
			label style={font=\Huge},
			xmajorgrids,
			ymajorgrids,
            yminorgrids,
            xtick = {0, 10, 20, 30, 40},
            xmin = 0, xmax = 35,
            ymin = 1e-17, ymax = 1e04,
									            ytick = {1e-16, 1e-13, 1e-10, 1e-07, 1e-04, 1e-01, 1e02},
												ymode = log,
			            legend style={at={(0.98,0.98)}, inner sep=6pt, anchor=north east, font=\huge, fill=white, fill opacity=0.8, draw opacity=1, text opacity=1},
		]

		\addplot[color1, dash dot, line width=1.5mm ] table [x expr=\coordindex+1, y index={8}, col sep=comma, forget plot] {data/opt_result_0.csv};
		\addplot[color2, dash dot, line width=1.5mm ] table [x expr=\coordindex+1, y index={8}, col sep=comma, forget plot] {data/opt_result_1.csv};
		\addplot[color3, dash dot, line width=1.5mm ] table [x expr=\coordindex+1, y index={8}, col sep=comma, forget plot] {data/opt_result_2.csv};
		\addplot[color4, dash dot, line width=1.5mm ] table [x expr=\coordindex+1, y index={8}, col sep=comma, forget plot] {data/opt_result_3.csv};
		\addplot[color5, dash dot, line width=1.5mm ] table [x expr=\coordindex+1, y index={8}, col sep=comma, forget plot] {data/opt_result_4.csv};

		\addplot[color1, line width=1.5mm ] table [x expr=\coordindex+1, y index={9}, col sep=comma, forget plot] {data/opt_result_0.csv};
		\addplot[color2, line width=1.5mm ] table [x expr=\coordindex+1, y index={9}, col sep=comma, forget plot] {data/opt_result_1.csv};
		\addplot[color3, line width=1.5mm ] table [x expr=\coordindex+1, y index={9}, col sep=comma, forget plot] {data/opt_result_2.csv};
		\addplot[color4, line width=1.5mm ] table [x expr=\coordindex+1, y index={9}, col sep=comma, forget plot] {data/opt_result_3.csv};
		\addplot[color5, line width=1.5mm ] table [x expr=\coordindex+1, y index={9}, col sep=comma, forget plot] {data/opt_result_4.csv};

                \addplot [domain=0:32, samples=2, color=black, dash pattern=on 3mm off 1.5mm, line width=1.5mm, forget plot] {10*2^(-x)};

		                \addlegendimage{line width=1.5mm, color=black, dash pattern=on 3mm off 1.5mm}
        \addlegendentry{$\mathcal{O}(2^{-k})$}
        \addlegendimage{line width=1.5mm, color=black}
        \addlegendentry{$|\overline{u}_h^k-\overline{u}_h^{k-1}|_{H^1(\mathcal{T}_h)}$}
        \addlegendimage{line width=1.5mm, color=black, dash dot}
        \addlegendentry{$|u_h^k-u_h^{k-1}|_{H^1(\mathcal{T}_h)}$}
	\end{axis}
\end{tikzpicture}}
\resizebox{0.32\textwidth}{!}{
\begin{tikzpicture}[font=\Huge]
    \begin{axis}[
            axis y line* = left,
            width = 12cm,
            view = {45}{65},
            xlabel={Proximal Iteration $k$},
            ylabel={Optimization Error},
            label style={font=\Huge},
            xmajorgrids, ymajorgrids,
            xmin = 0, xmax = 35,
            xtick = {0, 10, 20, 30, 40},
            ytick = {10^-18,10^-16,10^-14,10^-12,10^-10,10^-8,10^-6,10^-4,10^-2,10^0,10^2,10^4},
            yticklabels = {,$10^{-16}$,,$10^{-12}$,,$10^{-8}$,,$10^{-4}$,,$10^0$,,$10^4$},
            ymode = log,
            legend style={at={(0.98,0.98)}, inner sep=5pt, anchor=north east, font=\huge, fill=white, fill opacity=0.8, draw opacity=1, text opacity=1},
        ]

                \addplot[color1, dash dot, line width=1.5mm, forget plot ] table [x expr=\coordindex+1, y index={6}, col sep=comma] {data/opt_result_0.csv};
        \addplot[color2, dash dot, line width=1.5mm, forget plot ] table [x expr=\coordindex+1, y index={6}, col sep=comma] {data/opt_result_1.csv};
        \addplot[color3, dash dot, line width=1.5mm, forget plot ] table [x expr=\coordindex+1, y index={6}, col sep=comma] {data/opt_result_2.csv};
        \addplot[color4, dash dot, line width=1.5mm, forget plot ] table [x expr=\coordindex+1, y index={6}, col sep=comma] {data/opt_result_3.csv};
        \addplot[color5, dash dot, line width=1.5mm, forget plot ] table [x expr=\coordindex+1, y index={6}, col sep=comma] {data/opt_result_4.csv};

                \addplot[color1, line width=1.5mm, forget plot ] table [x expr=\coordindex+1, y index={7}, col sep=comma] {data/opt_result_0.csv};
        \addplot[color2, line width=1.5mm, forget plot ] table [x expr=\coordindex+1, y index={7}, col sep=comma] {data/opt_result_1.csv};
        \addplot[color3, line width=1.5mm, forget plot ] table [x expr=\coordindex+1, y index={7}, col sep=comma] {data/opt_result_2.csv};
        \addplot[color4, line width=1.5mm, forget plot ] table [x expr=\coordindex+1, y index={7}, col sep=comma] {data/opt_result_3.csv};
        \addplot[color5, line width=1.5mm, forget plot ] table [x expr=\coordindex+1, y index={7}, col sep=comma] {data/opt_result_4.csv};

                \addplot [domain=0:32, samples=2, color=black, dash pattern=on 3mm off 1.5mm, line width=1.5mm, forget plot] {10*2^(-x)};

                \addlegendimage{line width=1.5mm, color=black, dash pattern=on 3mm off 1.5mm}
        \addlegendentry{$\mathcal{O}(2^{-k})$}
        \addlegendimage{line width=1.5mm, color=black}
        \addlegendentry{$|\overline{u}_h^k-u_h|_{H^1(\mathcal{T}_h)}$}
        \addlegendimage{line width=1.5mm, color=black, dash dot}
        \addlegendentry{$|u_h^k-u_h|_{H^1(\mathcal{T}_h)}$}
    \end{axis}
\end{tikzpicture}}
\resizebox{0.32\textwidth}{!}{
\begin{tikzpicture}[font=\Huge]
    \begin{axis}[
            axis y line* = left,
            width = 12cm,
            view = {45}{65},
            xlabel={Proximal Iteration $k$},
            ylabel={$H^1$-Error},
            label style={font=\Huge},
            xmajorgrids, ymajorgrids,
            xmin = 0, xmax = 15,
            xtick = {0, 2, 4, 6, 8, 10, 12, 14},
                                    ymode = log,
            legend style={at={(0.98,0.98)}, anchor=north east, inner sep=6pt, font=\huge, fill=white, fill opacity=0.8, draw opacity=1, text opacity=1},
        ]

                \addplot[color1, dash dot, line width=1.5mm, forget plot ] table [x expr=\coordindex+1, y index={3}, col sep=comma] {data/opt_result_0.csv};
        \addplot[color2, dash dot, line width=1.5mm, forget plot ] table [x expr=\coordindex+1, y index={3}, col sep=comma] {data/opt_result_1.csv};
        \addplot[color3, dash dot, line width=1.5mm, forget plot ] table [x expr=\coordindex+1, y index={3}, col sep=comma] {data/opt_result_2.csv};
        \addplot[color4, dash dot, line width=1.5mm, forget plot ] table [x expr=\coordindex+1, y index={3}, col sep=comma] {data/opt_result_3.csv};
        \addplot[color5, dash dot, line width=1.5mm, forget plot ] table [x expr=\coordindex+1, y index={3}, col sep=comma] {data/opt_result_4.csv};

                \addplot[color1, line width=1.5mm, forget plot ] table [x expr=\coordindex+1, y index={4}, col sep=comma] {data/opt_result_0.csv};
        \addplot[color2, line width=1.5mm, forget plot ] table [x expr=\coordindex+1, y index={4}, col sep=comma] {data/opt_result_1.csv};
        \addplot[color3, line width=1.5mm, forget plot ] table [x expr=\coordindex+1, y index={4}, col sep=comma] {data/opt_result_2.csv};
        \addplot[color4, line width=1.5mm, forget plot ] table [x expr=\coordindex+1, y index={4}, col sep=comma] {data/opt_result_3.csv};
        \addplot[color5, line width=1.5mm, forget plot ] table [x expr=\coordindex+1, y index={4}, col sep=comma] {data/opt_result_4.csv};

                                \addlegendimage{line width=1.5mm, color=black}
        \addlegendentry{$|\overline{u}_h^k-u|_{H^1(\mathcal{T}_h)}$}
        \addlegendimage{line width=1.5mm, color=black, dash dot}
        \addlegendentry{$|u_h^k-u|_{H^1(\mathcal{T}_h)}$}
    \end{axis}
\end{tikzpicture}}
\begin{center}
\resizebox{0.7\textwidth}{!}{
\begin{tikzpicture}
        \node (l1) at (0,0) [label=right:{\Huge $h=h_0$}] {};
    \draw[color1, line width=2mm] (l1.west) -- ++(-1,0);

    \node (l2) at (6,0) [label=right:{\Huge $h=h_0/2$}] {};
    \draw[color2, line width=2mm] (l2.west) -- ++(-1,0);

    \node (l3) at (12,0) [label=right:{\Huge $h=h_0/4$}] {};
    \draw[color3, line width=2mm] (l3.west) -- ++(-1,0);

    \node (l4) at (18,0) [label=right:{\Huge $h=h_0/8$}] {};
    \draw[color4, line width=2mm] (l4.west) -- ++(-1,0);

    \node (l5) at (24,0) [label=right:{\Huge $h=h_0/16$}] {};
    \draw[color5, line width=2mm] (l5.west) -- ++(-1,0);
\end{tikzpicture}}
\end{center}

\caption{
Convergence history of the proximal iterates $u_h^k$ and averages $\overline{u}_h^k$ for the circular obstacle problem (\eqref{eq:circular_sol} in \Cref{sec:conv}) with step size $\alpha_k=2^{k-1}$ and initial mesh size $h_0=1/16$. Left: The Cauchy error, measured by the successive differences of the iterates. Center: The optimization error relative to the converged discrete solution $u_h$. Right: The total error $|u - u_h^k|_{H^1(\mathcal{T}_h)}$.
\label{fig:Cauchy_err}
}
\end{figure}\begin{table}[!t]
    \begin{tabular}{|c||c|c|c|c|c|c|c|c|}\hline
    &$h/h_0$&$k$&\multicolumn{2}{c}{$\|u_h^k-u\|_{L^2(\Omega)}$}&\multicolumn{2}{|c|}{$\|\overline{u}_h^k-u\|_{L^2(\Omega)}$}&\multicolumn{2}{c|}{$\|\bm{q}_h^k-\bm{q}\|_{L^2(\Omega)}$}\\\hline\hline
    \multirow{5}{*}{$\|u_h^k-u_h^{k-1}\|_{L^2(\Omega)}\leq 10^{-10}$}    &1  & 14 & 2.352e-02& -& 2.263e-02& -& 1.661e-01& -\\
    &1/2  & 15 & 5.799e-03& 2.02& 5.585e-03& 2.02& 6.563e-02& 1.34\\
    &1/4 & 15 & 1.446e-03& 2.00& 1.355e-03& 2.04& 2.682e-02& 1.29\\
    &1/8 & 18 & 3.240e-04& 2.16& 4.130e-04& 1.71& 9.905e-03& 1.44\\
    &1/16 & 17 & 7.981e-05& 2.02& 9.835e-05& 2.07& 3.919e-03& 1.34\\\hline
    \multirow{5}{*}{$\|\overline{u}_h^k-\overline{u}_h^{k-1}\|_{L^2(\Omega)}\leq 10^{-10}$}    &1  & 37 & 2.352e-02&-&2.352e-02&-&1.661e-01&-\\
    &1/2  & 37 & 5.799e-03&2.02 &5.799e-03&2.02&6.563e-02&1.34\\
    &1/4 & 37 & 1.446e-03&2.00 &1.446e-03&2.00&2.682e-02&1.29\\
    &1/8 & 37 & 3.240e-04&2.16 &3.240e-04&2.16&9.905e-03&1.44\\
    &1/16 & 37 & 7.981e-05&2.02 &7.981e-05&2.02&3.919e-03&1.34\\\hline
    \end{tabular}
    \caption{
    Convergence behavior under uniform refinement with $h_0\approx 1/16$ with two different stopping criteria.
    The number of proximal iterations stays bounded with $h$ for both stopping criteria.
    }
    \label{table:error_rates}
\end{table}

The next example exhibits a biactive solution. Many optimization algorithms struggle when there is a biactive region $D$ with positive measure such that
\begin{equation*}
    \mathcal{L}u=F\text{ and }u=\phi\text{ a.e. }x\in D\subset\Omega.
\end{equation*}
Consider a solution $u\in H^3(\Omega)$ on a domain $\Omega=(-1,1)^2$ defined by
\begin{equation}
    u(x,y)=\begin{cases}x^4&\text{ when }(x,y)\in(0,1)\times(-1,1),\\0&\text{ otherwise,}
    \end{cases}
    \label{eq:biactive}
\end{equation}
and an obstacle $\phi=0$.
We choose $F$ so that $\mathcal{L}u=F$; consequently, the biactive region is $(-1,0)\times(-1,1)$.
The convergence history is depicted in \Cref{fig:Cauchy_err_biactive}.
We observe that the average iterates converge at a rate of $\mathcal{O}(2^{-k})=\mathcal{O}(1/\sum_{k}\alpha_k)$, consistent with the behavior observed in the previous example.
However, the individual iterates behave differently.
Unlike the strict complementarity case, the individual iterates initially decay at a rate comparable to the average iterates $\{\overline{u}_h^k\}$.
Following this initial phase, they achieve a faster rate of convergence.
This eventual acceleration is likely attributed to the identification of the active set, which effectively reduces the problem to an unconstrained variational equation on the inactive set.

\begin{remark}[Fast Optimization Error]
Empirically, the optimization error converges at a rate of $\mathcal{O}(2^{-k}) = \mathcal{O}(1/\sum_k \alpha_k)$, exceeding the theoretically predicted rate of $\mathcal{O}(1/(\sum_k \alpha_k)^{1/2})$.
This accelerated convergence persists even in instances exhibiting a biactive solution.
This suggests that, under suitable additional assumptions, our error estimates could be sharpened for this specific class of problems.
\end{remark}

\begin{figure}[t]
\centering
  \resizebox{0.32\textwidth}{!}{
\begin{tikzpicture}[font=\Huge]
	\begin{axis}[
			axis y line* = left,
			width        = 12cm,
			view         = {45}{65},
			xlabel={Proximal Iteration $k$},
			ylabel={Successive Difference},
			label style={font=\Huge},
			xmajorgrids,
			ymajorgrids,
            yminorgrids,
            xtick = {0, 10, 20, 30, 40},
            xmin = 0, xmax = 40,
            ymin = 1e-17, ymax = 1e04,
												            ytick = {1e-16, 1e-13, 1e-10, 1e-07, 1e-04, 1e-01, 1e02},
									ymode = log,
			            legend style={at={(0.98,0.98)}, inner sep=5pt, anchor=north east, font=\huge, fill=white, fill opacity=0.8, draw opacity=1, text opacity=1},
		]

		\addplot[color1, dash dot, line width=1.5mm ] table [x expr=\coordindex+1, y index={8}, col sep=comma, forget plot] {data/opt_result_biactive_0.csv};
		\addplot[color2, dash dot, line width=1.5mm ] table [x expr=\coordindex+1, y index={8}, col sep=comma, forget plot] {data/opt_result_biactive_1.csv};
		\addplot[color3, dash dot, line width=1.5mm ] table [x expr=\coordindex+1, y index={8}, col sep=comma, forget plot] {data/opt_result_biactive_2.csv};
		\addplot[color4, dash dot, line width=1.5mm ] table [x expr=\coordindex+1, y index={8}, col sep=comma, forget plot] {data/opt_result_biactive_3.csv};
		\addplot[color5, dash dot, line width=1.5mm ] table [x expr=\coordindex+1, y index={8}, col sep=comma, forget plot] {data/opt_result_biactive_4.csv};

		\addplot[color1, line width=1.5mm ] table [x expr=\coordindex+1, y index={9}, col sep=comma, forget plot] {data/opt_result_biactive_0.csv};
		\addplot[color2, line width=1.5mm ] table [x expr=\coordindex+1, y index={9}, col sep=comma, forget plot] {data/opt_result_biactive_1.csv};
		\addplot[color3, line width=1.5mm ] table [x expr=\coordindex+1, y index={9}, col sep=comma, forget plot] {data/opt_result_biactive_2.csv};
		\addplot[color4, line width=1.5mm ] table [x expr=\coordindex+1, y index={9}, col sep=comma, forget plot] {data/opt_result_biactive_3.csv};
		\addplot[color5, line width=1.5mm ] table [x expr=\coordindex+1, y index={9}, col sep=comma, forget plot] {data/opt_result_biactive_4.csv};

                \addplot [domain=0:37, samples=2, color=black, dash pattern=on 3mm off 1.5mm, line width=1.5mm, forget plot] {50*2^(-x)};

		                \addlegendimage{line width=1.5mm, color=black, dash pattern=on 3mm off 1.5mm}
        \addlegendentry{$\mathcal{O}(2^{-k})$}
        \addlegendimage{line width=1.5mm, color=black}
        \addlegendentry{$|\overline{u}_h^k-\overline{u}_h^{k-1}|_{H^1(\mathcal{T}_h)}$}
        \addlegendimage{line width=1.5mm, color=black, dash dot}
        \addlegendentry{$|u_h^k-u_h^{k-1}|_{H^1(\mathcal{T}_h)}$}
	\end{axis}
\end{tikzpicture}}
\resizebox{0.32\textwidth}{!}{
\begin{tikzpicture}[font=\Huge]
    \begin{axis}[
            axis y line* = left,
            width = 12cm,
            view = {45}{65},
            xlabel={Proximal Iteration $k$},
            ylabel={Optimization Error},
            label style={font=\Huge},
            xmajorgrids, ymajorgrids,
            xmin = 0, xmax = 40,
            xtick = {0, 10, 20, 30, 40},
            ytick = {10^-18,10^-16,10^-14,10^-12,10^-10,10^-8,10^-6,10^-4,10^-2,10^0,10^2,10^4},
            yticklabels = {,$10^{-16}$,,$10^{-12}$,,$10^{-8}$,,$10^{-4}$,,$10^0$,,$10^4$},
            ymode = log,
            legend style={at={(0.98,0.98)}, inner sep=5pt, anchor=north east, font=\huge, fill=white, fill opacity=0.8, draw opacity=1, text opacity=1},
        ]

                \addplot[color1, dash dot, line width=1.5mm, forget plot ] table [x expr=\coordindex+1, y index={6}, col sep=comma] {data/opt_result_biactive_0.csv};
        \addplot[color2, dash dot, line width=1.5mm, forget plot ] table [x expr=\coordindex+1, y index={6}, col sep=comma] {data/opt_result_biactive_1.csv};
        \addplot[color3, dash dot, line width=1.5mm, forget plot ] table [x expr=\coordindex+1, y index={6}, col sep=comma] {data/opt_result_biactive_2.csv};
        \addplot[color4, dash dot, line width=1.5mm, forget plot ] table [x expr=\coordindex+1, y index={6}, col sep=comma] {data/opt_result_biactive_3.csv};
        \addplot[color5, dash dot, line width=1.5mm, forget plot ] table [x expr=\coordindex+1, y index={6}, col sep=comma] {data/opt_result_biactive_4.csv};

                \addplot[color1, line width=1.5mm, forget plot ] table [x expr=\coordindex+1, y index={7}, col sep=comma] {data/opt_result_biactive_0.csv};
        \addplot[color2, line width=1.5mm, forget plot ] table [x expr=\coordindex+1, y index={7}, col sep=comma] {data/opt_result_biactive_1.csv};
        \addplot[color3, line width=1.5mm, forget plot ] table [x expr=\coordindex+1, y index={7}, col sep=comma] {data/opt_result_biactive_2.csv};
        \addplot[color4, line width=1.5mm, forget plot ] table [x expr=\coordindex+1, y index={7}, col sep=comma] {data/opt_result_biactive_3.csv};
        \addplot[color5, line width=1.5mm, forget plot ] table [x expr=\coordindex+1, y index={7}, col sep=comma] {data/opt_result_biactive_4.csv};

                \addplot [domain=0:37, samples=2, color=black, dash pattern=on 3mm off 1.5mm, line width=1.5mm, forget plot] {50*2^(-x)};

                \addlegendimage{line width=1.5mm, color=black, dash pattern=on 3mm off 1.5mm}
        \addlegendentry{$\mathcal{O}(2^{-k})$}
        \addlegendimage{line width=1.5mm, color=black}
        \addlegendentry{$|\overline{u}_h^k-u_h|_{H^1(\mathcal{T}_h)}$}
        \addlegendimage{line width=1.5mm, color=black, dash dot}
        \addlegendentry{$|u_h^k-u_h|_{H^1(\mathcal{T}_h)}$}
    \end{axis}
\end{tikzpicture}}
\resizebox{0.32\textwidth}{!}{
\begin{tikzpicture}[font=\Huge]
    \begin{axis}[
            axis y line* = left,
            width = 12cm,
            view = {45}{65},
            xlabel={Proximal Iteration $k$},
            ylabel={$H^1$-Error},
            label style={font=\Huge},
            xmajorgrids, ymajorgrids,
            xmin = 0, xmax = 15,
            xtick = {0, 2, 4, 6, 8, 10, 12, 14},
                                    ymode = log,
            legend style={at={(0.98,0.98)}, anchor=north east, font=\huge, fill=white, fill opacity=0.8, draw opacity=1, text opacity=1},
        ]

                \addplot[color1, dash dot, line width=1.5mm, forget plot ] table [x expr=\coordindex+1, y index={3}, col sep=comma] {data/opt_result_biactive_0.csv};
        \addplot[color2, dash dot, line width=1.5mm, forget plot ] table [x expr=\coordindex+1, y index={3}, col sep=comma] {data/opt_result_biactive_1.csv};
        \addplot[color3, dash dot, line width=1.5mm, forget plot ] table [x expr=\coordindex+1, y index={3}, col sep=comma] {data/opt_result_biactive_2.csv};
        \addplot[color4, dash dot, line width=1.5mm, forget plot ] table [x expr=\coordindex+1, y index={3}, col sep=comma] {data/opt_result_biactive_3.csv};
        \addplot[color5, dash dot, line width=1.5mm, forget plot ] table [x expr=\coordindex+1, y index={3}, col sep=comma] {data/opt_result_biactive_4.csv};

                \addplot[color1, line width=1.5mm, forget plot ] table [x expr=\coordindex+1, y index={4}, col sep=comma] {data/opt_result_biactive_0.csv};
        \addplot[color2, line width=1.5mm, forget plot ] table [x expr=\coordindex+1, y index={4}, col sep=comma] {data/opt_result_biactive_1.csv};
        \addplot[color3, line width=1.5mm, forget plot ] table [x expr=\coordindex+1, y index={4}, col sep=comma] {data/opt_result_biactive_2.csv};
        \addplot[color4, line width=1.5mm, forget plot ] table [x expr=\coordindex+1, y index={4}, col sep=comma] {data/opt_result_biactive_3.csv};
        \addplot[color5, line width=1.5mm, forget plot ] table [x expr=\coordindex+1, y index={4}, col sep=comma] {data/opt_result_biactive_4.csv};

                                \addlegendimage{line width=1.5mm, color=black}
        \addlegendentry{$|\overline{u}_h^k-u|_{H^1(\mathcal{T}_h)}$}
        \addlegendimage{line width=1.5mm, color=black, dash dot}
        \addlegendentry{$|u_h^k-u|_{H^1(\mathcal{T}_h)}$}
    \end{axis}
\end{tikzpicture}}
\begin{center}
\resizebox{0.7\textwidth}{!}{
\begin{tikzpicture}
        \node (l1) at (0,0) [label=right:{\Huge $h=h_0$}] {};
    \draw[color1, line width=2mm] (l1.west) -- ++(-1,0);

    \node (l2) at (6,0) [label=right:{\Huge $h=h_0/2$}] {};
    \draw[color2, line width=2mm] (l2.west) -- ++(-1,0);

    \node (l3) at (12,0) [label=right:{\Huge $h=h_0/4$}] {};
    \draw[color3, line width=2mm] (l3.west) -- ++(-1,0);

    \node (l4) at (18,0) [label=right:{\Huge $h=h_0/8$}] {};
    \draw[color4, line width=2mm] (l4.west) -- ++(-1,0);

    \node (l5) at (24,0) [label=right:{\Huge $h=h_0/16$}] {};
    \draw[color5, line width=2mm] (l5.west) -- ++(-1,0);
\end{tikzpicture}}
\end{center}

\caption{
Convergence history of the proximal iterates $u_h^k$ and averages $\overline{u}_h^k$ for the biactive problem (\eqref{eq:biactive} in \Cref{sec:conv}) with step size $\alpha_k=2^{k-1}$.
Left: The Cauchy error, measured by the successive differences of the iterates. Center: The optimization error relative to the converged discrete solution $u_h$. Right: The total error $|u - u_h^k|_{H^1(\mathcal{T}_h)}$.
\label{fig:Cauchy_err_biactive}
}
\end{figure}
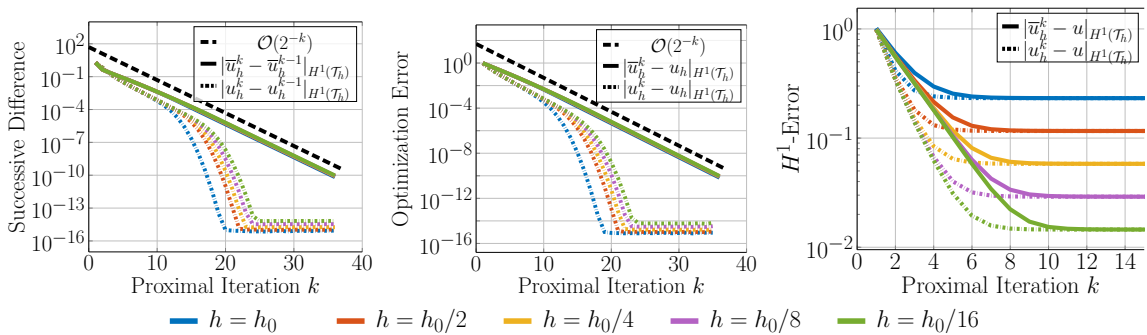
\subsection{American Option Pricing}
\label{subsec:american_option}
We consider \Cref{example:option_pricing} and we simulate the pricing of American put options under the Heston stochastic volatility model \cite{heston1993closed}. In particular, we set
\begin{equation*}
\kappa = \frac12 x_2 \begin{pmatrix}
1 &  \rho \xi \\
\rho \xi & \xi^2
\end{pmatrix}, \quad \beta = \begin{pmatrix}
    -r +\frac{x_2}{2} + \frac{\rho \xi}{2} \\
    -\omega (\theta- x_2) + \frac{\xi^2}{2}
\end{pmatrix}, \quad \phi = \max(K - K e^{x_1}, 0),
\end{equation*}
where $x_1 = \ln(S/K)$ with $S$ denoting the asset price and $K$ denoting the strike price, $x_2$ denotes the asset variance, $r$ is the risk free interest rate, $\rho \in(-1,1)$ is the correlation between Brownian motions, $\theta$ is the long term variance, $\omega$ the mean reverting speed of the variance, and $\xi$ the variance volatility \cite{mezzan2025lagrangian}. Our computational domain is $(x_1,x_2) \in \Omega := (\ln(0.01), \ln(100)) \times (10^{-6},5)$ with the following boundary conditions for all $t \in [0,T]$:
\begin{subequations}
\begin{alignat}{2}
u & = \phi, && \text{ if }  x_1 = \ln(0.01) \text{ or } y = 10^{-6}\\
\nabla  u \cdot \bm n & = 0, && \text{ if }  x_1 = \ln(100) \text{ or } y = 5.
\end{alignat}
\end{subequations}
We set the following parameters, following \cite{mezzan2025lagrangian,ikonen2009operator}
\begin{equation*}
K = 10, \; r = 0.1, \; T = 0.25, \; \omega = 5 ,\; \theta = 0.16, \; \xi = 0.9 , \; \rho = 0.1.
\end{equation*}
 Note that since $\nabla \cdot \beta \neq 0$, we modify the reaction term from $r u$ to $(r- \omega) u$ in order to apply the FOSPG method. We use backward Euler in time and \Cref{alg:FOSPG_1} in space at each time step. \Cref{fig:option_pricing} shows the converged solution $\nabla \mathcal{R}(\psi_h)$ and the estimated active set, $A= \{x\in\Omega:|\nabla R^*(\psi_h(x))-\phi(x)|<10^{-8})\}$, at the final time $T$.
\begin{table}[H]
\begin{tabular}{|c|ccccc|}
\hline
Variance & 8 & 9 & 10 & 11 & 12\\\hline
0.0625 ($u_h$)&1.9999 & 1.1054 & 0.5160 & 0.2126 & 0.0836 \\
0.0625 ($\nabla \mathcal R^*(\psi_h)$)&1.9994& 1.1059& 0.5161& 0.2132& 0.0842\\
\hline
0.2500 ($u_h$)&2.0761 & 1.3289 & 0.7903 & 0.4441 & 0.2412\\
0.2500 ($\nabla \mathcal R^*(\psi_h)$)&2.0764& 1.3292& 0.7904& 0.4448& 0.2421\\
\hline
\end{tabular}
\caption{The pricing of the American option of \Cref{example:option_pricing} and \Cref{subsec:american_option}  for varying variance levels using backward Euler in time and \Cref{alg:FOSPG_1} in space. The data we obtain are in strong agreement with several studies \cite{winkler2001valuation,mezzan2025lagrangian,ikonen2009operator}, with a maximum error of $\mathcal{O}(10^{-2}) $ at points of interest.
}
\label{table:option_pricing}
\end{table}

\begin{figure}[t]
    \centering
    \includegraphics[width=0.36\textwidth, clip]{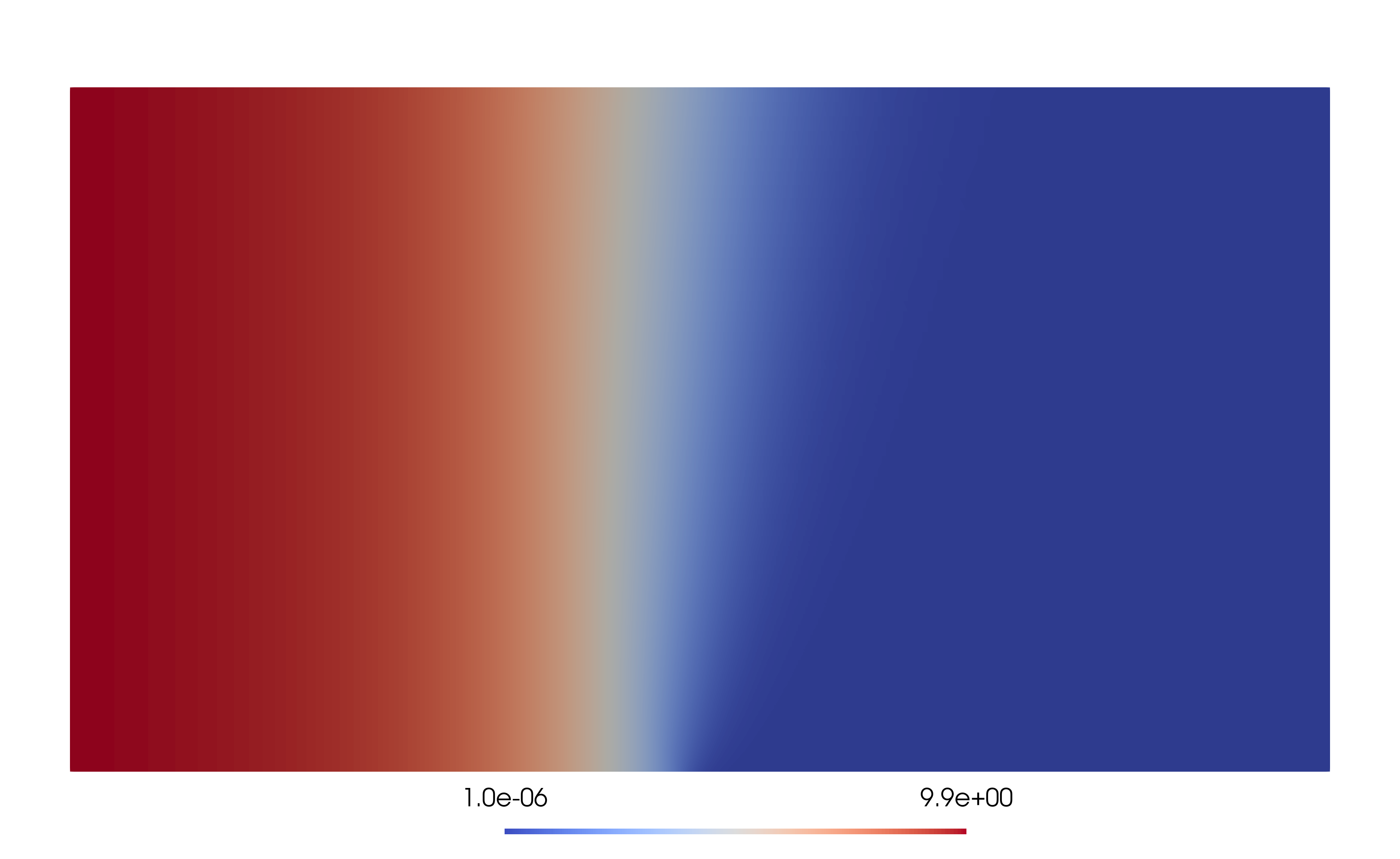}
    \hspace{2em}
    \includegraphics[width=0.36\textwidth, clip]{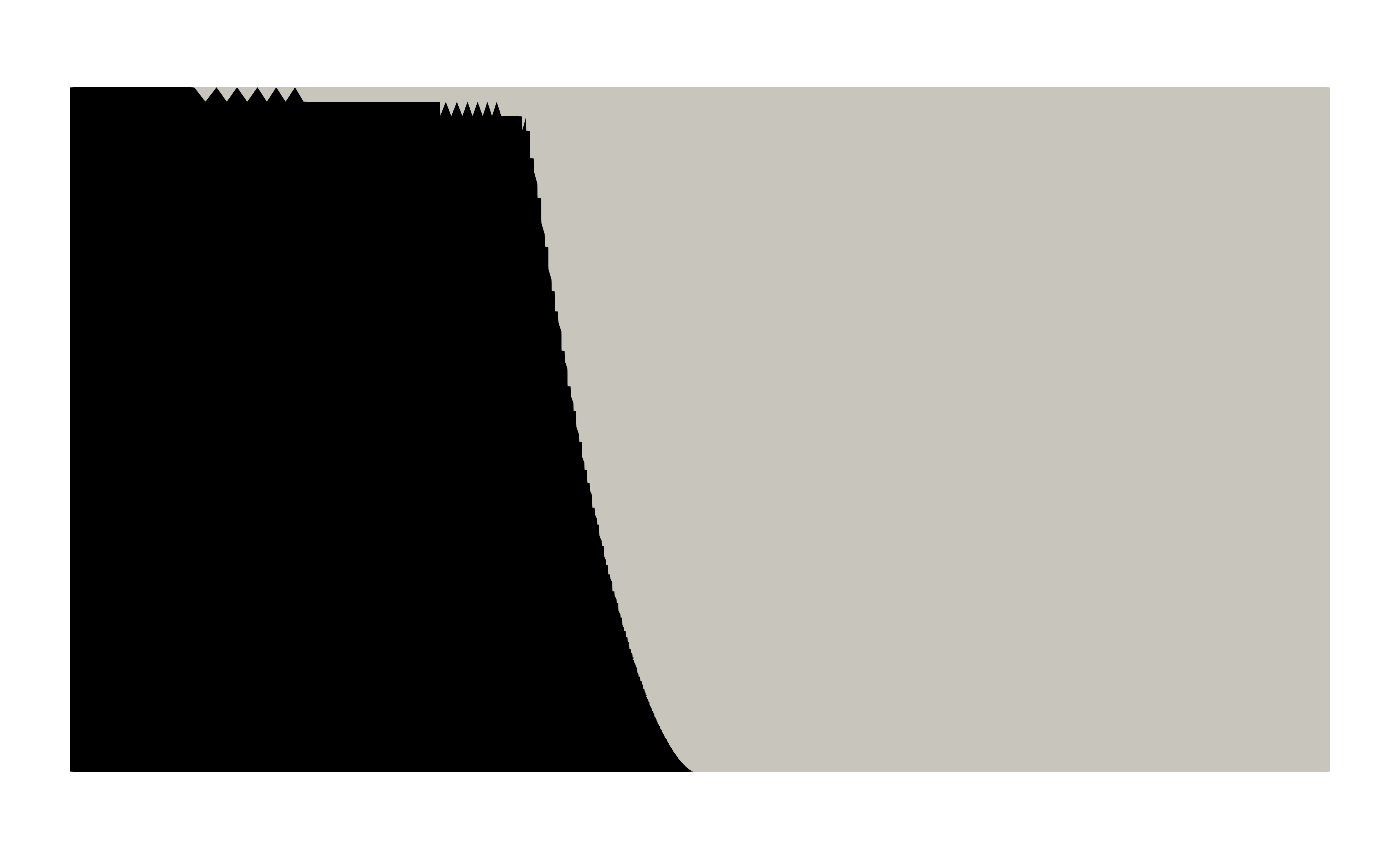}
    \caption{Left: The converged solution $\nabla \mathcal{R}^*(\psi_h)$ for the option pricing example at the final time $T = 0.25$. Right: The estimated active set at $T=0.25$.}
    \label{fig:option_pricing}
    \end{figure}

\subsection{Semi-Permeable Boundary Conditions}
\label{num:semiperm}

We model steady-state convection-diffusion coupled with a background Stokes flow in a punctured channel $\Omega=(-1,3)\times(-1,1)\setminus \overline{B_{0.3}(\bm{0})}$.
The boundary $\Gamma_S = \partial B_{0.3}(\bm{0})$ acts as a semi-permeable membrane imposing a unilateral constraint $u \geq \phi$, representing a physical threshold such as a saturation limit.
We apply homogeneous Neumann conditions on the outlet $\Gamma_N=\{3\}\times(-1,1)$ and homogeneous Dirichlet conditions on $\Gamma_D=\partial\Omega\setminus(\Gamma_S\cup \Gamma_N)$.
The feasible set (see \Cref{example:semi_permeable} for the exact formulation) is defined as:
\begin{equation*}
    K=\Big\{u\in H^1(\Omega)\mid u\geq \phi\text{ on }\Gamma_S, u=g_D\text{ on }\Gamma_D\Big\}.
\end{equation*}
The convective velocity $\beta=\bm{u}$ is precomputed using Taylor-Hood elements for the Stokes equations:
\begin{alignat*}{2}
    -\Delta \bm{u}+\nabla p&=0 \quad \text{ and } \quad \nabla\cdot\bm{u}=0 &&\text{ in }\Omega,\\
    \bm{u}&=((1-y)(1+y), 0)&&\text{ on }\Gamma_D,\\
    \bm{u}&=\bm{0} \text{ on }\Gamma_S, \quad (\nabla \bm{u}-p\bm{I})\bm{n}=\bm{0}&&\text{ on }\Gamma_N.
\end{alignat*}
\Cref{fig:semiperm} illustrates the solution for $\phi \in \{0.96, 0.98, 1\}$, where the constraint on $\Gamma_S$ is fully active, partially active, and inactive, respectively.

\begin{figure}[t]
    \centering
    \includegraphics[width=0.32\textwidth, trim={7.9cm, 3cm, 7.9cm, 10cm}, clip]{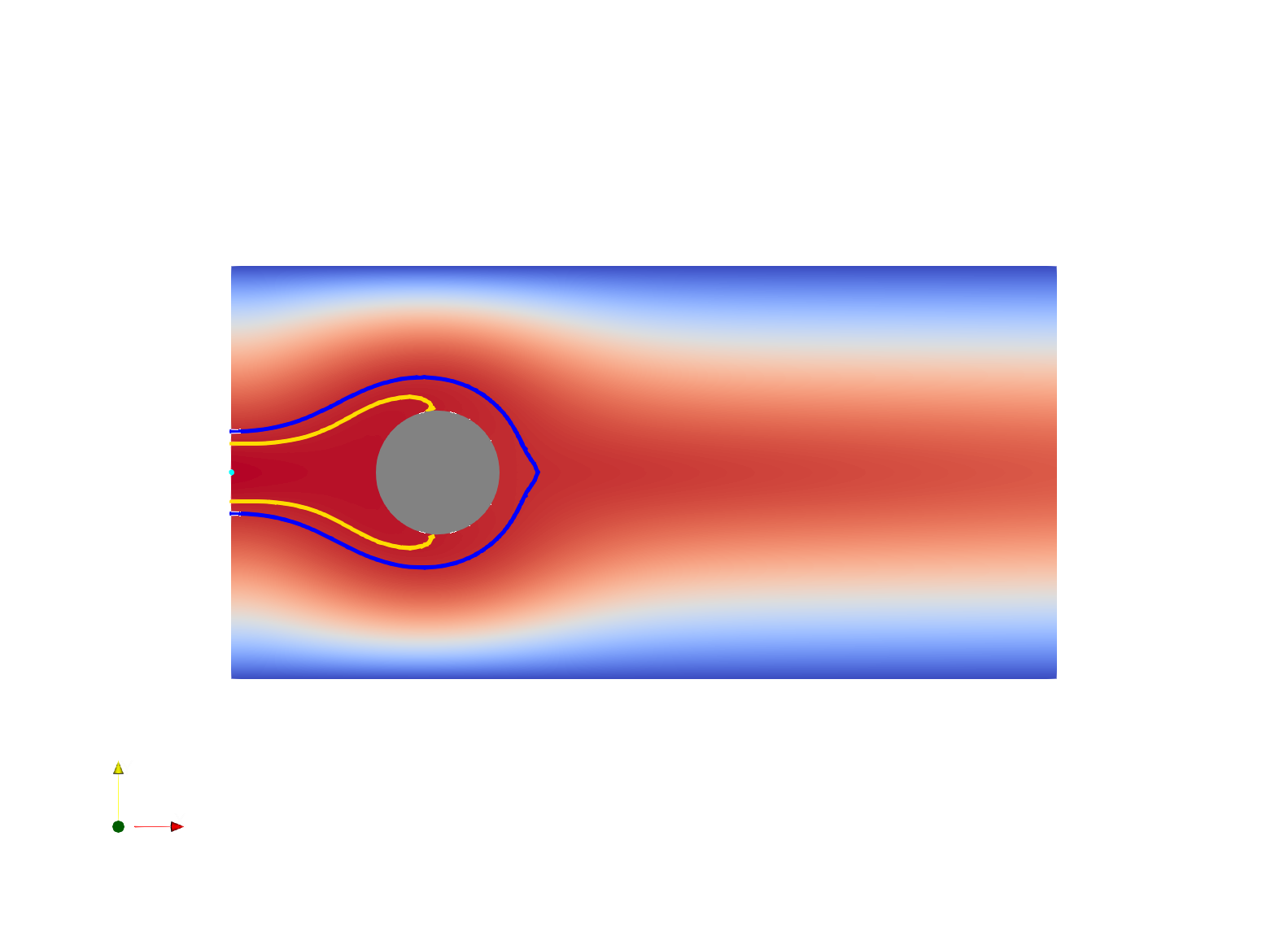}
    \includegraphics[width=0.32\textwidth, trim={7.9cm, 3cm, 7.9cm, 10cm}, clip]{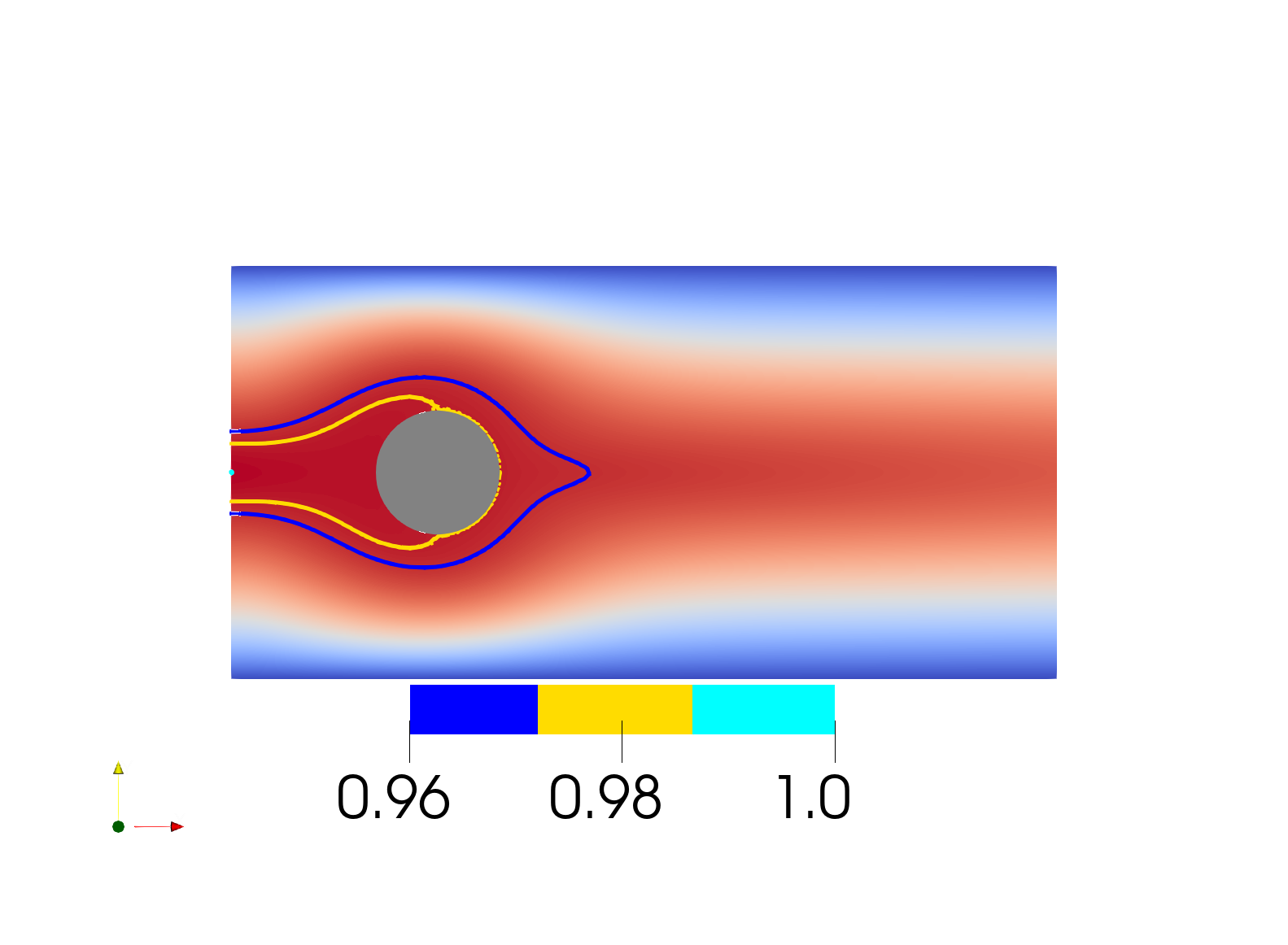}
    \includegraphics[width=0.32\textwidth, trim={7.9cm, 3cm, 7.9cm, 10cm}, clip]{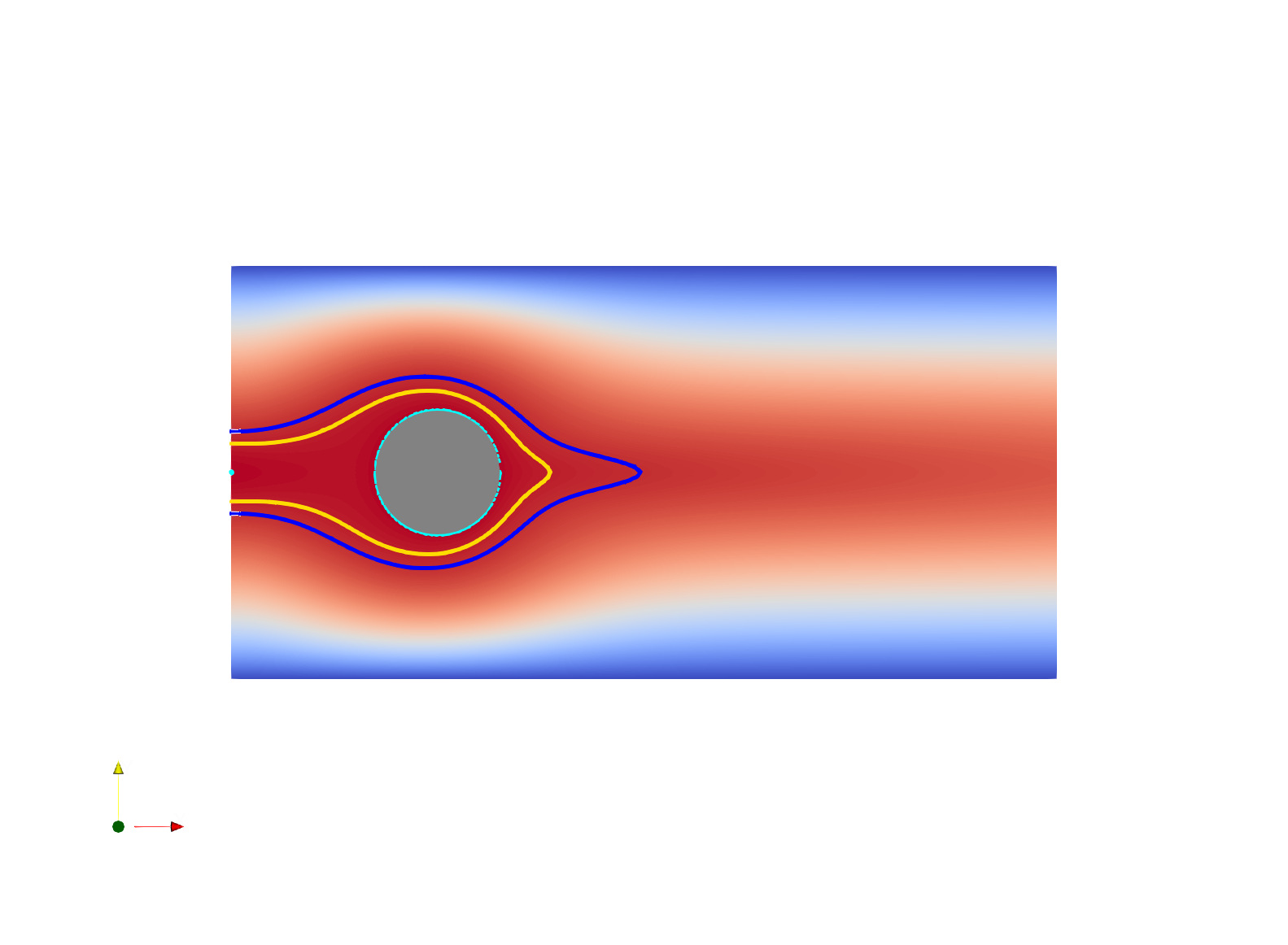}
    \caption{
        The potential $u$ and contour lines at $u=0.96,0.98$, and $1$ with various $\phi$ for the semi-permeable membrane example \Cref{sec:semi_permeable}. Depending on $\phi$, the active set is empty ($\phi=0.96$, left), a proper subset of $\Gamma_S$ ($\phi=0.98$, center), or $\Gamma_S$ ($\phi=1$, right), respectively.
        \label{fig:semiperm}
    }
\end{figure}

\subsection{Discrete Maximum Principle (DMP) for Convection-Diffusion}
\label{sec:hemker}
In this example, we consider the Hemker problem \cite{HEMKER1996277} on a punctured rectangular domain
\begin{subequations}
\begin{alignat}{2}
    -\epsilon \Delta u + \nabla\cdot (\bm{b}u) & = 0&\quad&\text{ in }\Omega=(-3,9)\times(-3,3)\backslash \overline{B_1(\bm{0})},\\
    u&=0&&\text{ on }\Gamma_L=\{(x,y):x=-3,\;-3\leq y\leq 3\},\\
    u&=1&&\text{ on }\Gamma_C=\partial B_1(\bm{0}),\\
    \frac{\partial u}{\partial n}&=0&&\text{ on }\partial\Omega\backslash(\Gamma_L\cup\Gamma_C)=:\partial\Omega\backslash \Gamma_D.
\end{alignat}
\end{subequations}
Here, $u$ is the potential, $\bm{b}=(1,0)^T$ is the flow velocity, and $\epsilon = 10^{-3}$ is the diffusion coefficient. The domain $\Omega$ is a rectangular domain with a punctured hole at $\bm{0}$ with radius 1. Dirichlet boundary conditions are imposed on the left boundary $\Gamma_L$ and the boundary of the hole $\Gamma_C$. When $\epsilon\ll 1$, the solution exhibits a strong boundary layer on the left half of $\Gamma_C$ and an interior layer along $x=-1$ or $x=1$. The maximum principle implies that $0\leq u\leq 1$, which may not be preserved at the discrete level.
In this example, we enforce the maximum principle by limiting the solution space to
$$K=\{v\in H^1(\Omega)\mid 0\leq v\leq 1,\; v|_{\Gamma_D}=0\}.$$
\Cref{fig:hemker} shows the results of the FOSPG method. While $u_h$ does not preserve the maximum principle, the nonlinear approximation $\nabla \mathcal{R}^*(\psi_h)$ and the reconstructed one $\mathcal{E}_h$ are discrete maximum principle preserving (DMP).

We note that there are several {\it nonlinear} schemes satisfying the DMP \cite{BE05,BJKR18, ABP23}, typically enforcing DMP only at nodal points \cite{BE05,ABP23}. We refer the readers to the review article \cite{barrenechea2024finite} for a thorough literature review on DMP-preserving finite element methods. Our purpose here is to demonstrate that the PG framework can be successfully implemented to offer DMP preserving solutions, see \cite[Section 2]{hybridMixedProx} for more details.
\begin{figure}[t]
    \centering
        \begin{minipage}{0.32\textwidth}
        \centering
        $u_h$\\\vspace{-1em}
        \includegraphics[width=\textwidth, trim={1cm, 8cm, 1cm, 10cm}, clip]{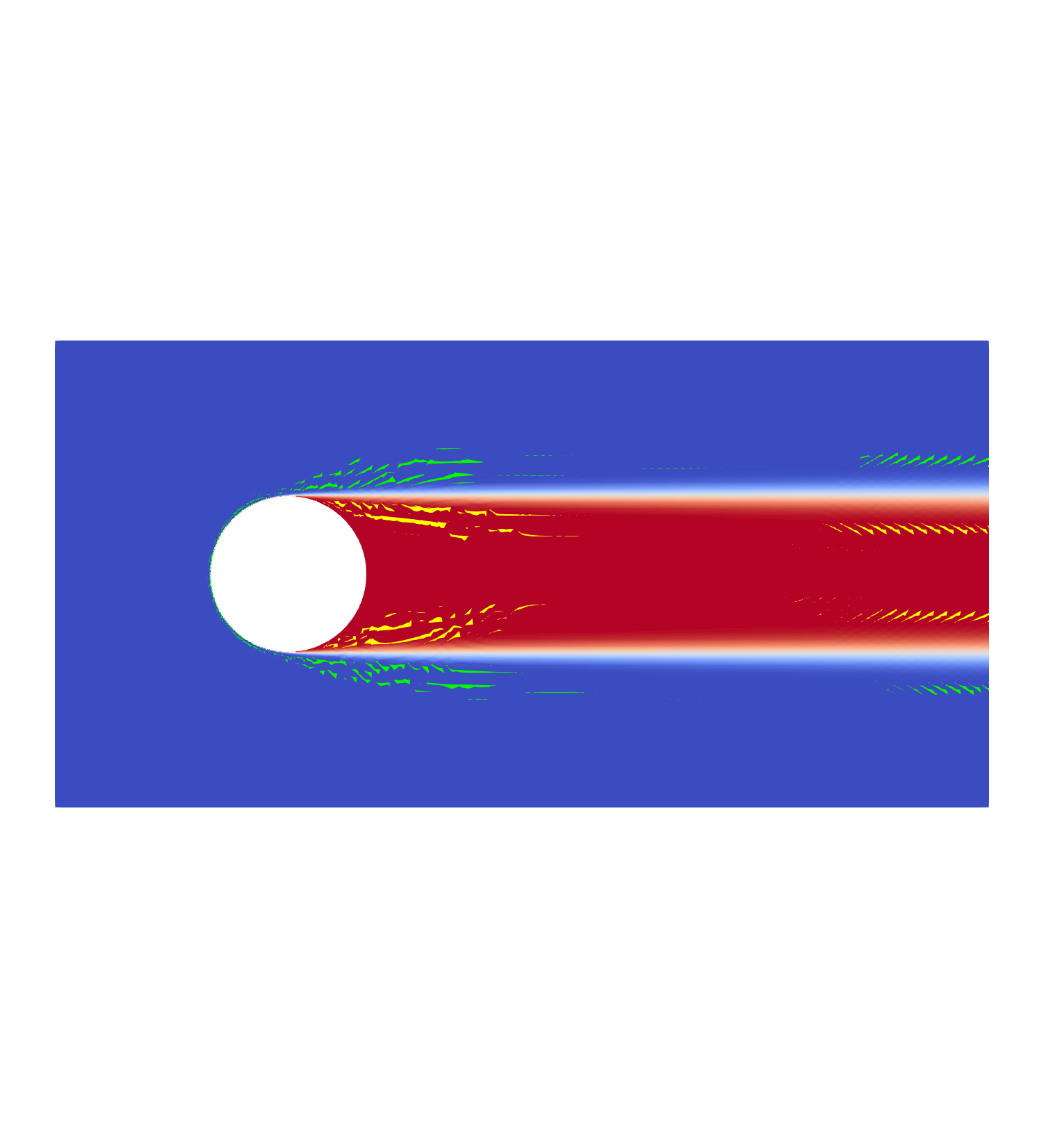}
    \end{minipage}
    \hfill
        \begin{minipage}{0.32\textwidth}
        \centering
        $\nabla \mathcal R^*(\psi_h)$\\\vspace{-1em}
        \includegraphics[width=\textwidth, trim={1cm, 8cm, 1cm, 10cm}, clip]{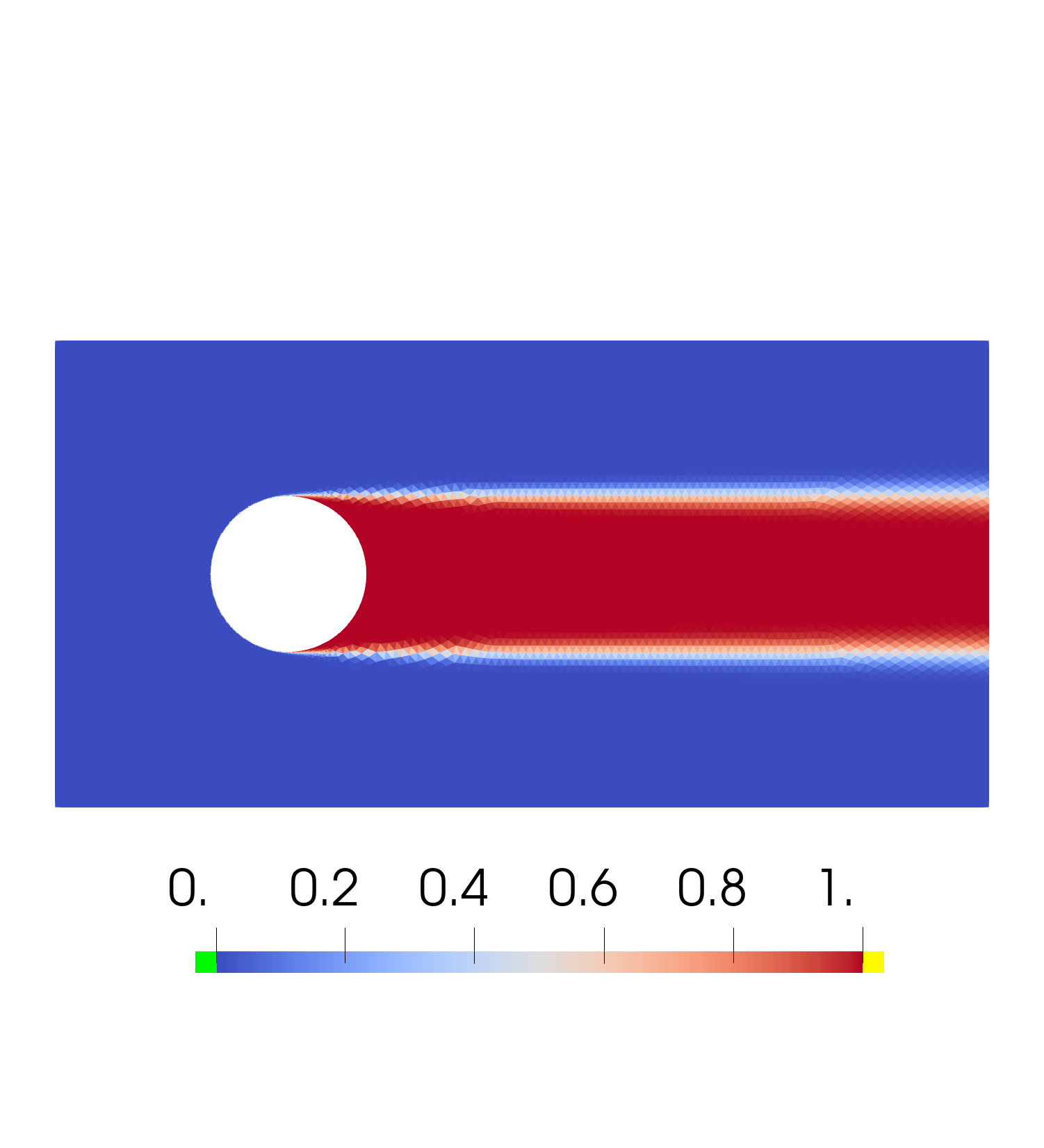}
    \end{minipage}
    \hfill
        \begin{minipage}{0.32\textwidth}
        \centering
        $\mathcal{E}_hu_h$\\\vspace{-1em}
        \includegraphics[width=\textwidth, trim={1cm, 8cm, 1cm, 10cm}, clip]{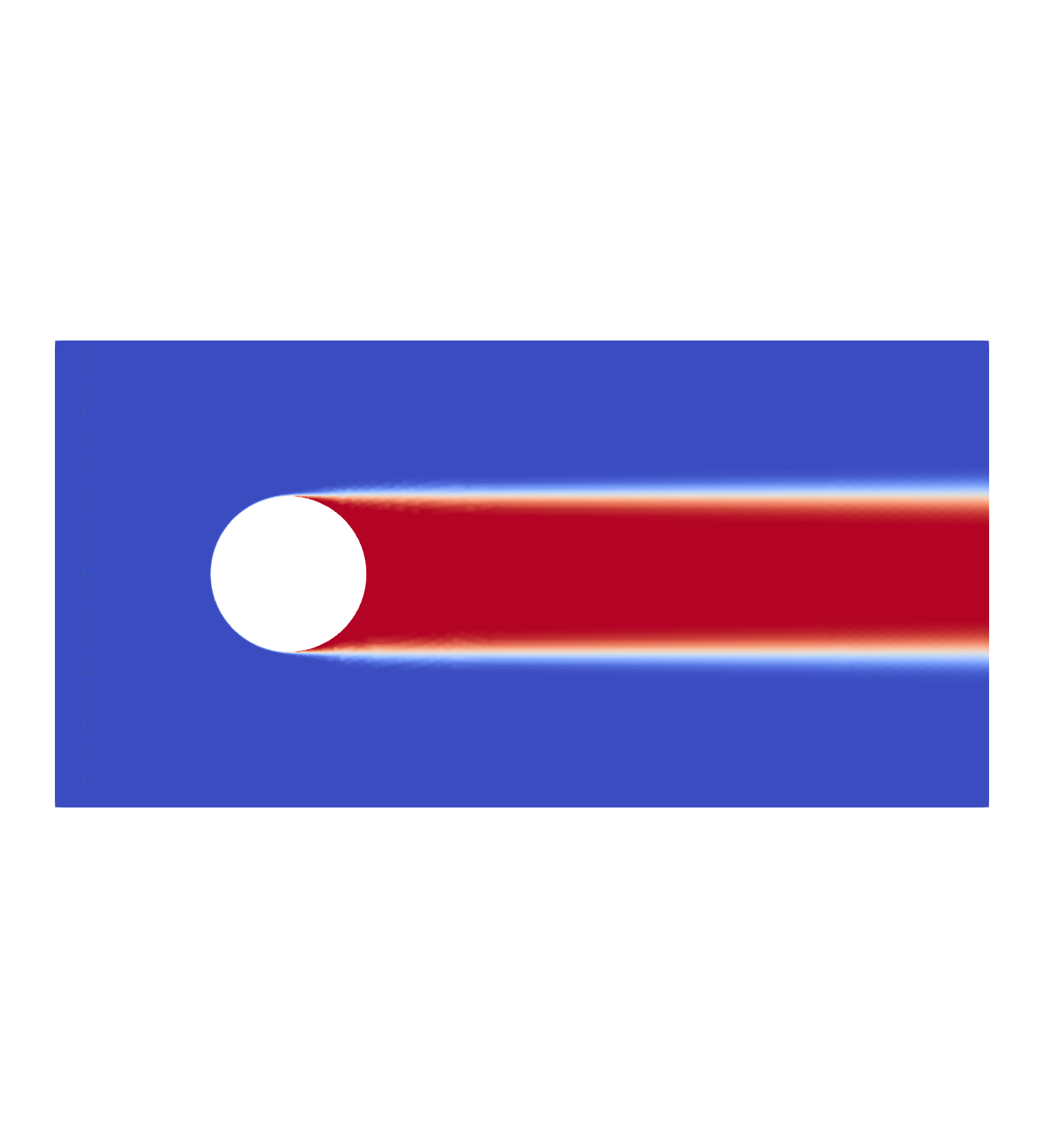}
    \end{minipage}

    \caption{Numerical results for the test case in \Cref{sec:hemker}: converged solution $u_h$ (left), latent solution $\nabla R^*(\psi_h)$ (center), and reconstructed solution $\mathcal{E}_h(u_h)$ (right). Nonphysical values ($u > 1$ in yellow, $u < 0$ in green) indicate that $u_h$ violates the maximum principle; conversely, both the latent and enriched solutions successfully preserve the discrete maximum principle within the range $(0,1)$.
    }
    \label{fig:hemker}
\end{figure}

\subsection{Dam problem with a sloping wall}
\label{sec:dam}
We consider the dam problem introduced in Example~\ref{example:dam}.
Specifically, we adopt the geometry shown in Figure~\ref{fig:dam}, with
\[
a_l = h_l = 1.0, \qquad a_r = 3.0, \qquad h_r = 0.0,
\]
which corresponds to the numerical setup described in
\cite[Chapter~III, N.1]{Baiocchi73b}.
The variational inequality~\eqref{eq:dam} is discretized using the conforming
proximal Galerkin (PG) method given in \Cref{alg:main_alg_discrete},
employing the
$(\mathbb{P}_1\text{\normalfont-bubble},\,\mathbb{P}_0\text{\normalfont-broken})$
finite element pair with the Dirichlet data $g_q$ defined in \eqref{eq:dam-bc}.

Given a discharge value $q\geq 0$, the $k$-th iteration
of the PG algorithm reads as follows:
find $(u_h^k, \psi_h^k) \in V_{h,g_q} \times W_h$ such that
\begin{subequations}
\label{dam-eqn}
\begin{align}
\alpha_k \, \mathcal{A}(u_h^k, v_h)
+ (v_h, \psi_h^k - \psi_h^{k-1})
&= \alpha_k \, F(v_h),
&& \forall v_h \in V_{h,0}, \\
(u_h^k, w_h) - (\exp(\psi_h^k), w_h)
&= 0,
&& \forall w_h \in W_h,
\end{align}
\end{subequations}
where the non-symmetric bilinear form $\mathcal{A}(\cdot,\cdot)$ is given in~\eqref{dam-a}, the linear functional $F(\cdot)$ is defined in~\eqref{dam-f}, and
\begin{equation*}
    V_{h,g}=\{v\in H^1_0(\Omega)\cap \mathbb{P}_1(\mathcal{T}_h):v=g\text{ on }\Gamma_D\}.
\end{equation*}
We set $\psi_h^0 = 0 $ and $\alpha_k = 1.2^k$.
The stopping criterion is
$
\| u_h^k - u_h^{k-1} \| \le 10^{-10}.$
We observe that the PG iteration converges uniformly in approximately
$30$ iterations for a range of mesh sizes and discharge values.
We denote the converged solution by $u_{h,q}$.

Following the procedure in \cite[Chapter~III, N.1]{Baiocchi73b}, we first solve
\eqref{dam-eqn} for the discharge values $q^{(0)} = 0.25$ and $q^{(1)} = 0.30$.
Subsequently, for $r \ge 1$, the discharge is updated using the secant method:
\[
q^{(r+1)} = q^{(r)}
- \frac{q^{(r)} - q^{(r-1)}}{f(q^{(r)}) - f(q^{(r-1)})}
\, f(q^{(r)}),
\]
where, following \cite[Equation~(3.26)]{Baiocchi73b}, we define
\[
f(q) := -h_0 \bigl( \partial_y u_{h,q}(a_l, y_l - h_0/2) + h_0/2 \bigr),
\]
with $h_0$ denoting the mesh size. The condition $f(q) = 0$ serves as a compatibility condition for determining
the discharge value $q$; see, for instance,
\cite{Baiocchi73b,OdenKikuchi80}.

The simulation results reported below are obtained on a coarse quasi-uniform mesh with mesh size
$h_0=0.05$.
The results of the outer discharge iterations are reported in
Table~\ref{tab:dam}.
We observe that the discharge iteration converges in five iterations, while
the number of inner PG iterations ranges between $23$ and $35$ for different
values of $q$.
We remark that the number of PG iterations is essentially independent of the
mesh size; this behavior is not shown here for brevity.
\begin{table}[htbp]
\centering
\caption{Inner and outer iteration convergence history for the dam problem.}
\label{tab:dam}
\begin{tabular}{|c|c|c|c|}
\hline
{outer iteration} &     $q^{(r)}$   &     $f(q^{(r)})$         & {inner iteration} \\
{$r$} &  &  & (PG) \\
\hline
0 & 0.25 & $0.269\times 10^{-1}$ & 24 \\
1 & 0.30 & $0.676\times 10^{-2}$ & 35 \\
2 & 0.2188 & $0.842\times 10^{-4}$& 23 \\
3 & 0.2178 & $0.102\times 10^{-4}$  & 23 \\
4 & 0.2177 & $0.510\times 10^{-7}$ & 23 \\
\hline
\end{tabular}
\end{table}
The converged free surface corresponding to $q^{(4)} = 0.2177$ is shown in
Figure~\ref{fig:dam_u}.
The red curve represents the numerical free surface defined by the level set
$u_{h,q^{(4)}} = 10^{-4}$.
For comparison, reference data from \cite[Table~18]{Baiocchi73b} are shown as
black markers; these data were obtained using a finite-difference variational
inequality method with mesh size $h = 0.05$.
We observe good qualitative agreement between the two solutions.

\begin{figure}
    \centering
    \includegraphics[width=0.5\linewidth]{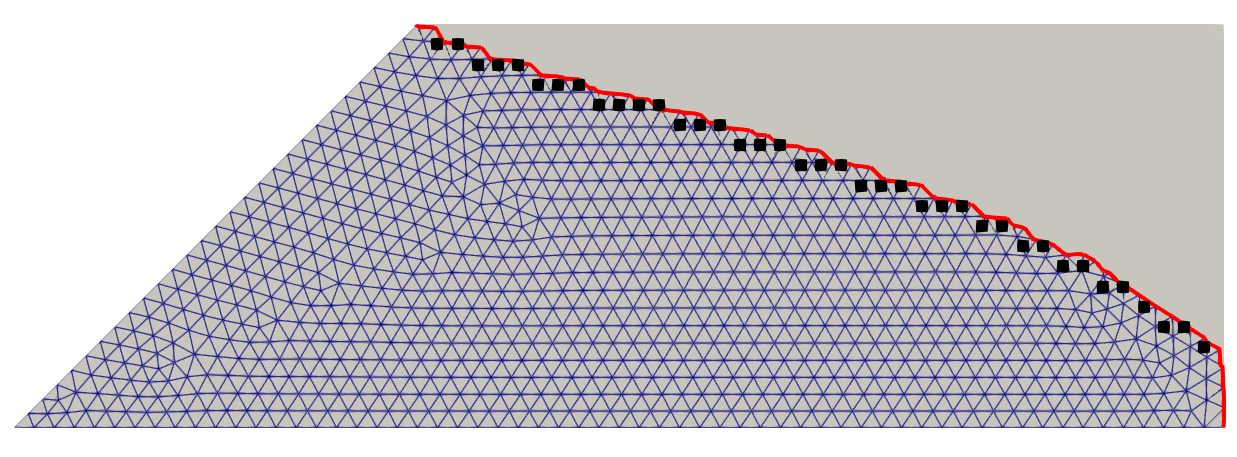}
\caption{Free surface for the dam problem.
Black markers denote the reference free surface from \cite[Table~18]{Baiocchi73b}.
The red curve corresponds to the numerical free surface defined by
$u_{h,q^{(4)}} = 10^{-4}$.
The triangulation is shown for the computed wet region $u_h\ge 10^{-4}$.}
\label{fig:dam_u}
\end{figure}

\section{Conclusion} We have extended the proximal Galerkin (PG) framework for energy principles to non-symmetric variational inequalities, presenting both a conforming formulation and a hybridized first-order system variant (FOSPG). For both methods, we established well-posedness of the iterates and proved optimal a priori error estimates. The numerical experiments demonstrated the mesh-independent convergence of the PG method on four challenging applications: American option pricing, advection--diffusion problems with semipermeable boundaries, the dam problem \cite{Baiocchi72} with sloping walls, and the enforcement of discrete maximum principles.

\bibliographystyle{plain}
\bibliography{references}

\end{document}